\newcommand{\nint}[1]{[#1]}
\DeclareMathOperator{\ext}{ext}
\DeclareMathOperator{\conv}{conv}
\DeclareMathOperator{\aff}{aff}
\newcommand{\Sol}{X}
\newcommand{\x}{x}
\newcommand{\R}{\mathbb R}
\newcommand{\A}{\mathcal A}
\DeclareMathOperator{\dir}{dir}
\DeclareMathOperator{\ri}{ri}
\title{The Geometry of Sparse Analysis Regularization\thanks{\textbf{Fundings:}
 This work was partly supported by ANR GraVa ANR-18-CE40-0005 and Projet ANER RAGA G048CVCRB-2018ZZ.}}
\author{
  Xavier Dupuis\thanks{Université de Bourgogne, Dijon, France (\email{xavier.dupuis@u-bourgogne.fr}).}
  \and
  Samuel Vaiter\thanks{CNRS \& Université C\^ote d'Azur, Nice, France (\email{samuel.vaiter@math.cnrs.fr}).}}
\date{}
\begin{document}

\maketitle

\begin{abstract}
  Analysis sparsity is a common prior in inverse problem or machine learning including special cases such as Total Variation regularization, Edge Lasso and Fused Lasso.
We study the geometry of the solution set (a polyhedron) of the analysis $\ell^1$ regularization (with $\ell^2$ data fidelity term) when it is not reduced to a singleton without any assumption of the analysis dictionary nor the degradation operator.
In contrast with most theoretical work, we do not focus on giving uniqueness and/or stability results, but rather describe a worst-case scenario where the solution set can be big in terms of dimension.
Leveraging a fine analysis of the sub-level set of the regularizer itself, we draw a connection between support of a solution and the minimal face containing it, and in particular prove that extreme points can be recovered thanks to an algebraic test.
Moreover, we draw a connection between the sign pattern of a solution and the ambient dimension of the smallest face containing it.
Finally, we show that any arbitrary sub-polyhedra of the level set can be seen as a solution set of sparse analysis regularization with explicit parameters.
\end{abstract}

\section{Introduction}
\label{sec:intro}

We focus on a convex regularization promoting sparsity in an analysis dictionary in the context of a linear inverse problem/regression problem where the regularization reads:
\begin{equation}\label{eq:regularization}
  \min_{\x \in \RR^n} \frac{1}{2} \norm{y - \Phi \x}_2^2 + \lambda \normu{D^* x} 
\end{equation}
where $y \in \RR^q$ is an observation/response vector, $\Phi \colon \RR^n \rightarrow \RR^q$ is the sensing/acquisition linear operator, $D \colon \RR^p \rightarrow \RR^n$ is a dictionary and $\lambda>0$ the hyper-parameter used as a trade-off between fidelity and regularization.
Note that at this point, we do not make any assumption on the dictionary $D$ or the acquisition operator $\Phi$.

This convex regularization is known as analysis $\lun$-regularization~\cite{Elad_2007} in the inverse problems community or generalized Lasso~\cite{tibshirani2011} in statistics.
Let us mention that it includes several popular regularizers as special cases such that (anisotropic) total variation~\cite{Rudin1992Nonlineartotalvariation} when $D$ is a discrete difference operator, wavelet coefficient analysis~\cite{steidl2004equivalence} using a wavelet transform as an analysis dictionary or fused Lasso~\cite{tibshirani2005sparsity} when using the concatenation of the identity matrix and a discrete difference operator, i.e., using a Lasso regularization with an additional constraint on the (discrete) gradient.
In the noiseless context, when $y \in \Im \Phi$, the following constrained formulation is used instead of the Tikhonov formulation~\eqref{eq:regularization} as
\begin{equation}\label{eq:regularizationNONOISE}
  \min_{\x \in \RR^n} \normu{D^* x} \qsubjq \Phi x = y .
\end{equation}
We focus here on the noisy version of the regularization in order to keep our discussion concise.
The purpose of this paper is to answer the following question:
\begin{quote}
  \emph{When the solution set of~\eqref{eq:regularization} is not reduced to a singleton, what is its ``geometry''?}
\end{quote}

One possible motivation could be to study some \emph{generalized solution path} of such a problem, not with respect to the hyper-parameter $\lambda$ (see e.g. \cite{10.1214/009053604000000067,mairal2012complexity, tibshirani2011}) but with respect to some parameter of $D$.
For example, consider 
\[
D_\rho = \begin{pmatrix}
  \rho & 0 \\ 0 & 1
\end{pmatrix}, \quad
\Phi = \begin{pmatrix}
 1 & 1
\end{pmatrix}, \quad
y=2, \quad \lambda =1 ;
\]
one can show that the solution set is
\[
\left\{ (1,0) \right\} \text{ if } \rho < 1, \quad
\left[ (1,0) , (0,1) \right] \text{ if } \rho = 1, \quad
\left\{ (0,1) \right\} \text{ if } \rho > 1.
\] 
Even if most of the time the solution set is reduced to a singleton (see below),
it is essential to describe what happens when it is not the case to understand the behavior
of the generalized solution path.
In this paper, we do not tackle fully this multivalued point of view, and leave the sensitivity analysis for future work.

\subsection{Previous works}

\paragraph{Uniqueness certificate of analysis regularization}
Among several theoretical issues, sufficient condition for uniqueness of the solution set of~\eqref{eq:regularization} have been extensively studied,
see for instance~\cite{6380620,nam2013cosparse,2018arXiv180507682A,zhang2016one,tardivel:hal-03262087}.
Several uniqueness conditions can be proposed, where the simplest is for instance requiring $n \leq q$ and $\Phi$ having full rank: the $\ell^2$-loss term is strictly convex, and uniqueness follows from it.
The task of studying the case when the solution set is not reduced to a singleton can be seen as rather formal since most of the time the solution set is reduced to a singleton~\cite{tibshirani2011,6380620}, but nevertheless, it exhibits interesting properties of sparse analysis regularization.
See \cref{sec:uniqueness} for a discussion of some of these conditions.

\paragraph{Solution set of generic convex program}
Describing the geometry of the solution set in convex optimization has been a subject of intense study starting from the work of~\cite{mangasarian1988simple} and its generalization to non-smooth convex program~\cite{burke1991characterization}. Several extensions have been proposed such as~\cite{jeyakumar1995characterizing} for pseudo-linear programs or in a different setting (minimization of concave function), \cite{mangasarian1999minimum} shows that one can describe one solution with minimal sparsity level.

\paragraph{Representer theorems}
We shall also remark that coming from the statistics community, \cite{KIMELDORF197182} (and popularized in~\cite{scholkopf2001generalized}) initiates a line of work coined as representer theorems, culminating recently in~\cite{boyer2018representer} and \cite{unser2019unifying}.
The basic idea of these kinds of results is to show that under some assumption, one can write every element of the solution set of a convex program as a sum of elementary atoms.

\paragraph{Description of polytopes}
Convex polytopes and polyhedrons are central objects in geometry~\cite{ziegler1995lectures} and convex analysis.
A part of our results provides a connection between faces and signs of vector living in the analysis domain.
We can draw a connection with the study of oriented matroid~\cite{bjorner1999oriented} and zonotopes~\cite{bolker1969class} (analysis $\ell^1$-ball are zonotopes) as described in~\cite[Lecture 7]{ziegler1995lectures}, in particular in section 7.3.

\subsection{Contributions}
In contrast to these lines of work, we take here a more direct and specific approach.
We give below an overview of our contributions.

\paragraph{Geometry of the analysis $\ell^1$-ball}
The first part of our work (\eqref{sec:unit}) is dedicated to studying the geometry of the analysis $\ell^1$-ball.
We study across several results the direction and the relative interior of the intersection between the sub-level set of the regularizer and another set. We refine our analysis progressively starting from any convex component of the level set, then looking to sub-polyhedra of the sub-level set ending by the faces itself of the level set.
We show several specific results:
\begin{itemize}
  \item The sign pattern defines a bijection between the set of exposed faces of the analysis $\ell^1$-ball and the set of feasible signs in the dictionary $D$ as proved in~\eqref{prop:inclusion-sign}.
  \item The extreme points of the analysis $\ell^1$-ball can be recovered with a purely algebraic result thanks to \eqref{cor:NSCextremality}. We draw a link between our result and a remark in~\cite{boyer2018representer} which is a topological argument
\end{itemize}

\paragraph{Geometry of the solution set}
Thanks to the study of the analysis $\ell^1$-ball, we give in a second part (\eqref{sec:struct}) consequences on the solution set of~\eqref{eq:regularization}.
We show that:
\begin{itemize}
  \item Using \eqref{lem:affine_compo} and \eqref{prop:itself}, we describe the geometry of the solution set in \eqref{prop:cons-sol-faces}. 
  \item The solution set of~\eqref{eq:regularization} admits extreme points if and only if, the condition denoted by ($H_0$) and assumed to hold all throughout~\cite{6380620} or~\cite{vaiter2013local}, namely $\Ker \Phi \cap \Ker D^* = \{0\}$, holds.
  In this case, the extreme points are precisely those which satisfy the condition denoted by ($H_J$) in \cite{vaiter2013local} at a given solution to perform a sensitivity analysis, see \eqref{prop:conn-litt}.
  \item For any affine space which intersect non-trivially the unit-sphere, one can find $\Phi$, $y$ such that the solution set is exactly this intersection, see \eqref{thm:arb} and \eqref{prop:arbitrary_sol}.
\end{itemize}

\subsection{Notations}

For a given integer $n$, the set of all integers between $1$ and $n$ is denoted by $\nint{n} = \ens{1,\dots,n}$.

\paragraph{Vectors and support}
Given $u \in \RR^m$, the support $\supp(u)$ and the sign vector $\sign(u)$ are defined by
\begin{equation*}
  \supp(u) = \enscond{i \in \nint{m}}{u_i \neq 0} \qandq \sign(u) = (\sign(u_i))_{i \in \nint{m}},
\end{equation*}
and its cardinal is coined the $\ell^0$-norm $\norm{u}_0 = \abs{\supp(u)}$.
The cosupport $\cosupp(u)$ is the set $\cosupp(u) = \nint{m} \setminus \supp(u)$.
Given $u,v \in \RR^m$, the inner product is written $\dotp{u}{v} = \sum_{i=1}^m u_i v_i$ and the associated norm is written $\norm{u}_2 = \sqrt{\dotp{u}{u}}$.
We will also use the $\ell^1$-norm $\norm{u}_1 = \sum_{i=1}^m \abs{u_i}$ and $\ell^\infty$-norm $\norm{u}_\infty = \max_{i\in\nint{m}} \abs{u_i}$.

\paragraph{Linear operators}
Given a linear operator $D \in \RR^{n \times m}$, $D^* \in \RR^{m \times n}$ is the transpose operator, $D^+ \in \RR^{n \times m}$ its Moore--Penrose pseudo-inverse, $\Ker D \subseteq \RR^m$ its null-space and $\Im D \in \RR^n$ its column-space.
Given $I \subseteq \nint{m}$, $D_I \in \RR^{n \times |I|}$ is the matrix formed by the column of $D$ indexed by $I$.
The identity operator is denoted $\Id_m$ or $\Id$.
Given a vector $u \in \RR^m$, $x_I$ is the vector of components indexed by $I$.
Given a subspace $F \subseteq \RR^m$, we denote by $\Pi_F$ the orthogonal projection on $F$.
Given a vector $u \in \RR^n$, its diagonalized matrix $\diag(u) \in \RR^{n \times n}$ is the diagonal matrix such that $\diag(u)_{ii} = u_i$ for every $i \in \nint{n}$.

\paragraph{Convex analysis}
Given a convex, lower semicontinuous, proper function $f : \RR^m \to \RR$, its sub-differential $\partial f$ is given
\begin{equation*}
  \partial f(u) = \enscond{\eta \in \RR^m}{f(u) \geq f(v) + \dotp{\eta}{u-v}} .
\end{equation*}
Given a convex set $C$, the affine hull $\aff(C)$ is the smallest affine set containing $C$, the direction $\dir(C)$ of $C$ is the direction of $\aff(C)$ and its relative interior $\ri(C)$ is the interior of $C$ relative to its affine hull $\aff(C)$.
The relative boundary $\rbd(C)$ of $C$ is the boundary of $C$ relative to $\aff(C)$.
The dimension $\dim(C)$ of $C$ is the dimension of $\aff(C)$.
We say that $x \in C$ is an extreme point if there are no two different $x_1,x_2 \in C$ such that $x = \frac{x_1+x_2}{2}$.
The set of all extreme points of $C$ is denoted by $\ext(C)$.
For instance, given two points $x_1 \neq x_2 \in \RR^n$, the segment $C = [x_1,x_2]$ is such that its affine hull is $\aff(C) = \enscond{x_1 + t x_2}{t \in \RR}$, its relative interior is the open segment $\ri(C) = (x_1,x_2)$, its relative boundary and set of extreme points $\ext(C) = \rbd(C) = \ens{x_1,x_2}$, its dimension is 1 and its direction is $\dir(C) = \RR (x_1 - x_2)$.

\subsection{Examples of operators $D^{*}$}

We illustrate our results in this paper on different analysis regularization settings. In particular, we focus our interest on different operators:
\begin{itemize}
  \item The Lasso~\cite{tibshirani1996regression}, corresponding to $D = D_{\text{Lasso}} = \Id$, used to recover sparse vectors.
  \item The Total Variation regularization~\cite{Rudin1992Nonlineartotalvariation}, and more specifically the 1D Total Variation, i.e., when $D : \RR^n \to \RR^{n-1}$ is a forward difference operator on $n$ points:
        \begin{equation*}
          D^* =
          \begin{pmatrix}
            -1 & +1 & 0 & \cdots & 0 \\
            0 & -1 & +1 & \ddots & \vdots \\
            \vdots &  \ddots & \ddots & \ddots & 0 \\
            0 & \cdots & 0 & -1 & +1 \\
          \end{pmatrix} .
        \end{equation*}
        This is a popular prior in image processing to regularize ``cartoon'' or piecewise regular images.
  \item More generally, we consider the Graph-Total Variation regularization~\cite{sharpnack2012sparsitency} where $D = D_{G}$ is the vertex-edge incidence matrix of a graph $G$.
        Specific instance include the 1D and anisotropic 2D Total Variation~\cite{Rudin1992Nonlineartotalvariation}, Cluster Lasso~\cite{she2010sparse}.
\end{itemize}


\section{The unit ball of the sparse analysis regularizer}
\label{sec:unit}

This section contains the core of our results.
After giving preliminary results on sign vectors in \cref{sec:sign-prelem}, we show that the unit ball is a convex polyhedron by giving its half-space representation in \cref{sec:unit-ball}.
Then, we study properties of convex subset of the unit-sphere in \cref{sec:convex-sphere} which lead us to \cref{lem:convex-subset} which turns to be the foundation of latter results.
\Cref{sec:subpolyhedra} contains a sequence of results which represent our main contribution: \cref{lem:affine_compo} which describes in detail the affine components of the unit-ball, \cref{prop:itself} which instantiates this result to setting of an affine component included in the unit sphere, \cref{prop:faces_of_inter} which extends this result to any exposed faces and finally \cref{prop:NSCextremality} which gives a necessary and sufficient condition of extremality.
Finally, in \cref{sec:cons-unit}, we reformulate our previous results in order to describe the exposed faces of the unit-ball, and to show that there exists a bijection between the set of exposed faces and feasible signs. We also draw a connection to the work of~\cite{boyer2018representer}.

\subsection{Preliminary results on sign vectors} \label{sec:sign-prelem}

We first define an order on the set of all possible signs $\{-1, 0,+1\}^p$ along with a notion of consistency of signs which can be related to the idea of ``sub-signs''.
\begin{definition} \label{def:sign_order}
Let $s,s' \in \{-1, 0,+1\}^p$. We say that 
\begin{itemize}
 \item $s \preceq s'$ if for all $i \in [p]$, $s_i \ne 0  \ \Rightarrow s'_i = s_i$;
 \item $s$ and $s'$ are consistent if for all $i \in [p]$, $s_i \ne 0$ and $s_i' \ne 0\ \Rightarrow s'_i = s_i$.
\end{itemize}
\end{definition}

The following remarks connect the notion of support/cosupport to this sign pattern.
\begin{remark} \label{rmk:sign_order}
\begin{enumerate}
 \item $s \preceq s' \Rightarrow \supp(s) \subset \supp(s') \Leftrightarrow  \cosupp(s') \subset \cosupp(s)$;
 \item If $s$ and $s'$ are consistent, then 
 \[s \preceq s' \Leftrightarrow   \supp(s) \subset \supp(s')  \Leftrightarrow  \cosupp(s') \subset \cosupp(s);\]
 \item If $s \preceq s''$ and $s' \preceq s''$, then $s$ and $s'$ are consistent;
 \item The set $\ens{-1, 0,+1}^p$ endowed with the order relation $\preceq$ is a poset.
\end{enumerate}
\end{remark}

The following lemma gives a characterization of the $\ell^1$-norm which will be used intensively in latter results.
\begin{lemma} \label{lem:sign&norm}
 Let $\theta \in \R^p$. Then for any $s \in \ens{-1, 0,+1}^p$, $ \langle s,\theta \rangle \le \| \theta \|_1$,
 and the equality holds if and only if $\sign(\theta) \preceq s$.
\end{lemma}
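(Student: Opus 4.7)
The plan is a direct component-wise computation, with no real obstacle beyond correctly tracking the equality cases.

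First I would write $\langle s,\theta\rangle = \sum_{i=1}^p s_i\theta_i$ and bound each summand individually. Since $s_i \in \{-1,0,+1\}$, one has $s_i\theta_i \le |s_i|\,|\theta_i| \le |\theta_i|$, and summing over $i$ yields $\langle s,\theta\rangle \le \sum_i |\theta_i| = \|\theta\|_1$, which gives the inequality.

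Next I would address the equality case by analyzing, for each index $i$, when $s_i\theta_i = |\theta_i|$. There are three disjoint situations: if $\theta_i = 0$, the equality holds for any value of $s_i$; if $\theta_i > 0$, the equality $s_i\theta_i = \theta_i$ forces $s_i = 1 = \sign(\theta_i)$; if $\theta_i < 0$, the equality $s_i\theta_i = -\theta_i$ forces $s_i = -1 = \sign(\theta_i)$. So equality $\langle s,\theta\rangle = \|\theta\|_1$ holds if and only if, for every $i$ with $\theta_i \ne 0$, one has $s_i = \sign(\theta_i)$.

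Finally I would observe that this last condition is precisely the definition of $\sign(\theta) \preceq s$ given in \cref{def:sign_order}: for all $i \in [p]$, if $\sign(\theta)_i \ne 0$ then $s_i = \sign(\theta)_i$. This concludes the equivalence. The argument is elementary enough that the write-up can be a single short paragraph invoking the inequality $s_i\theta_i \le |\theta_i|$ componentwise and unpacking the equality condition via \cref{def:sign_order}.
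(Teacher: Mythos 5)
Your proposal is correct and follows essentially the same component-wise argument as the paper: bound each summand $s_i\theta_i \le |\theta_i|$, note that equality of the sum forces equality in each term, and unpack the per-coordinate equality condition into the definition of $\sign(\theta) \preceq s$. No gaps.
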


\begin{proof}
We prove the result component-wisely.
Let $\alpha \in \R$. Then for any $s \in \ens{-1, 0,+1}$, $s \alpha \le |\alpha|$. 
Suppose now that $\sign(\alpha) \preceq s$. If $\alpha = 0$, then $s\alpha = |\alpha|$, and if $\alpha \ne 0$,
then $s = \sign(\alpha)$ and $s \alpha = |\alpha|$.
Conversely, suppose that $\sign(\alpha) \npreceq s$. Then $\alpha \ne 0$ and $s \ne \sign(\alpha)$, i.e. $s = 0$ or $s = -\sign(\alpha)$.
In both cases, $s \alpha < |\alpha|$.
\end{proof}

\subsection{Half-space representation of the unit ball} \label{sec:unit-ball}

We denote by $B_1$ (resp. $\partial B_1$) the unit ball (resp. the unit sphere), or sub-level set (resp. level set) for the value $1$,
of the sparse analysis regularizer $R \colon x \mapsto \| D^* x \|_1$:
\begin{align*}
 B_1 & = \{ x \in \R^n : \| D^*x \|_1 \le 1 \} ,\\
 \partial B_1 & = \{ x \in \R^n : \| D^*x \|_1 = 1 \}.
\end{align*}
Since $R$ is one-homogeneous, the results of this section apply to all sub-level sets for positive values.

\begin{proposition}\label{prop:unit-ball}
The unit ball $B_1$ is a full-dimensional convex polyhedron, a half-space representation of which is given by
  \begin{equation*}
    B_1 = 
    \bigcap_{s \in \{-1,0,1\}^p} \{ x \in \R^n : \langle D s , x \rangle \le 1 \} .
  \end{equation*}
\end{proposition}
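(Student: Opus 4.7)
The plan is to leverage \cref{lem:sign&norm} to rewrite the $\ell^1$-norm $\|D^*x\|_1$ as a finite maximum of linear forms in $x$, which will immediately yield both the half-space representation and the polyhedrality.

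More precisely, fix $x \in \R^n$ and apply \cref{lem:sign&norm} to $\theta = D^*x$. The lemma gives $\langle s, D^*x\rangle \le \|D^*x\|_1$ for every $s \in \{-1,0,+1\}^p$, with equality as soon as $\sign(D^*x) \preceq s$. Taking in particular $s = \sign(D^*x) \in \{-1,0,+1\}^p$ (which trivially satisfies $\sign(D^*x) \preceq s$) realizes the equality. Using the adjoint relation $\langle s, D^*x\rangle = \langle Ds, x\rangle$, I obtain
\begin{equation*}
  \|D^*x\|_1 \;=\; \max_{s \in \{-1,0,+1\}^p} \langle Ds, x\rangle .
\end{equation*}
Consequently, $x \in B_1$ if and only if $\langle Ds, x\rangle \le 1$ for every $s \in \{-1,0,+1\}^p$, which is exactly the claimed half-space description. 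Since the intersection is over the \emph{finite} set $\{-1,0,+1\}^p$, this exhibits $B_1$ as a convex polyhedron.

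It remains to check that $B_1$ is full-dimensional, i.e.\ has non-empty interior in $\R^n$. The map $x \mapsto \|D^*x\|_1$ is continuous (as the composition of the continuous seminorm $\|\cdot\|_1$ with the linear map $D^*$) and vanishes at $0$; hence the strict sub-level set $\{x \in \R^n : \|D^*x\|_1 < 1\}$ is an open neighborhood of the origin contained in $B_1$, so $\dim B_1 = n$.

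There is essentially no obstacle: the only substantive ingredient is the variational expression for the $\ell^1$-norm already packaged in \cref{lem:sign&norm}, and full-dimensionality follows from continuity of the regularizer at $0$. The mild subtlety worth spelling out is that the index set $\{-1,0,+1\}^p$ is finite, which is what turns the intersection of half-spaces into a (finitely generated) polyhedron rather than merely a closed convex set.
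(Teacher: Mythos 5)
Your proof is correct and follows essentially the same route as the paper's: both directions of the set equality rest on \cref{lem:sign&norm} (with the choice $s=\sign(D^*x)$ for the converse inclusion), and full-dimensionality comes from the open strict sub-level set containing the origin. Writing $\|D^*x\|_1$ as a finite maximum of the linear forms $\langle Ds,\cdot\rangle$ is just a compact repackaging of the paper's two inequalities, so there is nothing substantive to add.
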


\begin{proof}
  First note that $B_1$ has a nonempty interior (in particular $0 \in B_1$), namely $\{ x \in \R^n : \| D^*x \|_1 < 1 \}$, which is equivalent for a convex set to be of full dimension.
  
  Second denote $A = \bigcap_{s \in \ens{-1,0,1}^p} \enscond{x}{\dotp{D s}{x} \leq 1}$.
  Let $x \in B_1$. By \cref{lem:sign&norm}, $\langle D s, x \rangle = \langle s, D^* x \rangle \le \| D^* x \|_1 \le 1$
  for any $s \in \ens{-1,0,1}^p$ so $x \in A$.
  Conversely, let $x \in A$ and $s = \sign(D^*x)$. By \cref{lem:sign&norm}, $\| D^* x \|_1 = \langle s, D^* x \rangle = \langle D s, x \rangle \le 1$
  so $x \in B_1$. Then $B_1 = A$ is a convex polyhedron.
\end{proof}

Note that this half-space representation is redundant, and if $D = \Id$, then it is the $\ell^{1}$-ball.
The general question of the minimal representation of $H$-polyhedron is known to be hard, we shall leave it to future work.
However, we can use this proposition to derive a way to construct exposed face of $B_1$ as claimed in the following lemma.
\begin{lemma} \label{lem:supporting-hyperplane}
 Let $\bar s \in \{-1,0,1\}^p$. Then
 \[
  B_1 \cap \{ x \in \R^n : \langle D\bar s , x \rangle = 1 \} =
  \partial B_1 \cap \{ x \in \R^n : \sign(D^*x) \preceq \bar s \} ;
 \]
  it is either empty or an exposed face of $B_1$.
\end{lemma}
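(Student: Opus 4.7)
The plan is to establish the two-set equality by double inclusion, leaning essentially on one tool, \cref{lem:sign&norm}, and then to read off the exposed-face conclusion directly from the half-space representation given in \cref{prop:unit-ball}.

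For the forward inclusion, I would start from $x \in B_1$ satisfying $\langle D\bar s, x \rangle = 1$ and rewrite the pairing as $\langle \bar s, D^* x \rangle = 1$. Combined with $\|D^*x\|_1 \leq 1$, \cref{lem:sign&norm} yields the sandwich $1 = \langle \bar s, D^*x\rangle \leq \|D^*x\|_1 \leq 1$. The outer equality forces $\|D^*x\|_1 = 1$, hence $x \in \partial B_1$; the inner equality triggers the equality clause of \cref{lem:sign&norm}, giving $\sign(D^*x) \preceq \bar s$.

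For the reverse inclusion, given $x \in \partial B_1$ with $\sign(D^*x) \preceq \bar s$, the equality clause of \cref{lem:sign&norm} immediately returns $\langle \bar s, D^*x\rangle = \|D^*x\|_1 = 1$, i.e.\ the hyperplane equation $\langle D\bar s, x\rangle = 1$; containment in $B_1$ is automatic since $\partial B_1 \subseteq B_1$.

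For the second claim, \cref{prop:unit-ball} exhibits $\langle D\bar s , x\rangle \leq 1$ as one of the linear inequalities in the half-space description of $B_1$, so the affine hyperplane $\{x : \langle D\bar s, x\rangle = 1\}$ is a supporting hyperplane of $B_1$. By definition of an exposed face, its intersection with $B_1$ is either empty or an exposed face of $B_1$. I do not foresee any real obstacle here: the only subtle point is the equality characterization inside \cref{lem:sign&norm}, and that has already been established component-wise.
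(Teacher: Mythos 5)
Your proof is correct and follows essentially the same route as the paper's: both directions of the set equality come from the sandwich $1 = \langle \bar s, D^*x\rangle \le \|D^*x\|_1 \le 1$ and the equality case of \cref{lem:sign&norm}, and the exposed-face claim is read off from the supporting hyperplane supplied by \cref{prop:unit-ball}. No gaps.
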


\begin{proof}
 Let $x \in B_1$. Then by \cref{lem:sign&norm}, $\langle D\bar s , x \rangle = 1$
 if and only if $1 = \langle \bar s ,D^* x \rangle  \le \| D^* x \|_1 \le 1$,
 if and only if $\| D^* x \|_1 = 1$ and $\sign(D^*x) \preceq \bar s$.
 If the intersection is nonempty, then $\{ x \in \R^n : \langle D\bar s , x \rangle = 1 \}$ is a supporting hyperplane of $B_1$
 by \cref{prop:unit-ball}, and thus its intersection with $B_1$ is an exposed face of the polyhedron.
\end{proof}

For a given $\bar s$, the set $\{ x \in \R^n : \sign(D^*x) \preceq \bar s \}$ looks hard to describe.
In fact, there exists a linear representation as told in the following lemma.
\begin{lemma}\label{lem:linear_sign}
	Let $\bar s \in \{-1,0,1\}^p$. Then
	\[
		\{ x \in \R^n : \sign(D^*x) \preceq \bar s \}
		=
		\{ x \in \RR^n : D_{\bar J}^* x = 0 \text{ and } \diag(\bar s_{\bar I}) D_{\bar I}^* x \geq 0 \}
	\]
	where $\bar J = \cosupp(\bar s)$ and $\bar I = \supp(\bar s)$.
\end{lemma}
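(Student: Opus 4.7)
The plan is to unpack the definition of $\preceq$ componentwise and split on whether $\bar s_i$ is zero, exactly mirroring the split between $\bar J$ and $\bar I$ in the target representation. This is essentially bookkeeping rather than a deep argument, so the main task is to be careful with the implication direction hidden in the definition of $\preceq$.

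More concretely, I would proceed as follows. Fix $x \in \R^n$. By \cref{def:sign_order}, the condition $\sign(D^* x) \preceq \bar s$ rewrites as: for every $i \in [p]$, if $(D^* x)_i \neq 0$ then $\bar s_i = \sign\bigl((D^* x)_i\bigr)$. I would then treat the two types of coordinates separately.

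For $i \in \bar J$, we have $\bar s_i = 0$, so the implication in the previous display forces $(D^* x)_i = 0$ (since otherwise it would require $\bar s_i = \pm 1$). Conversely, $(D^* x)_i = 0$ trivially satisfies the implication. Hence the contribution of indices in $\bar J$ is exactly the linear condition $D_{\bar J}^* x = 0$. For $i \in \bar I$, we have $\bar s_i \in \{-1,+1\}$; the implication ``$(D^*x)_i \neq 0 \Rightarrow \bar s_i = \sign((D^*x)_i)$'' is then equivalent to the single inequality $\bar s_i (D^*x)_i \geq 0$, since the case $(D^*x)_i = 0$ satisfies this inequality, and when $(D^*x)_i \neq 0$, the product $\bar s_i (D^*x)_i$ equals $|(D^*x)_i|$ or $-|(D^*x)_i|$ according to whether $\bar s_i$ matches $\sign((D^*x)_i)$ or not. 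Packaging these $|\bar I|$ inequalities together gives $\diag(\bar s_{\bar I}) D_{\bar I}^* x \geq 0$.

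Combining the two cases yields both inclusions between the sets displayed in the lemma. The main obstacle, if any, is purely notational: keeping the distinction between the coordinate condition $\sign((D^*x)_i) = 0$ and the vector condition $D_{\bar J}^* x = 0$ straight, and remembering that on $\bar I$ the factor $\bar s_i \in \{\pm 1\}$ turns a sign-matching constraint into a one-sided linear inequality.
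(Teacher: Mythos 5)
Your proof is correct and follows essentially the same route as the paper's, which simply records the componentwise rewriting $\sign(D^*x) \preceq \bar s \Leftrightarrow D_{\bar J}^* x = 0 \text{ and } \bar s_i (D^*x)_i \geq 0$ for $i \in \bar I$ as ``straightforward.'' You merely spell out the case split on $\bar s_i = 0$ versus $\bar s_i = \pm 1$ in more detail, with no difference in substance.
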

\begin{proof}
	It is a straightforward rewriting of $\sign(D^*x) \preceq \bar s$.
	Indeed,
	\begin{align*}
		\sign(D^*x) \preceq \bar s 
		& \Leftrightarrow D_{\bar J}^* x = 0 \text{ and } \bar s_{i} (D^* x)_i \geq 0, \forall i \in \bar{I} \\
		& \Leftrightarrow D_{\bar J}^* x = 0 \text{ and } \diag(\bar s_{\bar I}) D_{\bar I}^* x \geq 0 .
	\end{align*}
\end{proof}
Note that we can exchange the role of $\bar s$ and $D^* x$, and we also obtain that
\[
	\{ x \in \R^n : \sign(D^*x) \preceq \bar s \}
	=
	\{ x \in \RR^n : D_{\bar J}^* x = 0 \text{ and } \diag(D_{\bar I}^* x) \bar s_I  \geq 0 \}.
\]

\subsection{Convex components of the unit sphere} \label{sec:convex-sphere}

In this section we consider nonempty convex subsets $C\subset \partial B_1$.
All the results will hold in particular for exposed faces of $B_1$.

We begin with a lemma on general convex sets.

\begin{lemma} \label{lem:general-convex-subset}
 Let $X$ be a nonempty convex set and $C \subset X$ be a nonempty convex subset. 
 Suppose that there exists an exposed face $G$ of $X$ such that $\ri(C) \cap G \ne \emptyset$.
 Then $C \subset G$.
 Moreover, if $G$ is exposed in $\aff(X)$, then $G \subset \rbd(X)$.
 \end{lemma}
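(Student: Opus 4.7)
The plan is to prove the two claims separately, each by a short argument built on a single supporting hyperplane.

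For the inclusion $C \subset G$, I would fix a witness $x_0 \in \ri(C) \cap G$ and invoke the definition of an exposed face of $X$: there exist a linear form $\ell$ and $\alpha \in \R$ with $\ell \leq \alpha$ on $X$ and $G = \{ x \in X : \ell(x) = \alpha \}$, so that $\ell(x_0) = \alpha$. For an arbitrary $x \in C$, the standard characterization of the relative interior furnishes some $t > 0$ such that $(1+t) x_0 - t x \in C \subset X$. Applying $\ell$ and rearranging yields $(1+t) \alpha - t \ell(x) \leq \alpha$, hence $\ell(x) \geq \alpha$; combined with $\ell(x) \leq \alpha$ this forces $\ell(x) = \alpha$, i.e.\ $x \in G$.

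For the moreover part, the hypothesis that $G$ is exposed in $\aff(X)$ supplies an affine hyperplane $H$ of $\aff(X)$ (of codimension one in $\aff(X)$) such that $G = X \cap H$ and $X$ is contained in one of the two closed half-spaces of $\aff(X)$ bounded by $H$. I would then argue by contradiction that $\ri(X) \cap H = \emptyset$: if $z \in \ri(X) \cap H$, then a relative neighborhood of $z$ in $\aff(X)$ would be contained in $X$ while straddling $H$, contradicting the one-sided containment. Consequently $G = X \cap H \subset X \setminus \ri(X) \subset \rbd(X)$.

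Neither step presents a genuine obstacle; both amount to unwinding the definitions of exposed face, relative interior and relative boundary. The only point that needs care is the reading of ``exposed in $\aff(X)$'' in the second claim: this means the supporting hyperplane is a \emph{proper} affine subspace of $\aff(X)$, which is exactly what prevents it from meeting $\ri(X)$ and thus forces $G$ into the relative boundary.
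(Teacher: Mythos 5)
Your proof is correct and follows essentially the same route as the paper: for the first claim, your point $(1+t)x_0 - tx$ is exactly the paper's perturbation $\bar x + \varepsilon(\bar x - x)$ of a relative-interior point past the witness, just run directly instead of by contradiction. For the ``moreover'' part, the paper simply cites the classical fact that a proper exposed face lies in the boundary and then relativizes to $\aff(X)$, whereas you prove that fact directly; your argument is sound and self-contained.
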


 \begin{proof}
  Recall that an exposed face of $X$ is defined as $G =X \cap \{ x : \langle  \alpha , x \rangle = \beta \}$
  with $\{ x : \langle  \alpha , x \rangle = \beta \}$ a supporting hyperplane of $X$, i.e. such that
  $X \subset \{ x : \langle  \alpha , x \rangle \le \beta \}$.
  Suppose that $C \not \subset G$ and let $x \in C \setminus G \subset X$ and $\bar x \in \ri(C) \cap G$ (nonempty).
  Then $\langle \alpha , x \rangle < \beta$ and $\langle \alpha , \bar x \rangle = \beta$.
  Let $d = \bar x -x \in \dir(C)$. Since $\bar x \in \ri(C)$,
 $\bar x + \varepsilon d \in C \subset X$ for $|\varepsilon|$ small.
 But $\langle \alpha , \bar x + \varepsilon d \rangle = \beta + \varepsilon (\beta - \langle \alpha , x \rangle ) > \beta$
 for $\varepsilon > 0$, which is a contradiction. Then $C \subset G$.
 
 It is a classical result that $G \subset \bd(X)$, see e.g. \cite[Part III, Section 2.4]{hiriart2013convex}.
 If $G$ is exposed in $\aff(X)$, we get that $G \subset \rbd(X)$ by considering $\aff(X)$ as the ambient space. 
 \end{proof}

The following lemma is the first result of a long number of consequences which study the direction and relative interior of the intersection of the unit ball with another set.
\begin{lemma} \label{lem:convex-subset}
 Let $\bar x \in \partial B_1$, $\bar s = \sign(D^* \bar x)$, $\bar J = \cosupp(D^*\bar x)$, and
 \[
 \bar F  = B_1 \cap \{ x \in \R^n : \langle D\bar s , x \rangle = 1 \} .
 \]
 \begin{enumerate}[(i)]
  \item $\bar F = \partial B_1 \cap \{ x \in \R^n : \sign(D^*x) \preceq \bar s \}$ and it is an exposed face;
  \item $C \subset \bar F$ for any nonempty convex subset $C \subset B_1$ such that $\bar x \in \ri(C)$;
  \item $ \dir(\bar F) =   (D \bar s)^\bot \cap  \Ker D^*_{\bar J}$;
  \item $\ri(C) \subset \ri(\bar F)$ for any nonempty convex subset $C \subset B_1$ such that $\bar x \in \ri(C)$;
  \item $\ri(\bar F) =\partial B_1 \cap \{ x \in \R^n : \sign(D^*x) = \bar s \}$.
 \end{enumerate}
\end{lemma}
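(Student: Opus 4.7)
The plan is to handle the parts in the order (i), (ii), (iii), (v), (iv). Parts (i) and (ii) will follow from the preceding lemmas essentially by reading off. For (i), \cref{lem:supporting-hyperplane} applied to $\bar{s}$ gives the stated equality, and $\bar{x}$ itself witnesses nonemptiness since $\sign(D^{*}\bar{x}) = \bar{s} \preceq \bar{s}$; the same lemma then ensures that $\bar{F}$ is an exposed face. For (ii), I would apply \cref{lem:general-convex-subset} with $X = B_{1}$ and $G = \bar{F}$, noting that $\bar{x} \in \ri(C) \cap \bar{F}$ by hypothesis, which immediately yields $C \subset \bar{F}$.

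For (iii), I would establish both inclusions. The inclusion $\dir(\bar{F}) \subset (D\bar{s})^{\perp} \cap \Ker D^{*}_{\bar{J}}$ is immediate because $\dir(\bar{F})$ is spanned by differences $x_{1}-x_{2}$ with $x_{1}, x_{2} \in \bar{F}$, and each such difference satisfies $\langle D\bar{s}, x_{1}-x_{2}\rangle = 0$ and, by the alternate description from (i), $D^{*}_{\bar{J}}(x_{1}-x_{2}) = 0$. The reverse inclusion requires a perturbation argument: for $d \in (D\bar{s})^{\perp} \cap \Ker D^{*}_{\bar{J}}$, I verify that $\bar{x} + \varepsilon d \in \bar{F}$ for $|\varepsilon|$ small enough. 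The hyperplane equation $\langle D\bar{s}, \bar{x}+\varepsilon d\rangle = 1$ is preserved, the identities $(D^{*}(\bar{x}+\varepsilon d))_{i} = 0$ for $i \in \bar{J}$ persist, and for $i \in \bar{I} = \supp(\bar{s})$ the coordinate $(D^{*}\bar{x})_{i}$ is nonzero with sign $\bar{s}_{i}$, so this sign is preserved by continuity. Hence $\sign(D^{*}(\bar{x}+\varepsilon d)) \preceq \bar{s}$, so (i) places $\bar{x}+\varepsilon d$ in $\bar{F}$ and $d$ is a scalar multiple of a difference of elements of $\bar{F}$.

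For (v), the inclusion $\supset$ comes from rerunning the perturbation argument of (iii) starting from any $x \in \partial B_{1}$ with $\sign(D^{*}x) = \bar{s}$: the same continuity-of-sign argument shows $x + \varepsilon d \in \bar{F}$ for every $d \in \dir(\bar{F})$ and $|\varepsilon|$ small, so $x \in \ri(\bar{F})$. For the reverse, I suppose $x \in \ri(\bar{F})$ with $\sign(D^{*}x) \neq \bar{s}$; combined with $\sign(D^{*}x) \preceq \bar{s}$ from (i), this forces some $i_{0} \in \bar{I}$ with $(D^{*}x)_{i_{0}} = 0$. Then the linear functional $y \mapsto \bar{s}_{i_{0}}(D^{*}y)_{i_{0}}$ is nonnegative on $\bar{F}$, vanishes at $x$, yet is strictly positive at $\bar{x}$; since a linear functional attaining its minimum at a relative-interior point of a convex set must be constant on that set, this is a contradiction. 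Part (iv) then follows from the same minimality argument applied inside $C$, using $\bar{x} \in \ri(C)$ and $C \subset \bar{F}$ from (ii), before appealing to (v) to conclude $x \in \ri(\bar{F})$. The main technical obstacle is this linear-functional minimality argument, which is what upgrades the inequality $\sign(D^{*}x) \preceq \bar{s}$ into the equality $\sign(D^{*}x) = \bar{s}$ at relative-interior points.
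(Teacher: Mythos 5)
Your proposal is correct. Parts (i), (ii) and (iii) follow the paper's proof essentially verbatim: (i) is read off from \cref{lem:supporting-hyperplane}, (ii) from \cref{lem:general-convex-subset}, and (iii) combines the trivial forward inclusions with the same perturbation argument $\bar x + \varepsilon d \in \bar F$ (your justification that $\bar x+\varepsilon d\in B_1$ is slightly implicit, but the needed identity $\|D^*(\bar x+\varepsilon d)\|_1=\langle \bar s, D^*(\bar x+\varepsilon d)\rangle=\langle D\bar s,\bar x+\varepsilon d\rangle=1$ via \cref{lem:sign&norm} is exactly what your sign computation provides). Where you genuinely diverge is in (iv)--(v). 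The paper proves the inclusion $\ri(C)\subset\{x:\sign(D^*x)=\bar s\}$ by a second, reversed application of (ii): taking $\hat x\in\ri(C)$ with sign $\hat s$ and the face $\hat F$ it defines, one gets $\bar F\subset\hat F$, hence $\bar s\preceq\hat s$ and so $\hat s=\bar s$; both (iv) and the missing inclusion of (v) then come out of the single computation by specializing $C=\bar F$. You instead isolate a coordinate $i_0\in\supp(\bar s)$ where the sign drops and observe that the linear functional $y\mapsto \bar s_{i_0}(D^*y)_{i_0}$ is nonnegative on $\bar F$ (resp.\ on $C$), positive at $\bar x$, and would attain its minimum $0$ at a relative-interior point, forcing it to vanish identically --- a contradiction. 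This is a perfectly valid and arguably more self-contained route: it replaces the somewhat delicate re-application of (ii) (which implicitly needs $\hat x\in\ri(\bar F)$, resp.\ an appeal to \cref{lem:general-convex-subset}) by an elementary convexity fact about linear functionals minimized on relative interiors. What it costs is a small case analysis on coordinates; what the paper's version buys is that the whole of (iv)--(v) drops out of one uniform argument that is reused later (e.g.\ in \cref{prop:unique-face_C}). Either way, no step is missing.
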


\begin{proof}
 (i) The expression for $\bar F$ is given by \cref{lem:supporting-hyperplane}.
 It follows that $\bar x \in \bar F$ and thus $\bar F$ is an exposed face of $B_1$.

 (ii) Let $C$ be a nonempty convex subset of $B_1$ such that $\bar x \in \ri(C)$.
 Then $\ri(C) \cap \bar F \ne \emptyset$, and by \cref{lem:general-convex-subset}, $C \subset \bar F$.

(iii) The inclusion $\dir(\bar F) \subset (D \bar s)^\bot$ follows from the definition of $\bar F$. 
Moreover, for any $x \in \bar F$, $\sign(D^*x)\preceq \bar s$, which implies that $\bar J \subset \cosupp(D^*x)$, i.e. $D^*_{\bar J}x = 0$.
Then $ \bar F\subset \Ker D^*_{\bar J}$, and $\dir(\bar F) \subset \Ker D^*_{\bar J}$.
Conversely, let $d \in (D \bar s)^\bot \cap  \Ker D^*_{\bar J}$.
Since $d \in \Ker D^*_{\bar J}$ and $\sign(D^*\bar x) = \bar s$, $\cosupp(D^*\bar x) \subset \cosupp(D^*d)$ and
$\sign(D^*(\bar x + \varepsilon d) ) \preceq \bar s$ for $|\varepsilon|$ small.
Then by \cref{lem:sign&norm}, $\| \bar x + \varepsilon d \|_1 = \langle \bar s, D^*(\bar x + \varepsilon d ) \rangle
= \langle D \bar s, \bar x + \varepsilon d \rangle = \langle D \bar s, \bar x  \rangle = 1$
since $d \in (D \bar s)^\bot$. Then $\bar x + \varepsilon d \in \bar F$ for $|\varepsilon|$ small, and $d \in \dir(\bar F)$.

(iv)-(v) First, we prove that $\partial B_1 \cap \{ x \in \R^n : \sign(D^*x) = \bar s \} \subset \ri(\bar F)$
(thus in particular $\bar x \in \ri(\bar F)$).
Let $x \in \partial B_1$ be such that $\sign(D^* x) = \bar s$.
By~(iii) and its proof, for any $d \in \dir(\bar F)$, $x + \varepsilon d \in \bar F$ for $|\varepsilon|$ small. Thus $x \in \ri(\bar F)$.
Second, let $C$ be a nonempty convex subset of $B_1$ such that $\bar x \in \ri(C)$.
By~(ii), $C \subset \bar F \subset \partial B_1$.
Let $\hat x \in \ri(C)$ and $\hat s = \sign(D^* \hat x)$. 
Note that $\hat s \preceq \bar s$ since $\hat x \in \bar F$.
Let $\hat F = B_1 \cap \{ x \in \R^n : \langle D\hat s , x \rangle = 1 \}$.
Applying~(ii) to $\hat x$ and $C = \bar F$, we get that $\bar F \subset \hat F$,
which implies that $\bar s \preceq \hat s$. Thus $\sign(D^* \hat x) = \bar s$
and $\ri(C) \subset \partial B_1 \cap \{ x \in \R^n : \sign(D^*x) = \bar s \}$.
This proves (iv) as well as the missing inclusion of (v) by setting $C = \bar F$ (recall that $\bar x \in \ri(\bar F)$).

\end{proof}

The following proposition is a direct consequence of \cref{lem:convex-subset} which allows to characterize faces of $B_1$ by an arbitrary convex subset of it.
\begin{proposition} \label{prop:unique-face_C}
Let $C$ be a nonempty convex subset of $\partial B_1$.
Let $\bar x \in \ri(C)$, $\bar s= \sign(D^*\bar x)$, and
 \[
  \bar F = B_1 \cap \{ x \in \R^n : \langle D\bar s , x \rangle = 1 \} .
 \]
Then $C \subset \bar F$ and $\ri(C) \subset \ri(\bar F)$.
 Moreover, $\bar F$ is the smallest face of $B_1$ such that $\ri(C) \cap \bar F \ne \emptyset$
 and the unique face of $B_1$ such that $\ri(C) \cap \ri(\bar F) \ne \emptyset$.
\end{proposition}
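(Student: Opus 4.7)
The plan is to obtain the two inclusions essentially for free from \cref{lem:convex-subset}, and then extract the minimality and uniqueness properties from the standard fact that two faces of a convex set whose relative interiors overlap must coincide.

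First, since $C$ is a nonempty convex subset of $B_1$ (as $\partial B_1 \subset B_1$) with $\bar x \in \ri(C)$ and $\bar s = \sign(D^*\bar x)$, parts~(i), (ii) and~(iv) of \cref{lem:convex-subset} immediately give that $\bar F$ is an exposed face of $B_1$, that $C \subset \bar F$, and that $\ri(C) \subset \ri(\bar F)$. In particular $\bar x \in \ri(\bar F)$ (this can also be read off from part~(v)).

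For the minimality claim, I would let $G$ be any face of $B_1$ such that $\ri(C) \cap G \ne \emptyset$. By \cref{prop:unit-ball}, $B_1$ is a polyhedron, so every one of its faces is exposed and \cref{lem:general-convex-subset} applies with $X = B_1$, yielding $C \subset G$; in particular $\bar x \in G$. Using that $\bar x \in \ri(\bar F)$, for each $y \in \bar F$ one can find $z \in \bar F$ and $t \in (0,1)$ with $\bar x = t y + (1-t) z$, and the defining property of the face $G$ then forces both $y$ and $z$ to lie in $G$. Hence $\bar F \subset G$, so $\bar F$ is contained in every face intersecting $\ri(C)$.

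For uniqueness, suppose $G$ is a face of $B_1$ with $\ri(C) \cap \ri(G) \ne \emptyset$. The minimality just established gives $\bar F \subset G$, and by $\ri(C) \subset \ri(\bar F)$ any witness point belongs to $\ri(\bar F) \cap \ri(G)$. Symmetrising the argument of the previous paragraph (now applying the face-defining property of $\bar F$ to points of $G$) gives $G \subset \bar F$, so $G = \bar F$.

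The only step that requires some care is the passage from ``$\bar x \in G$'' to ``$\bar F \subset G$'' in the minimality argument: merely knowing that $\bar x$ is in the face $G$ is not sufficient, and one must exploit that $\bar x \in \ri(\bar F)$ in order to trap arbitrary points of $\bar F$ on segments in $B_1$ passing through $\bar x$. Beyond this, the proof is routine bookkeeping on top of \cref{lem:convex-subset} and \cref{lem:general-convex-subset}.
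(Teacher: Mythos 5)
Your proof is correct and follows the same overall architecture as the paper's: both derive $C \subset \bar F$ and $\ri(C) \subset \ri(\bar F)$ from \cref{lem:convex-subset}, and both obtain $C \subset G$ for any face $G$ meeting $\ri(C)$ via \cref{lem:general-convex-subset}. Where you diverge is in upgrading ``$\bar x \in G$'' to ``$\bar F \subset G$'': the paper observes that $\ri(\bar F) \cap G \ne \emptyset$ and applies \cref{lem:general-convex-subset} a second time (with $\bar F$ playing the role of the convex subset and $G$ the exposed face), whereas you argue directly from the extreme-face definition, trapping an arbitrary $y \in \bar F$ on a segment of $B_1$ through $\bar x \in \ri(\bar F)$. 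Your route is marginally more self-contained and has the advantage of not requiring $G$ to be exposed at that step; the paper's reuses its lemma and keeps the argument uniform. One small imprecision on your side: the assertion that \emph{every} face of the polyhedron $B_1$ is exposed fails for the improper face $G = B_1$ itself (a full-dimensional polyhedron is never the intersection with a supporting hyperplane), so \cref{lem:general-convex-subset} does not literally apply there; the paper dispatches this case separately, and in your write-up it is harmless only because $\bar F \subset B_1$ is trivial. Your uniqueness argument by symmetrising is the same ``permutation'' step as in the paper and is fine.
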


\begin{proof}
By \cref{lem:convex-subset}~(i) and (iii), $C \subset \bar F$ and $\ri(C) \subset \ri(\bar F)$.
Let $F$ be a face such that $\ri(C)\cap F \ne \emptyset$. 
If $F = B_1$, then $\bar F \subset F$; otherwise $F$ is an exposed face since $B_1$ is a polyhedron,
and by \cref{lem:general-convex-subset}, $C\subset F$. 
It follows that $ \emptyset \ne \ri(C)\cap \ri(\bar F) \subset F \cap \ri(\bar F)$, and by \cref{lem:general-convex-subset} again,
$\bar F \subset F$.
Suppose now that $\ri(C) \cap \ri(F) \ne \emptyset$.
Then permuting $F$ and $\bar F$, we get that $F \subset \bar F$, thus $F = \bar F$.
\end{proof}

\begin{remark}
The uniqueness actually holds with the same proof for a general nonempty convex set $X$: given a nonempty convex subset 
$C \subset \rbd(X)$, there exists at most one exposed face $G$ of $X$ such that $\ri(C) \cap \ri(G) \ne \emptyset$.
The existence reduces to the existence, for any $x \in \rbd(X)$, of an exposed face $G$ of $X$ such that $x \in \ri(G)$.
\end{remark}

For a singleton $C = \{ \bar x \}$, the previous proposition becomes the following.

\begin{corollary} \label{cor:singleton}
	Let $\bar x \in \partial B_1$ and $\bar s = \sign(D^* \bar x)$. Then 
$ \bar F = B_1 \cap \{ x \in \R^n : \langle D\bar s , x \rangle = 1 \} $
	is the smallest face of $B_1$ such that $\bar x \in \bar F$ 
	and the unique face of $B_1$ such that $\bar x \in \ri(\bar F)$.
\end{corollary}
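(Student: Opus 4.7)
The plan is to derive this corollary as a direct specialization of \cref{prop:unique-face_C} to the case where $C$ is the singleton $\{\bar x\}$.

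First I would observe that $C = \{\bar x\}$ is a nonempty convex subset of $\partial B_1$ by hypothesis, and that its relative interior coincides with itself, $\ri(C) = \{\bar x\}$, since the affine hull of a singleton is the point itself. With this choice of $C$, the distinguished point $\bar x$ in the statement of \cref{prop:unique-face_C} is the same as the $\bar x$ here, and $\bar s = \sign(D^* \bar x)$ agrees in both statements. Therefore the face $\bar F = B_1 \cap \{x \in \R^n : \langle D \bar s, x \rangle = 1\}$ produced by \cref{prop:unique-face_C} is exactly the one in the corollary.

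Next I would translate the two conclusions of \cref{prop:unique-face_C} through the identification $\ri(C) = \{\bar x\}$. The condition ``$\ri(C) \cap F \ne \emptyset$'' becomes ``$\bar x \in F$'', so the statement that $\bar F$ is the smallest face of $B_1$ meeting $\ri(C)$ becomes precisely ``$\bar F$ is the smallest face of $B_1$ containing $\bar x$''. Likewise, ``$\ri(C) \cap \ri(F) \ne \emptyset$'' becomes ``$\bar x \in \ri(F)$'', so uniqueness of the face whose relative interior meets $\ri(C)$ translates to uniqueness of the face whose relative interior contains $\bar x$.

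There is essentially no technical obstacle: the corollary is a pure unpacking of definitions once one recognizes that a singleton is its own relative interior. The only point worth remarking on is that the proposition's inclusion $\ri(C) \subset \ri(\bar F)$ here reduces to $\bar x \in \ri(\bar F)$, which confirms that the claimed unique face indeed satisfies the required property, so that existence (and not only uniqueness) is genuinely obtained from the proposition.
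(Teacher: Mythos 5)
Your proposal is correct and is exactly the paper's route: the paper states this corollary without a separate proof, introducing it as what \cref{prop:unique-face_C} ``becomes'' for the singleton $C = \{\bar x\}$, which is precisely the specialization you carry out (using $\ri(\{\bar x\}) = \{\bar x\}$ to translate the two conclusions).
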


We can also derive from
\cref{prop:unique-face_C} and \cref{lem:convex-subset}
the following properties about the mapping $x \mapsto \sign(D^*x)$.

\begin{corollary} \label{cor:sign_map}
Let $C$ be a nonempty convex subset of the unit-sphere $\partial B_1$.
Then $\max_{x \in C}\sign(D^*x)$ is well-defined
and this maximum is attained everywhere in $\ri(C)$. In particular $\sign(D^*\cdot)$ is constant on $\ri(C)$.
\end{corollary}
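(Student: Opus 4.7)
The plan is to extract everything from \cref{prop:unique-face_C} together with the two characterizations of $\bar F$ coming from \cref{lem:convex-subset}, namely part (i) (faces as $\preceq$-level sets) and part (v) (relative interiors as $=$-level sets).

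First, pick any $\bar x \in \ri(C)$ (which is nonempty since $C$ is a nonempty convex set in finite dimension) and set $\bar s = \sign(D^* \bar x)$. Applying \cref{prop:unique-face_C} to $C$ and $\bar x$ yields the exposed face $\bar F = B_1 \cap \{x : \langle D\bar s, x\rangle = 1\}$ with $C \subset \bar F$ and $\ri(C) \subset \ri(\bar F)$.

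Next, for any $x \in C$, the inclusion $C \subset \bar F$ combined with the rewriting $\bar F = \partial B_1 \cap \{x : \sign(D^*x) \preceq \bar s\}$ from \cref{lem:convex-subset}(i) gives $\sign(D^* x) \preceq \bar s$. Thus $\bar s$ is an upper bound for $\sign(D^*\cdot)$ on $C$ with respect to the poset order of \cref{rmk:sign_order}. Since $\bar s = \sign(D^* \bar x)$ with $\bar x \in C$, this upper bound is attained, so $\max_{x \in C} \sign(D^* x) = \bar s$ is well-defined.

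Finally, for the statement about $\ri(C)$, I combine $\ri(C) \subset \ri(\bar F)$ with the equality $\ri(\bar F) = \partial B_1 \cap \{x : \sign(D^*x) = \bar s\}$ from \cref{lem:convex-subset}(v), which immediately gives $\sign(D^* x) = \bar s$ for every $x \in \ri(C)$. This simultaneously shows that the maximum is attained everywhere on $\ri(C)$ and that $\sign(D^*\cdot)$ is constant on $\ri(C)$. There is no real obstacle here: everything is already encoded in the preceding lemma and proposition, and the corollary is essentially a dictionary translation from ``smallest face containing $\bar x$'' to ``maximal feasible sign vector on $C$''.
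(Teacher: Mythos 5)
Your proof is correct and follows exactly the route the paper intends: the paper gives no separate proof for this corollary, merely noting that it is derived from \cref{prop:unique-face_C} and \cref{lem:convex-subset}, and your argument — taking $\bar x \in \ri(C)$, $\bar s = \sign(D^*\bar x)$, and reading off $C \subset \bar F$ and $\ri(C) \subset \ri(\bar F)$ through the sign characterizations of $\bar F$ and $\ri(\bar F)$ — is precisely that derivation.
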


For any $x,x'\in C$, $[x,x']$ is a nonempty convex subset of $\partial B_1$
and thus $\sign(D^*\cdot)$ is constant on $]x,x'[$.
Moreover, since $s = \sign(D^*x)$ and $s'= \sign(D^*x')$ are both $\preceq \max_{x \in C}\sign(D^*x)$,
they are consistent (see \cref{rmk:sign_order}). It follows that the constant value $s''$ of $\sign(D^*\cdot)$ on $]x,x'[$
can be given explicitly:
\[
s''_i = \begin{cases}
	s_i & \text{if } s_i \ne 0, \\
	s'_i & \text{if } s'_i \ne 0, \\
	0 & \text{otherwise}.
	\end{cases} 
\]
It is also the maximum of $\sign(D^*\cdot)$ over $[x,x']$.

Finally we get a general sufficient condition of extremality.

\begin{corollary} \label{cor:SCextremality}
	Let $C$ be a nonempty convex subset of $\partial B_1$. 
	Let $\bar x \in C$ and $\bar s= \sign(D^*\bar x)$.
	If $\bar x$ is the unique $x \in C$ such that $\sign(D^*x)\preceq \bar s$,
	then $\bar x \in \ext(C)$.
\end{corollary}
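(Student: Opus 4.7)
The plan is to argue by contradiction: assume $\bar x \notin \ext(C)$ and produce another element of $C$ whose sign is $\preceq \bar s$. By the definition of extreme point recalled in the introduction, non-extremality gives two distinct $x_1, x_2 \in C$ with $\bar x = \frac{1}{2}(x_1+x_2)$. The segment $[x_1,x_2]$ is then a nonempty convex subset of $C \subseteq \partial B_1$ to which I can apply the results of \cref{sec:convex-sphere}.

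Next, I would invoke \cref{cor:sign_map} on the convex set $[x_1,x_2]$: the function $\sign(D^*\cdot)$ is constant on the open segment $(x_1,x_2)$, and this constant value is the maximum of $\sign(D^*\cdot)$ over $[x_1,x_2]$ in the order $\preceq$. Since $\bar x$ is the midpoint, $\bar x \in \ri([x_1,x_2])=(x_1,x_2)$, so this maximum is precisely $\bar s = \sign(D^*\bar x)$. In particular both endpoints satisfy $\sign(D^*x_1) \preceq \bar s$ and $\sign(D^*x_2) \preceq \bar s$.

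The uniqueness hypothesis on $\bar x$ now forces $x_1 = \bar x = x_2$, contradicting $x_1 \neq x_2$. Hence $\bar x$ must be extreme in $C$.

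There is essentially no obstacle: the work has already been done in \cref{lem:convex-subset} and its consequences, so this corollary is a short contrapositive reading of \cref{cor:sign_map}. The only small care point is to make sure to apply \cref{cor:sign_map} to the segment $[x_1,x_2]$ rather than to $C$ itself, since only the segment is guaranteed to contain $\bar x$ in its relative interior; otherwise one might only conclude $\sign(D^*x_i) \preceq \max_{x \in C}\sign(D^*x)$, which is weaker than what the hypothesis requires.
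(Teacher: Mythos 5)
Your proof is correct and follows essentially the same route as the paper: both reduce to the segment $[x_1,x_2]$ through a putative midpoint decomposition of $\bar x$ and invoke \cref{cor:sign_map} there before applying the uniqueness hypothesis. The only cosmetic difference is that you apply uniqueness to the endpoints (via the maximality part of \cref{cor:sign_map}), whereas the paper applies it to the interior points of the open segment; both are valid.
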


\begin{proof}
	Let $x_1,x_2 \in C$ such $\bar x = \frac{x_1 +x_2}{2}$.
	Then $[x_1, x_2]$ is a nonempty convex subset of $\partial B_1$.
	By \cref{cor:sign_map}, $\sign(D^*x) = \bar s$ for any $ x \in ]x_1,x_2[$.
	By uniqueness of $\bar x$, $]x_1,x_2[ = \{\bar x \}$, i.e. $x_1 = x_2 = \bar x$ and thus $\bar x$ is an extreme point.
\end{proof}
Observe that the uniqueness condition in the \cref{cor:SCextremality} can be written as $C \cap \bar F = \{\bar x \}$
with $\bar F = B_1 \cap \{ x \in \R^n : \langle D\bar s , x \rangle = 1 \} $
the smallest face of $B_1$ containing $\bar x$.

\subsection{Sub-polyhedra of the unit ball} \label{sec:subpolyhedra}

In this section we consider nonempty convex polyhedra of the form $\A \cap B_1$
with $\A$ an affine subspace.
The results on convex components of the unit sphere apply to such sets if $\A \cap B_1 \subset \partial B_1$,
and in any case, as we will see, to exposed faces of such polyhedra.
Again, the results of this section will hold in particular for exposed faces of $B_1$.

We begin with a useful lemma.

\begin{lemma} \label{lem:ri-inter}
Let $\A$ be an affine subspace and $C$ be a nonempty convex set such that $\A \cap \ri(C) \ne \emptyset$.
Then
\[
 \ri(\mathcal A \cap C) = \A \cap \ri(C) \text{ and } \dir(\mathcal A \cap C) = \dir(\A) \cap \dir(C).
\]
\end{lemma}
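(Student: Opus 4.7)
The plan is to establish the two identities sequentially: first the relative interior equality, then deduce the direction equality via a short affine-hull computation.

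For the forward inclusion $\A \cap \ri(C) \subset \ri(\A \cap C)$, I take $x \in \A \cap \ri(C)$ and apply the line segment characterization of the relative interior. For any $y \in \A \cap C$, since $x \in \ri(C)$ there exists $\lambda > 1$ such that $z = (1-\lambda) y + \lambda x \in C$; but $z$ is an affine combination of two points of $\A$, hence $z \in \A \cap C$, proving $x \in \ri(\A \cap C)$. For the reverse inclusion, I fix $x_0 \in \A \cap \ri(C)$ (nonempty by hypothesis), which by the previous step already lies in $\ri(\A \cap C)$. Given any $x \in \ri(\A \cap C)$, I extend the segment $[x_0,x]$ slightly beyond $x$ while staying in $\A \cap C$, obtaining $y \in \A \cap C \subset C$ with $x$ strictly between $x_0 \in \ri(C)$ and $y \in C$; the line segment principle then yields $x \in \ri(C)$, and $x \in \A$ is immediate.

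For the direction identity, the natural route is through affine hulls: I claim $\aff(\A \cap C) = \A \cap \aff(C)$. The inclusion $\subset$ is immediate since $\A \cap \aff(C)$ is an affine set containing $\A \cap C$. For $\supset$, I use that $\A \cap \ri(C)$ is nonempty: picking $x_0$ in it, a small relative ball of $\aff(C)$ around $x_0$ lies in $C$, so its intersection with $\A$ is a relatively open neighborhood of $x_0$ in the affine subspace $\A \cap \aff(C)$ and sits inside $\A \cap C$. This forces $\aff(\A \cap C)$ to equal $\A \cap \aff(C)$. Taking directions, and using the standard formula $\dir(\A_1 \cap \A_2) = \dir(\A_1) \cap \dir(\A_2)$ valid for affine subspaces with nonempty intersection, I conclude $\dir(\A \cap C) = \dir(\A) \cap \dir(C)$.

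The delicate step is the line segment extension in the reverse inclusion: one must verify that the prolonged point $y$ genuinely belongs to $\A \cap C$ and not merely to $C$. This is however automatic: $x \in \ri(\A \cap C)$ means that such a prolongation exists within $\A \cap C$ itself, and then the inclusion $\A \cap C \subset C$ is enough to invoke the line segment principle on $C$. Everything else consists of routine applications of basic convex-analytic facts.
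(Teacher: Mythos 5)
Your proof is correct, but it travels a different road than the paper on both halves. For the relative-interior identity, the paper simply invokes the general fact that $\ri(C_1\cap C_2)=\ri(C_1)\cap\ri(C_2)$ whenever $\ri(C_1)\cap\ri(C_2)\ne\emptyset$ (citing Hiriart-Urruty--Lemar\'echal), applied with $C_1=\A$ and $\ri(\A)=\A$; you instead reprove this special case from scratch via the line-segment characterization of $\ri$, which is self-contained and uses correctly that the prolonged point stays in $\A$ because it is an affine combination of points of $\A$. For the direction identity, the paper argues directly: it picks $\bar x\in\ri(\A\cap C)$, perturbs by $\pm\varepsilon d$, and reads off both inclusions from membership of $\bar x+\varepsilon d$ in $\A$ and in $C$; you instead establish the stronger intermediate fact $\aff(\A\cap C)=\A\cap\aff(C)$ (using that a relatively open neighborhood of $x_0$ in $\A\cap\aff(C)$ sits inside $\A\cap C$ and spans that affine set) and then intersect directions of affine subspaces. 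Your route buys an explicit description of the affine hull, which is slightly more information than the lemma asks for; the paper's perturbation argument is shorter and avoids the affine-hull computation. Both are sound; the only point worth flagging is the degenerate case $x_0=x$ in your reverse inclusion, where ``strictly between'' is vacuous but the conclusion $x\in\ri(C)$ is immediate.
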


\begin{proof}
 Since $\ri(\A) = \A$, we have $\ri(\A) \cap \ri(C) \ne \emptyset$.
 Then by \cite[Part III, Proposition~2.1.10]{hiriart2013convex}, $\ri(\mathcal A \cap C) = \ri(\A) \cap \ri(C) = \A \cap \ri(C)$.
 Let us now prove that $ \dir(\mathcal A \cap C) = \dir(\mathcal A) \cap \dir(C)$.
 Let $d \in  \dir(\mathcal A \cap C)$ and $\bar x \in \ri(\mathcal A \cap C)$ (nonempty).
 Then $\bar x + \varepsilon d \in \mathcal A \cap C$ for $|\varepsilon|$ small, and $d \in \dir(\mathcal A) \cap \dir(C)$.
 Similarly, let $d \in \dir(\mathcal A) \cap \dir(F)$ and $\bar x \in \ri(\mathcal A) \cap \ri(C)$ (nonempty).
 Then $\bar x + \varepsilon d \in \mathcal A \cap C$ for $|\varepsilon|$ small, and $d \in \dir(\mathcal A \cap C)$.
\end{proof}

The following lemma will be used in \cref{lem:affine_compo}.
\begin{lemma} \label{lem:dirA}
			Let $\A$ be an affine subspace such that $\emptyset \ne \A \cap B_1 \subset \partial B_1$.
			Let $\bar x \in \A \cap \partial B_1$, $\bar s= \sign(D^* \bar x)$, and $\bar J = \cosupp(D^*\bar x)$. Then
			\begin{enumerate}[(i)]
				\item $\dir(\A ) \subset \bigcup_{s \succeq \bar s}  \left\{ d \in \R^n : \langle Ds, d \rangle \ge 0 \right\}$;
				\item $\dir(\A)^\bot \cap \left( \sum_{s \succeq \bar s} \R_+ Ds \right) \ne \{0\}$;
				\item $\dir(\A) \cap \Ker D^*_{\bar J} \subset (D \bar s)^\bot$,
				 i.e. $ \dir(\mathcal A) \cap (D \bar s)^\bot \cap \Ker D^*_{\bar J} =\dir(\mathcal A) \cap \Ker D^*_{\bar J}$.
				\item $\mathcal A  \cap \{ x \in \R^n : \sign(D^*x) \preceq \bar s \} \subset \partial B_1$,
				i.e. $\mathcal A \cap \partial B_1 \cap \{ x \in \R^n : \sign(D^*x) \preceq \bar s \}
				= \mathcal A \cap \{ x \in \R^n : \sign(D^*x) \preceq \bar s \}$;
				\item $\mathcal A  \cap \{ x \in \R^n : \sign(D^*x) = \bar s \} \subset \partial B_1$,
				i.e. $\mathcal A \cap \partial B_1 \cap \{ x \in \R^n : \sign(D^*x) = \bar s \}
				= \mathcal A \cap \{ x \in \R^n : \sign(D^*x) = \bar s \}$.
			\end{enumerate}
		\end{lemma}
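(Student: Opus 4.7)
The plan is to prove $(i)$--$(v)$ in order, observing that $(v)$ is the immediate specialization of $(iv)$ (via $\sign(D^*x) = \bar s \Rightarrow \sign(D^*x) \preceq \bar s$) and that the ``i.e.'' reformulations in $(iii)$--$(v)$ are just rewritings of the main inclusions. First I will record a preliminary observation used throughout: since $\mathrm{int}(B_1) \cap \partial B_1 = \emptyset$, the hypothesis $\A \cap B_1 \subset \partial B_1$ forces $\A \cap \mathrm{int}(B_1) = \emptyset$, and since $\mathrm{int}(B_1) = \{x : \|D^*x\|_1 < 1\}$ (nonempty by \cref{prop:unit-ball}), this means $\|D^*y\|_1 \geq 1$ for every $y \in \A$.

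For $(i)$ and $(iii)$ I will use a common linearization argument. Given $d \in \dir(\A)$ and $t > 0$ sufficiently small, the sign vector $s := \sign(D^*(\bar x + td))$ stabilizes: indices $i$ with $\bar s_i \ne 0$ retain the sign $\bar s_i$ for small perturbations, so $s \succeq \bar s$. \Cref{lem:sign&norm} then gives both $\langle s, D^*\bar x\rangle = \|D^*\bar x\|_1 = 1$ and $\langle s, D^*(\bar x + td)\rangle = \|D^*(\bar x + td)\|_1$, yielding
\[
\|D^*(\bar x + td)\|_1 = 1 + t \langle Ds, d \rangle .
\]
The preliminary observation forces the left-hand side to be at least $1$, and since $t > 0$ this gives $\langle Ds, d\rangle \geq 0$, which is $(i)$. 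For $(iii)$, if additionally $d \in \Ker D^*_{\bar J}$, the zero components of $D^*\bar x$ remain zero along the segment, so $s = \bar s$ exactly; applying the same identity for both signs of $t$ then pins $\langle D\bar s, d\rangle = 0$.

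For $(ii)$ I will invoke separation. The nonempty convex open set $\mathrm{int}(B_1)$ is disjoint from the convex set $\A$, so the Hahn--Banach theorem supplies a nonzero $\alpha$ and a scalar $\beta$ with $\langle \alpha, \cdot\rangle \leq \beta$ on $B_1$ and $\geq \beta$ on $\A$. Equality at $\bar x \in \A \cap B_1$ together with the symmetry of $\dir(\A)$ about $\bar x$ forces $\langle \alpha, \bar x\rangle = \beta$ and $\alpha \in \dir(\A)^\bot$, and exhibits $\alpha$ as an outer normal to $B_1$ at $\bar x$. By \cref{lem:sign&norm}, the facets of the half-space representation of \cref{prop:unit-ball} active at $\bar x$ are precisely those indexed by $s \succeq \bar s$; the standard polyhedral fact that the normal cone at a vertex of a polyhedron is the conic hull of the outer normals of the active facets then yields $\alpha \in \sum_{s \succeq \bar s} \R_+ Ds$. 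The main technical point is this normal-cone identification for the (redundant) half-space representation, which I will justify either by citation or by computing $\partial \|D^*\cdot\|_1(\bar x) = D\, \partial \|\cdot\|_1(D^* \bar x)$ via the polyhedral chain rule and using $N_{B_1}(\bar x) = \R_+ \partial \|D^*\cdot\|_1(\bar x)$.

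Finally, for $(iv)$, given $x \in \A$ with $\sign(D^*x) \preceq \bar s$, all components of $D^*x$ indexed by $\bar J$ vanish (since $\bar s_i = 0$ there), so $d := x - \bar x$ lies in $\dir(\A) \cap \Ker D^*_{\bar J}$. Applying $(iii)$ to $d$ gives $\langle D\bar s, x\rangle = \langle D\bar s, \bar x\rangle = \langle \bar s, D^*\bar x\rangle = \|D^*\bar x\|_1 = 1$ by \cref{lem:sign&norm}. Since $\sign(D^*x) \preceq \bar s$, another application of \cref{lem:sign&norm} rewrites $\langle \bar s, D^*x\rangle = \langle D\bar s, x\rangle = 1$ as $\|D^*x\|_1 = 1$, giving $x \in \partial B_1$ as required. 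Item $(v)$ is the specialization of this argument to $\sign(D^*x) = \bar s$.
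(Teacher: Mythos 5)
Your proof is correct, and for items (i), (iii), (iv), (v) it follows essentially the same path as the paper: the identity $\|D^*(\bar x+td)\|_1 = 1+t\langle Ds,d\rangle$ for the stabilized sign $s\succeq\bar s$, combined with $\|D^*\cdot\|_1\ge 1$ on $\A$, is exactly the computation the paper runs (it phrases (i) contrapositively and derives (iii) from (i) via the observation that $\langle Ds,d\rangle=\langle D\bar s,d\rangle$ for $d\in\Ker D^*_{\bar J}$ and $s\succeq\bar s$, whereas you rerun the perturbation for $\pm t$; this is cosmetic), and your (iv)--(v) coincide with the paper's. The genuine divergence is (ii). The paper argues by contradiction with polyhedral cone duality: if $\dir(\A)^\bot\cap\sum_{s\succeq\bar s}\R_+Ds=\{0\}$, then since both are polyhedral cones the dual of the intersection is the (closed) sum of the duals, giving $\dir(\A)+\bigcap_{s\succeq\bar s}(Ds)^*=\R^n$, which together with (i) would force $\R^n\subset\bigcup_{s\succeq\bar s}(Ds)^*$, contradicted by the direction $d=x-\bar x$ with $x\in\mathring B_1$. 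You instead separate $\A$ from $\mathring B_1$ by Hahn--Banach and identify the separating functional $\alpha$ as a nonzero element of $\dir(\A)^\bot\cap N_{B_1}(\bar x)$, then use the active-constraint description $N_{B_1}(\bar x)=\sum_{s\succeq\bar s}\R_+Ds$. Both are sound; the paper's route needs only the polyhedral identity $(K_1\cap K_2)^*=K_1^*+K_2^*$ and reuses (i), while yours requires the normal-cone formula for a redundant $H$-representation (valid at any boundary point, not just a vertex as you write -- the general statement is what you need, and it does hold without constraint qualification, e.g.\ via Farkas or via the subdifferential computation $N_{B_1}(\bar x)=\R_+D\,\partial\|\cdot\|_1(D^*\bar x)$ that you sketch). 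Your version is arguably more geometric and makes explicit the link to the optimality conditions the paper only exploits later in \cref{thm:arb}; the paper's version stays entirely inside elementary cone calculus. Either way the statement is fully established.
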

	
	\begin{proof}
		(i) Suppose that the inclusion does not hold and let $d \in \dir(\A)$ such that $\langle Ds, d \rangle < 0$ for all $s \succeq \bar s$.
		Then $\langle Ds, \bar x + \varepsilon d \rangle < 1$ for all $s$ and $\varepsilon > 0$ small. Indeed, by \cref{lem:sign&norm},
		if $s \succeq \bar s$, then $\langle Ds, \bar x + \varepsilon d \rangle < 1$ for $\varepsilon > 0$;
		otherwise, $\langle Ds, \bar x + \varepsilon d \rangle < 1$ for $\varepsilon$ small.
		Still by \cref{lem:sign&norm}, $\| \bar x + \varepsilon d  \|_1 <1$ for $\varepsilon > 0$ small,
		i.e. $\bar x + \varepsilon d \in \A \cap \mathring B_1$,
		which is in contradiction with $\A \cap B_1 \subset \partial B_1$.
		
		(ii) Suppose that the intersection is reduced to $\{0\}$ and consider the dual cone of both sides of the expression
		(we denote by $C^*=\{ y \in \R^n :  \forall x \in C, \langle y,x\rangle \ge 0\}$ of a subset $C \subset \R^n$).
		Since $\dir(\A)^\bot$ and $\left( \sum_{s \succeq \bar s} \R_+ Ds \right) \ne \{0\}$ are two polyhedral cones,
		we get that
		\[
		\dir(\A)^{\bot*}  + \Big( \sum_{s \succeq \bar s} \R_+ Ds \Big)^* = \R^n,
		\]
		where $	\dir(\A)^{\bot*} = \dir(\A)$ and 
		$\Big( \sum_{s \succeq \bar s} \R_+ Ds \Big)^* = \bigcap_{s \succeq \bar s} Ds ^*$.
		It follows from (i) that $\R^n \subset \bigcup_{s \succeq \bar s}  Ds ^*$, which is not true
		(note e.g. that $d = x-\bar x$ with $x \in \mathring B_1$ is such that $\langle Ds, d \rangle < 0)$ for all $s \succeq \bar s$).
		
		(iii) First note that for $d \in \Ker D^*_{\bar J}$, $\langle Ds, d \rangle = \langle D\bar s, d \rangle$ for all $s \succeq \bar s$.
		Indeed, $(D^* d)_i = 0$ if $\bar s_i = 0$ and $s_i = \bar s_i$ if $\bar s_i \ne 0$, thus $s_i (D^* d)_i = \bar s_i(D^* d)_i$ for all $i$
		and $\langle s, D^*d \rangle = \langle \bar s, D^*d \rangle$.
		Let now $d \in \dir(\A) \cap \Ker D^*_{\bar J}$. Since by (i) there exists $s \succeq \bar s$ such that $\langle Ds, d \rangle \ge 0$,
		it follows that $\langle D\bar s, d \rangle \ge 0$. And since $-d \in \dir(\A) \cap \Ker D^*_{\bar J}$ too, we get that
		$\langle D\bar s, d \rangle = 0$, which proves the inclusion and the equivalent equality.
		
		(iv) Let $x \in \A$ such that $\sign(D^*x) \preceq \bar s$ (in particular, $\bar J \subset \cosupp(D^*x)$)
		and let $d = x - \bar x$. Then $d \in \dir(\A) \cap \Ker D^*_{\bar J}$ (recall that $\bar x \in \A$ and $\bar J = \cosupp(D^*\bar x)$),
		and by (ii), $d\in (D \bar s)^\bot$.
		By \cref{lem:sign&norm}, $\| D^*x\|_1 = \langle \bar s , D^*x \rangle = \langle D\bar s , \bar x + d \rangle
		= \langle D\bar s , \bar x \rangle = \| D^*\bar x\|_1 = 1$, which proves the inclusion and the equivalent equality.
		
		(v) The proof is the same as for (iv).
	\end{proof}

The following theorem is similar to \cref{lem:convex-subset} when we replace convex subset by sub-polyhedra (here of the unit sphere).
\begin{theorem} \label{lem:affine_compo}
	Let $\A$ be an affine subspace intersecting $\partial B_1$.
	Let $\bar x \in \A \cap \partial B_1$, $\bar s= \sign(D^* \bar x)$, $\bar J = \cosupp(D^*\bar x)$, and
	$\bar F = B_1 \cap \{ x \in \R^n : \langle D\bar s , x \rangle = 1 \}$.
	Then 
	\[
	\bar G = \A \cap \bar F
	\]
	\begin{enumerate}[(i)]
		\item is the smallest face of $\A \cap B_1$ such that $\bar x \in \bar G$
		and the unique face of $\A \cap B_1$ such that $\bar x \in \ri(\bar G)$
		($\bar G$ is possibly equal to $\A \cap B_1$ itself);
		\item satisfies the following:
		\begin{align*}
		\bar G &=  \mathcal A \cap \partial B_1 \cap \{ x \in \R^n : \sign(D^*x) \preceq \bar s \}, \\
		\ri(\bar G) &  = \mathcal A \cap \partial B_1 \cap \{ x \in \R^n : \sign(D^*x) = \bar s \},\\ 
		\dir(\bar G) &  = \dir(\mathcal A) \cap (D \bar s)^\bot \cap \Ker D^*_{\bar J};
		\end{align*}
		\item satisfies the following, in the case where $\A \cap B_1 \subset \partial B_1$:
		\begin{align*}
		\bar G &=  \mathcal A \cap \{ x \in \R^n : \sign(D^*x) \preceq \bar s \},  \\
		\ri(\bar G) &  = \mathcal A  \cap \{ x \in \R^n : \sign(D^*x) = \bar s \}, \\
		\dir(\bar G) &  = \dir(\mathcal A) \cap \Ker D^*_{\bar J}. 
		\end{align*}
		
	\end{enumerate}	
\end{theorem}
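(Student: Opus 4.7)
The overall strategy is to reduce everything to results we already have on convex components of the unit sphere. The core identities come from combining \cref{lem:convex-subset} (which describes $\bar F$, $\ri(\bar F)$ and $\dir(\bar F)$) with \cref{lem:ri-inter} (which transports relative interiors and directions through intersections with an affine subspace), and finally with \cref{lem:dirA} which removes the qualifiers $\partial B_1$ and $(D\bar s)^\perp$ in the special case $\A \cap B_1 \subset \partial B_1$.

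First I would show that $\bar G = \A \cap \bar F$ is a face of $\A \cap B_1$. Since $\bar F = B_1 \cap H$ with $H = \{x : \langle D\bar s,x\rangle = 1\}$ a supporting hyperplane of $B_1$ (by \cref{prop:unit-ball}), the hyperplane $H$ also supports $\A \cap B_1$, so $\bar G = (\A \cap B_1) \cap H$ is either an exposed face or all of $\A \cap B_1$ (the latter happens exactly when $\A \cap B_1 \subset H$). In any case $\bar G$ is a face of $\A \cap B_1$ and it contains $\bar x$.

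Next I would prove part (ii). The first equality is \cref{lem:convex-subset}(i) intersected with $\A$. For the other two, the key observation is that $\bar x \in \A \cap \ri(\bar F)$: indeed $\bar x \in \A$ by assumption, and $\bar x \in \ri(\bar F)$ by \cref{lem:convex-subset}(v) (since $\sign(D^*\bar x) = \bar s$). By \cref{lem:ri-inter} we then have
\[
  \ri(\bar G) = \ri(\A \cap \bar F) = \A \cap \ri(\bar F),
  \qquad
  \dir(\bar G) = \dir(\A) \cap \dir(\bar F),
\]
and substituting the expressions for $\ri(\bar F)$ and $\dir(\bar F)$ given by \cref{lem:convex-subset}(v) and (iii) yields the two remaining identities.

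Having established (ii), part (i) follows immediately: the identity $\ri(\bar G) = \A \cap \ri(\bar F)$ shows $\bar x \in \ri(\bar G)$, and since every point of a polyhedron lies in the relative interior of a unique face, $\bar G$ is the unique face of $\A \cap B_1$ containing $\bar x$ in its relative interior; it is then automatically the smallest face of $\A \cap B_1$ containing $\bar x$.

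Finally for part (iii), under the extra hypothesis $\A \cap B_1 \subset \partial B_1$ I would apply \cref{lem:dirA}: parts (iv) and (v) of that lemma say precisely that the qualifier $\partial B_1$ is redundant inside $\A \cap \{x : \sign(D^*x) \preceq \bar s\}$ and $\A \cap \{x : \sign(D^*x) = \bar s\}$, and part (iii) says that $\dir(\A) \cap \Ker D^*_{\bar J}$ is already contained in $(D\bar s)^\perp$, so the factor $(D\bar s)^\perp$ may be dropped from the expression of $\dir(\bar G)$ given in (ii).

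The main subtlety is really verifying that $\bar G$ is a face (rather than just a convex subset) of $\A \cap B_1$ in the borderline situation where $\bar G = \A \cap B_1$; once this is clear, everything else is an assembly of earlier lemmas, and the hypothesis $\A \cap B_1 \subset \partial B_1$ in (iii) plugs directly into \cref{lem:dirA}.
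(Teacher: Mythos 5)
Your proposal is correct and follows essentially the same route as the paper: establish that $\bar G$ is an exposed face with $\bar x \in \ri(\bar G) = \A \cap \ri(\bar F)$ via \cref{lem:ri-inter}, pull the descriptions of $\bar F$ from \cref{lem:convex-subset}, and invoke \cref{lem:dirA} for the strengthened statements in (iii). The only cosmetic difference is in (i), where you appeal to the classical fact that the relative interiors of the faces of a polyhedron partition it, whereas the paper derives minimality and uniqueness directly from \cref{lem:general-convex-subset}; both arguments are valid.
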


\begin{proof} (i) First note that $\bar G = (\A \cap B_1) \cap \{ x : \langle D\bar s , x \rangle = 1 \}$
	is an exposed face of $\A \cap B_1$ (in particular it is a convex subset of $\A \cap B_1$) and is such that
	$\bar x \in \ri(\bar G) = \A \cap \ri(\bar F)$ by \cref{lem:ri-inter}.
	Let $G$ be a face of $\A \cap B_1$ such that $\bar x \in G$.
	If $G = \A \cap B_1$, then $\bar G \subset G$; otherwise $G$ is an exposed face since $\A \cap B_1$ is a convex polyhedron,
	and by \cref{lem:general-convex-subset}, $\bar G \subset G$.
	Suppose now that $\bar x \in \ri(G)$. Then permuting $G$ and $\bar G$, we get that $G \subset \bar G$, thus $G = \bar G$.
	
	(ii) By definition, $\bar G = \A \cap \bar F$ and $\bar x \in \A \cap \ri(\bar F)$.
	By \cref{lem:ri-inter}, $\ri(\bar G) = \A \cap \ri(\bar F)$ and $\dir(\bar G) = \dir(\A) \cap \dir(\bar F)$.
	The expression of these sets follows from \cref{lem:convex-subset}.
	
	(iii) We proved the strengthened expression of the previous sets in \cref{lem:dirA}.
\end{proof}

We get the next result on $\A \cap B_1$ itself in the case where it is a subset of $\partial B_1$.

\begin{proposition} \label{prop:itself}
	Let $\A$ be an affine subspace such that $\emptyset \ne \A \cap B_1 \subset \partial B_1$.
	Then \[ \A \cap B_1 = \A \cap \bar F \]
	with $\bar F = B_1 \cap \{ x \in \R^n : \langle D\bar s , x \rangle = 1 \} $ and $\bar s = \max_{x \in \A \cap B_1} \sign(D^* x)$
	(or equivalently $\bar s = \sign(D^*\bar x)$ for some $\bar x \in \ri(\A \cap B_1)$).
	In particular, the results of \cref{lem:affine_compo}~(iii) hold for $\A \cap B_1$.
\end{proposition}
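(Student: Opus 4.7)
The plan is to pick a point in the relative interior of $\A \cap B_1$ and invoke the uniqueness clause of \cref{lem:affine_compo}(i): the face of $\A \cap B_1$ singled out by that point's sign must coincide with $\A \cap B_1$ itself, because $\A \cap B_1$ is trivially a face of itself and the point lies in its relative interior.

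More precisely, I would first observe that $\A \cap B_1$ is a nonempty convex set and therefore has nonempty relative interior; pick any $\bar x \in \ri(\A \cap B_1)$ and set $\bar s := \sign(D^* \bar x)$. Since by hypothesis $\A \cap B_1 \subset \partial B_1$, the set $\A \cap B_1$ is a nonempty convex subset of the unit sphere, so \cref{cor:sign_map} applies and yields that $\sign(D^* \cdot)$ is constant on $\ri(\A \cap B_1)$ and equal there to $\max_{x \in \A \cap B_1} \sign(D^* x)$. This immediately gives the two equivalent definitions of $\bar s$ claimed in the statement.

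Next, I would apply \cref{lem:affine_compo}(i) at the point $\bar x$ with this choice of $\bar s$ and $\bar F = B_1 \cap \{ x \in \R^n : \langle D\bar s , x \rangle = 1 \}$. That lemma asserts that $\bar G := \A \cap \bar F$ is the \emph{unique} face of $\A \cap B_1$ containing $\bar x$ in its relative interior. Since $\A \cap B_1$ is itself a face of $\A \cap B_1$ and $\bar x \in \ri(\A \cap B_1)$ by construction, the uniqueness forces $\A \cap B_1 = \bar G = \A \cap \bar F$, which is the desired equality.

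The final sentence of the proposition is then immediate: having identified $\A \cap B_1$ with $\bar G$, the descriptions of $\bar G$, $\ri(\bar G)$ and $\dir(\bar G)$ supplied by \cref{lem:affine_compo}(iii) (which applies precisely under the hypothesis $\A \cap B_1 \subset \partial B_1$) directly furnish the analogous descriptions of $\A \cap B_1$. There is no real obstacle here; the substantive content has been packaged into \cref{cor:sign_map} and \cref{lem:affine_compo}, and the only conceptual point is to notice that the uniqueness in part (i) of the latter, combined with the tautology that a convex set is the face of itself whose relative interior it equals, pins down $\bar G$ as the whole of $\A \cap B_1$.
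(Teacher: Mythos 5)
Your proposal is correct and follows essentially the same route as the paper: well-definedness of $\bar s$ via \cref{cor:sign_map}, then the uniqueness clause of \cref{lem:affine_compo}~(i) applied at a point $\bar x \in \ri(\A \cap B_1)$, together with the observation that $\A \cap B_1$ is a face of itself whose relative interior contains $\bar x$, forces $\A \cap \bar F = \A \cap B_1$. No gaps.
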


\begin{proof}
	Since $\A \cap B_1$ is a nonempty convex subset of $\partial B_1$, $\bar s$ is well-defined by \cref{cor:sign_map}.
	Let $\bar x \in \ri(\A \cap B_1) \subset \A \cap \partial B_1$. Then by \cref{lem:affine_compo}~(i),
	$\A \cap \bar F$ is the unique face of $\A \cap B_1$ containing $\bar x$ in its relative interior;
	it is thus equal to the face $\A \cap B_1$.
\end{proof}

In the general case, we can describe all the exposed faces of $\A \cap B_1$. 

\begin{proposition} \label{prop:faces_of_inter}
	Let $\A$ be an affine subspace such that $\emptyset \ne \A \cap B_1 \ne \aff(\A \cap B_1)$.
	Let $G$ be a face $\A \cap B_1$ exposed in $\aff(\A \cap B_1)$.
	Then \[G = \A \cap \bar F \]
	with $\bar F = B_1 \cap \{ x \in \R^n : \langle D\bar s , x \rangle = 1 \} $ and $\bar s = \max_{x \in G} \sign(D^* x)$
	(or equivalently $\bar s = \sign(D^*\bar x)$ for some $\bar x \in \ri(G)$).
	In particular, the results of \cref{lem:affine_compo}~(ii) 
	(or (iii) if $\A \cap B_1 \subset \partial B_1$) hold for $G$.
\end{proposition}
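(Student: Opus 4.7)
The plan is to reduce \cref{prop:faces_of_inter} to the uniqueness statement in \cref{lem:affine_compo}~(i). Concretely, I will pick a point $\bar x \in \ri(G)$, show that $\bar x$ lies in $\A \cap \partial B_1$, and then observe that $G$ and $\A \cap \bar F$ are two faces of $\A \cap B_1$ both containing $\bar x$ in their relative interior, which forces $G = \A \cap \bar F$.

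The only substantive step is the verification that every $\bar x \in \ri(G)$ belongs to $\partial B_1$. Since $G$ is exposed in $\aff(\A \cap B_1)$, applying \cref{lem:general-convex-subset} with $X = \A \cap B_1$ inside its own affine hull yields $G \subset \rbd(\A \cap B_1)$, so it is enough to establish $\rbd(\A \cap B_1) \subset \partial B_1$. If $\A \cap B_1 \subset \partial B_1$ this is immediate. Otherwise there exists $y \in \A$ with $\| D^* y \|_1 < 1$; then $\A \cap \{ x : \| D^* x \|_1 < 1 \}$ is a non-empty open subset of $\A$ contained in $\A \cap B_1$, which forces $\aff(\A \cap B_1) = \A$ and places this open set inside $\ri(\A \cap B_1)$. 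For the reverse inclusion I would use convexity of the $\ell^1$-norm along the line through $x \in \A \cap \partial B_1$ and $y$: the non-decreasing-secant inequality gives $\| D^*(x + t(x-y)) \|_1 \ge 1 + t(1 - \| D^* y \|_1) > 1$ for every $t > 0$, so $x$ admits points of $\A \setminus B_1$ arbitrarily close, hence $x \notin \ri(\A \cap B_1)$. Thus $\rbd(\A \cap B_1) = \A \cap \partial B_1$ and the desired inclusion holds in every case.

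With $\bar x \in \A \cap \partial B_1$ and $\bar s = \sign(D^* \bar x)$ in hand, \cref{lem:affine_compo}~(i) applies and tells us that $\A \cap \bar F$ is the unique face of $\A \cap B_1$ whose relative interior contains $\bar x$. Since $G$ is itself such a face by construction, uniqueness yields $G = \A \cap \bar F$. The identification $\bar s = \max_{x \in G} \sign(D^* x)$ follows at once from \cref{cor:sign_map} applied to the non-empty convex subset $G \subset \partial B_1$, and the final assertion about (ii)--(iii) of \cref{lem:affine_compo} transfers verbatim from that lemma. The only real obstacle throughout is the identification $\rbd(\A \cap B_1) \subset \partial B_1$ in the case where $\A$ meets the open ball — a detail \cref{lem:affine_compo} sidestepped by starting from a distinguished point of $\A \cap \partial B_1$ — but it dissolves into the short convexity estimate above.
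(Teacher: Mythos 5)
Your proof is correct and follows essentially the same route as the paper: reduce $G$ to $\rbd(\A\cap B_1)$ via \cref{lem:general-convex-subset}, show $\rbd(\A\cap B_1)\subset\A\cap\partial B_1$ by the same two-case analysis, and conclude by the uniqueness statement of \cref{lem:affine_compo}~(i). The only (harmless) difference is that you establish $\ri(\A\cap B_1)=\A\cap\mathring B_1$ by a direct convexity/secant estimate, whereas the paper simply invokes \cref{lem:ri-inter} applied to the full-dimensional convex set $B_1$.
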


\begin{proof}
	By \cref{lem:general-convex-subset}, $G \subset \rbd(\A \cap B_1)$. 
	Let us show that $\rbd (\A \cap B_1) \subset \A \cap \partial B_1$.
	We distinguish two cases: if $\A \cap B_1 \subset \partial B_1$, there is nothing to prove;
	otherwise, $\mathcal A \cap \mathring B_1 \ne \emptyset$. By \cref{lem:ri-inter} applied the full-dimensional convex $B_1$,
	$ \ri(\mathcal A \cap B_1)  = \ri(\mathcal A) \cap \ri(B_1) = \mathcal A \cap \mathring B_1 $.
	Then $\rbd(\mathcal A \cap B_1) = \mathcal A \cap B_1 \setminus \mathcal A \cap \mathring B_1 = \mathcal A \cap \partial B_1$.
	In particular, $G$ is a nonempty convex subset of $\partial B_1$, thus $\bar s$ is well-defined by \cref{cor:sign_map}.
	
	Let $\bar x \in \ri(G) \subset \A \cap \partial B_1$. Then by \cref{lem:affine_compo}~(i),
	$\A \cap \bar F$ is the unique face of $\A \cap B_1$ containing $\bar x$ in its relative interior;
	it is thus equal to the face $G$.
\end{proof}

In this setting, we get a necessary and sufficient condition of extremality.
Note that the notion of extremality can be related to the topology of the set, here our condition only use an algebraic characterization.
\begin{proposition} \label{prop:NSCextremality}
Let $\A$ be an affine subspace intersecting $\partial B_1$.
Let $\bar x \in \A \cap \partial B_1$, $\bar s= \sign(D^*\bar x)$, and $\bar J = \cosupp(D^*\bar x)$.
Then 
\begin{align*}
	\bar x \in \ext(\A \cap B_1) & \Leftrightarrow \dir(\mathcal A) \cap (D \bar s)^\bot \cap \Ker D^*_{\bar J} = \{0\} \\
								 & \Leftrightarrow \dir(\mathcal A) \cap \Ker D^*_{\bar J} = \{0\} \text{ in the case where } \A \cap B_1 \subset \partial B_1.
\end{align*}
Moreover, $\A \cap B_1$ admits extreme points (that necessarily belong to $\A\cap \partial B_1$) 
if and only if it is compact (i.e. is a convex polytope),
if and only if $\dir(\mathcal A) \cap \Ker D^* = \{0\}$.
\end{proposition}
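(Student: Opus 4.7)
The plan is to derive both assertions directly from \cref{lem:affine_compo}, which identifies, for every $\bar x \in \A \cap \partial B_1$, the smallest face $\bar G = \A \cap \bar F$ of $\A \cap B_1$ containing $\bar x$ together with an explicit formula for $\dir(\bar G)$.

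For the chain of equivalences, I would first recall the standard fact that $\bar x$ is an extreme point of a convex set $C$ if and only if the smallest face of $C$ containing $\bar x$ is the singleton $\{\bar x\}$; since by \cref{lem:affine_compo}(i) this smallest face is precisely $\bar G$ and contains $\bar x$ in its relative interior, extremality is equivalent to $\dim(\bar G)=0$, i.e.\ $\dir(\bar G)=\{0\}$. Substituting the formula from \cref{lem:affine_compo}(ii) yields the first equivalence $\dir(\A) \cap (D\bar s)^\bot \cap \Ker D^*_{\bar J} = \{0\}$. In the case $\A \cap B_1 \subset \partial B_1$, the stronger expression from \cref{lem:affine_compo}(iii) gives at once the second equivalence with $\dir(\A) \cap \Ker D^*_{\bar J} = \{0\}$.

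For the \emph{moreover} part I would invoke the classical result that a closed convex set admits an extreme point if and only if its lineality space is trivial. I first compute the recession cone of $B_1$: since $td \in B_1$ for all $t \ge 0$ forces $t\|D^*d\|_1 \le 1$ for every $t \ge 0$ and hence $D^* d = 0$, the recession cone equals $\Ker D^*$. Intersecting with $\A$, the recession cone of $\A \cap B_1$ is $\dir(\A) \cap \Ker D^*$, an intersection of two linear subspaces, hence itself a subspace, hence equal to the lineality space of $\A \cap B_1$. This collapses everything: $\A \cap B_1$ admits an extreme point iff its lineality space is $\{0\}$ iff its recession cone is $\{0\}$ iff it is bounded iff (being closed) it is compact, iff $\dir(\A) \cap \Ker D^* = \{0\}$. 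The parenthetical inclusion of extreme points in $\A \cap \partial B_1$ is immediate since extreme points lie in the relative boundary of $\A \cap B_1$, which is contained in $\A \cap \partial B_1$ as noted inside the proof of \cref{prop:faces_of_inter}.

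The proof contains no real obstacle: essentially every step is a rewriting of \cref{lem:affine_compo} into the language of extreme points. The only conceptual point is to notice that because $\|D^*\cdot\|_1$ is merely a seminorm, the recession cone of $B_1$ is a genuine linear subspace, which is exactly what makes "admits extreme points" and "is compact" equivalent here.
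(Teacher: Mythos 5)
Your argument for the chain of equivalences is exactly the paper's: extremality of $\bar x$ in the polyhedron $\A \cap B_1$ is equivalent to the smallest face $\bar G = \A \cap \bar F$ from \cref{lem:affine_compo} being the singleton $\{\bar x\}$, i.e.\ to $\dir(\bar G) = \{0\}$, and the two displayed conditions are just the formulas of \cref{lem:affine_compo}~(ii) and (iii). No difference there.

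For the \emph{moreover} part your route is genuinely different, and both are correct. The paper closes a cycle of implications: it observes $\dir(\A) \cap \Ker D^* \subset \dir(\A) \cap (D\bar s)^\bot \cap \Ker D^*_{\bar J}$, so the existence of an extreme point forces $\dir(\A) \cap \Ker D^* = \{0\}$ via the first part applied at that point; this gives boundedness, hence compactness, hence existence of extreme points by Minkowski's theorem for compact convex sets. You instead compute the recession cone of $\A \cap B_1$ directly as $\dir(\A) \cap \Ker D^*$, note that it is a linear subspace and therefore coincides with the lineality space, and invoke the stronger classical theorem that a nonempty closed convex set has an extreme point if and only if it contains no line. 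What your approach buys is that it decouples the \emph{moreover} statement from the pointwise algebraic criterion (it would work even without the first part of the proposition) and makes explicit the key structural fact that the recession cone here is a subspace --- which is exactly why ``has extreme points'', ``compact'' and ``$\dir(\A)\cap\Ker D^* = \{0\}$'' collapse into one condition. What the paper's version buys is economy: it needs only the elementary inclusion of the two kernels and the existence of extreme points for compact convex sets, rather than the finer ``no lines'' characterization. Your justification of the parenthetical claim (extreme points lie in $\rbd(\A\cap B_1) \subset \A \cap \partial B_1$) is also the paper's; both treatments silently skip the degenerate case $\A \cap B_1 = \{\bar x\}$ where the relative boundary is empty, but there the conclusion is immediate from $\emptyset \ne \A \cap \partial B_1 \subset \A \cap B_1$.
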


\begin{proof}
	Since $\A \cap B_1$ is a convex polyhedron, $\bar x \in \ext(\A \cap B_1)$ $\Leftrightarrow$
	$\{ \bar x \} $ is a face $\A \cap B_1$ $\Leftrightarrow$  $\{ \bar x \}= \bar G$ of \cref{lem:affine_compo}
	$\Leftrightarrow$ $\dir(\bar G) = \{0\}$.
	
	Recall that extreme points belong to $\rbd(\A \cap B_1)$, and that as in \cref{prop:faces_of_inter},
	$\rbd(\A \cap B_1) \subset \A \cap \partial B_1$.
	Note also that we always have $\dir(\mathcal A) \cap \Ker D^* \subset \dir(\mathcal A) \cap (D \bar s)^\bot \cap \Ker D^*_{\bar J}$.
	Thus if $\A \cap B_1$ admits extreme points, then $\dir(\mathcal A) \cap \Ker D^* = \{0\}$ by the beginning of the corollary,
	which implies that $\A \cap B_1$ is bounded and thus compact, which in turn implies the existence of extreme points
	\cite[Part III, Proposition~2.3.3]{hiriart2013convex}.
\end{proof}

\begin{remark} \label{rmk:CSextremality}
	It is possible to show directly (via \cref{cor:SCextremality})
	that the condition above is sufficient.
	Indeed, assume that $\dir(\mathcal A) \cap (D \bar s)^\bot \cap \Ker D^*_{\bar J} = \{0\}$.
	Let $x \in \A \cap \partial B_1$ such that $\sign(D^*x) \preceq \bar s$;
	let us show that $x = \bar x$: $x - \bar x \in \dir(\A)$; by \cref{lem:sign&norm}, $\langle \bar s, D^* x \rangle  = \| D^* x \|_1 
	= \| D^* \bar x \|_1  = \langle \bar s, D^* \bar x \rangle$, thus $x - \bar x \in (D \bar s)^\bot$;
	by \cref{rmk:sign_order}, $\bar J \subset \cosupp(D^*x)$, thus $x - \bar x \in \Ker D^*_{\bar J}$.
 	Then $\bar s$ is minimal, and by \cref{cor:SCextremality}, $\bar x \in \ext(\A \cap B_1)$.
\end{remark}

\subsection{Consequence results on the unit ball} \label{sec:cons-unit}
The previous results will be at the core of our study of the solution set of~\eqref{eq:regularization} in \cref{sec:struct}.
Nevertheless, we can also dive deeper into this analysis in order to fully characterize the faces of the unit-ball as a byproduct.

A first consequence or reformulation of the previous results is that all the exposed faces of $B_1$ are of the form
$ B_1 \cap \{ x \in \R^n : \langle D\bar s , x \rangle = 1 \} $.

\begin{proposition} \label{prop:face2sign}
	Let $F$ be an exposed face of $B_1$.
	Then 
	\[
	F = B_1 \cap \{ x \in \R^n : \langle D\bar s , x \rangle = 1 \}
	\]
	with $\bar s = \max_{x \in F} \sign(D^* x)$
	(or equivalently $\bar s = \sign(D^*\bar x)$ for some $\bar x \in \ri(F)$). 
	Moreover,
	\begin{align*}
	F &  = \partial B_1 \cap \{ x \in \R^n : \sign(D^*x) \preceq \bar s \} \\
	& = \{ x \in \R^n : \langle D\bar s , x \rangle = 1 \} \cap \{ x \in \R^n : \sign(D^*x) \preceq \bar s \} , \\
	\ri(F) &  = \partial B_1 \cap \{ x \in \R^n : \sign(D^*x) = \bar s \} \\
	&  = \{ x \in \R^n : \langle D\bar s , x \rangle = 1 \} \cap \{ x \in \R^n : \sign(D^*x) = \bar s \}, \\
	\dir(F) &  = (D \bar s)^\bot \cap \Ker D^*_{\bar J} \text{ (where } \bar J = \cosupp(\bar s)\text{)}.
	\end{align*}
\end{proposition}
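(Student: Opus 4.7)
The plan is to recognize this proposition as a direct specialization of \cref{prop:faces_of_inter} to the ambient case $\A = \R^n$. Since $B_1$ is a full-dimensional convex polyhedron by \cref{prop:unit-ball}, we have $\aff(B_1) = \R^n$, so the hypothesis $\A \cap B_1 \ne \aff(\A \cap B_1)$ holds, and an exposed face of $B_1$ in $\R^n$ is the same as a face of $\A \cap B_1$ exposed in $\aff(\A \cap B_1)$. Applying \cref{prop:faces_of_inter} then immediately yields $F = \R^n \cap \bar F = \bar F = B_1 \cap \{x : \langle D\bar s, x\rangle = 1\}$ with $\bar s = \max_{x\in F}\sign(D^*x) = \sign(D^*\bar x)$ for any $\bar x \in \ri(F)$, as well as the identity $\dir(F) = \dir(\R^n) \cap (D\bar s)^\bot \cap \Ker D^*_{\bar J} = (D\bar s)^\bot \cap \Ker D^*_{\bar J}$ from \cref{lem:affine_compo}~(ii).

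Next I would read off the ``$\partial B_1$'' forms of $F$ and $\ri(F)$ from \cref{lem:affine_compo}~(ii): since $B_1 \not\subset \partial B_1$, we are in case (ii) rather than (iii), giving
\begin{align*}
F &= \partial B_1 \cap \{x \in \R^n : \sign(D^*x) \preceq \bar s\}, \\
\ri(F) &= \partial B_1 \cap \{x \in \R^n : \sign(D^*x) = \bar s\}.
\end{align*}
It remains to upgrade the condition ``$x \in \partial B_1$'' to the linear condition ``$\langle D\bar s, x\rangle = 1$'' in these expressions.

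For this final step I would invoke \cref{lem:sign&norm}: whenever $\sign(D^*x) \preceq \bar s$, the lemma gives $\|D^*x\|_1 = \langle \bar s, D^*x\rangle = \langle D\bar s, x\rangle$, so on the sign-constrained set $\{x : \sign(D^*x) \preceq \bar s\}$ the conditions $\|D^*x\|_1 = 1$ and $\langle D\bar s, x\rangle = 1$ are equivalent. Since $\{x : \sign(D^*x) = \bar s\} \subset \{x : \sign(D^*x) \preceq \bar s\}$, the same substitution works for $\ri(F)$. This yields the two alternative forms claimed for $F$ and $\ri(F)$.

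I do not expect any significant obstacle: the structural work has already been done in \cref{lem:convex-subset}, \cref{lem:affine_compo} and \cref{prop:faces_of_inter}, so the proof is essentially bookkeeping plus the one-line equivalence between the sphere and the hyperplane on the sign-compatible region, which is itself a direct application of \cref{lem:sign&norm}.
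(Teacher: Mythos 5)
Your proof is correct; it uses the same toolbox as the paper but instantiates it differently. You take $\A = \R^n$ in \cref{prop:faces_of_inter}, land in case~(ii) of \cref{lem:affine_compo} to get the $\partial B_1$ forms, and then convert the condition ``$x \in \partial B_1$'' into ``$\langle D\bar s , x \rangle = 1$'' by hand on the sign-compatible region via \cref{lem:sign&norm}. The paper instead gets the identity $F = B_1 \cap \{ x : \langle D\bar s , x \rangle = 1 \}$ from \cref{cor:singleton}, and then takes $\A = \{ x : \langle D\bar s , x \rangle = 1 \}$, the supporting hyperplane defining $F$; since $\A \cap B_1 \subset \partial B_1$ for a supporting hyperplane (\cref{lem:general-convex-subset}), case~(iii) of \cref{lem:affine_compo} (through \cref{prop:itself}) delivers the hyperplane forms directly, with the $\partial B_1$ forms supplied by \cref{lem:convex-subset}. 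The two routes are equivalent in substance: your one-line sphere-to-hyperplane equivalence is exactly the computation carried out inside the proof of \cref{lem:dirA}~(iv)--(v), on which case~(iii) rests. One small point you should make explicit: \cref{prop:faces_of_inter} requires $\A \cap B_1 \ne \aff(\A \cap B_1)$, i.e.\ $B_1 \ne \R^n$ here; this holds because $F$ is a nonempty subset of $\partial B_1$ (so $D \ne 0$ and $R$ is unbounded along some direction), but it is a stated hypothesis and deserves a word rather than a bare assertion.
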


\begin{proof}
	Since $F$ is a nonempty convex subset of $\partial B_1$, $\bar s$ is well-defined by \cref{cor:sign_map}.
	The first statement is a consequence of \cref{cor:singleton}.
	The next statements follow from \cref{lem:convex-subset} and \cref{prop:itself} with $\A = \{ x \in \R^n : \langle D\bar s , x \rangle = 1 \}$
	a supporting hyperplane of $B_1$ defining the face $F$ (recall or note by \cref{lem:general-convex-subset} that $\A \cap B_1 \subset \partial B_1$
	for any $\A$ supporting hyperplane of $B_1$).
\end{proof}

\begin{remark}
	Any exposed face $F$ of $B_1$ satisfies $\Ker D^* \subset \dir(F) \subset (D \bar s )^\bot$
	with equality if and only if $(D \bar s)^\bot \cap \Ker D^*_{\bar J} = \Ker D^*$ (for the left inclusion)
	and $(D \bar s)^\bot \subset  \Ker D^*_{\bar J}$ (for the right inclusion).
\end{remark}

Together with \cref{lem:linear_sign}, the previous proposition gives
the following half-space representations of the exposed faces of $B_1$.
\begin{corollary}
	Let $F$ be an exposed face of $B_1$.
	Then 
	\[
	F = \{ x \in \R^n : \langle D\bar s , x \rangle = 1, D^*_{\bar J} x = 0,  \diag(\bar s_{\bar I})D^*_{\bar I} x \ge 0 \}
	\]
	with $\bar s = \max_{x \in F} \sign(D^* x)$, $\bar J = \cosupp(\bar s)$, and $\bar I = \supp(\bar s)$.
\end{corollary}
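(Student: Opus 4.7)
The plan is to combine the characterization of $F$ from \cref{prop:face2sign} with the linear reformulation of the sign-order constraint given by \cref{lem:linear_sign}; essentially nothing new needs to be done beyond assembling these two ingredients.

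First I would invoke \cref{prop:face2sign} applied to the exposed face $F$, with $\bar s = \max_{x \in F} \sign(D^* x)$ (well-defined by \cref{cor:sign_map}). This yields the representation
\begin{equation*}
  F = \{ x \in \R^n : \langle D\bar s , x \rangle = 1 \} \cap \{ x \in \R^n : \sign(D^*x) \preceq \bar s \}.
\end{equation*}
The first factor is already in the desired linear form, contributing the equality $\langle D\bar s, x\rangle = 1$.

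Next I would apply \cref{lem:linear_sign} to the second factor. With $\bar J = \cosupp(\bar s)$ and $\bar I = \supp(\bar s)$, that lemma gives
\begin{equation*}
  \{ x \in \R^n : \sign(D^*x) \preceq \bar s \} = \{ x \in \R^n : D^*_{\bar J} x = 0 \text{ and } \diag(\bar s_{\bar I}) D^*_{\bar I} x \ge 0 \}.
\end{equation*}
Substituting this into the previous display, the intersection is exactly the claimed half-space representation.

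There is essentially no obstacle here: the statement is a cosmetic rewriting that packages \cref{prop:face2sign} together with \cref{lem:linear_sign}. The only item worth flagging is the well-definedness of $\bar s$, which was already handled in \cref{cor:sign_map} since $F \subset \partial B_1$ is a nonempty convex subset of the unit sphere, so the maximum over $F$ of $\sign(D^*\cdot)$ exists and is attained throughout $\ri(F)$.
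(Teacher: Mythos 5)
Your proof is correct and follows exactly the paper's route: the corollary is stated there as an immediate combination of \cref{prop:face2sign} (which gives $F$ as the intersection of the hyperplane $\{\langle D\bar s, x\rangle = 1\}$ with the sign-order set) and \cref{lem:linear_sign} (which linearizes the sign-order constraint). Nothing is missing.
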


By \cref{lem:convex-subset} and \cref{prop:face2sign},
the mapping $F \mapsto \max_{x \in F} \sign(D^*x)$ is a bijection between
the set of exposed faces of $B_1$ and the set of feasible signs $\{ \sign(D^*x) : x \in \partial B_1\}$.
The next observation is that this bijection preserves the partial orders (i.e. it is an order isomorphism).

\begin{proposition} \label{prop:inclusion-sign}
 Let $F_1$ and $F_2$ be two exposed faces of $B_1$ and 
 \[
	s_i = \max_{x \in F_i} \sign(D^*x), \ i=1,2. 
 \]
Then
 \[
 F_1 \subset F_2 \Leftrightarrow  s_1 \preceq s_2 .
 \]
 In this case, denoting by $J_1 = \cosupp(s_1)$,
 	\begin{align*}
 F_1 & = F_2 \cap \{ x \in \R^n : J_1 \subset \cosupp(D^*x)  \} , \\
 \ri(F_1) &  = F_2 \cap \{ x \in \R^n : J_1 = \cosupp(D^*x)  \}, \\
 \dir(F_1) &  = \dir(F_2) \cap \Ker D^*_{J_1} .
 \end{align*}
\end{proposition}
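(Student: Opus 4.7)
The engine of the argument is \cref{prop:face2sign}, which identifies each exposed face with its maximal sign and gives the key equivalent descriptions
\[
F_i = \partial B_1 \cap \{ x : \sign(D^*x) \preceq s_i \}, \quad
\ri(F_i) = \partial B_1 \cap \{ x : \sign(D^*x) = s_i \}, \quad
\dir(F_i) = (D s_i)^\bot \cap \Ker D^*_{J_i},
\]
where $J_i = \cosupp(s_i)$. I will start from these and simply chase sign consistency.

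\textbf{Equivalence.} For $(\Leftarrow)$, if $s_1 \preceq s_2$ then any $x \in F_1$ satisfies $\sign(D^*x) \preceq s_1 \preceq s_2$ (transitivity of $\preceq$, \cref{rmk:sign_order}), so $x \in F_2$. For $(\Rightarrow)$, pick $\bar x_1 \in \ri(F_1)$; then $\sign(D^* \bar x_1) = s_1$ by the relative-interior description, and since $\bar x_1 \in F_1 \subset F_2$ the face description gives $s_1 = \sign(D^*\bar x_1) \preceq s_2$.

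\textbf{Descriptions of $F_1$ and $\ri(F_1)$ inside $F_2$.} Assume $s_1 \preceq s_2$. For any $x \in F_2$ we have $\sign(D^*x) \preceq s_2$, so the sign of $D^*x$ is already consistent with $s_1$ on $\supp(s_1)$. Hence the additional constraint $\sign(D^*x) \preceq s_1$ is equivalent to asking that the support of $\sign(D^*x)$ avoid $J_1 = \cosupp(s_1)$, i.e. $J_1 \subset \cosupp(D^*x)$ (one direction is immediate from the definition of $\preceq$; the converse uses that for $i \notin J_1$ with $(D^*x)_i \neq 0$, consistency with $s_2$ and $s_{1,i} = s_{2,i}$ force the sign to match $s_{1,i}$). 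This proves the description of $F_1$. The same reasoning with equality of supports gives $\ri(F_1) = F_2 \cap \{x : J_1 = \cosupp(D^*x)\}$.

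\textbf{Description of $\dir(F_1)$.} Since $s_1 \preceq s_2$ we have $J_2 \subset J_1$, so $\Ker D^*_{J_1} \subset \Ker D^*_{J_2}$, and therefore
\[
\dir(F_2) \cap \Ker D^*_{J_1} = (D s_2)^\bot \cap \Ker D^*_{J_2} \cap \Ker D^*_{J_1} = (D s_2)^\bot \cap \Ker D^*_{J_1}.
\]
The task reduces to showing $(D s_2)^\bot \cap \Ker D^*_{J_1} = (D s_1)^\bot \cap \Ker D^*_{J_1}$. For any $d \in \Ker D^*_{J_1}$ the vector $D^*d$ is supported in $\supp(s_1) \subset \supp(s_2)$, and on this common support $s_1$ and $s_2$ agree by $s_1 \preceq s_2$; hence $\langle D s_1, d \rangle = \langle s_1, D^*d\rangle = \langle s_2, D^*d\rangle = \langle D s_2, d \rangle$, so the two orthogonality conditions coincide on $\Ker D^*_{J_1}$. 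Combined with the formula for $\dir(F_1)$ from \cref{prop:face2sign}, this yields $\dir(F_1) = \dir(F_2) \cap \Ker D^*_{J_1}$.

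\textbf{Expected difficulty.} Nothing here is deep; the only subtle point is the last step, where one must recognize that the change of linear functional $Ds_1 \mapsto Ds_2$ is invisible once tested against $\Ker D^*_{J_1}$, thanks to the compatibility of $s_1$ and $s_2$ on $\supp(s_1)$. The rest is bookkeeping with the poset $\preceq$ and direct invocations of \cref{prop:face2sign}.
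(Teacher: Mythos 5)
Your proposal is correct and follows essentially the same route as the paper: it rests on the descriptions of $F_i$, $\ri(F_i)$ and $\dir(F_i)$ from \cref{prop:face2sign}, uses the consistency of $\sign(D^*x)$ and $s_1$ (both being $\preceq s_2$) to convert the sign condition into a cosupport condition, and for the direction uses the same observation that $\langle Ds_1,d\rangle=\langle Ds_2,d\rangle$ for $d\in\Ker D^*_{J_1}$. No gaps.
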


\begin{proof}
 Suppose that $F_1 \subset F_2$ and let $x \in \ri(F_1) \subset F_2$; then $s_1 = \sign(D^*x) \preceq s_2$. 
 Conversely, suppose that $s_1 \preceq s_2$ and let $x \in F_1$; then $\sign(D^*x) \preceq s_1 \preceq s_2$, thus $x\in F_2$.
 
 Assume now that these two conditions hold. Then, we have $F_1 \subset F_2 \cap  \{ x : J_1 \subset \cosupp(D^*x)  \}$ since
 $F_1 \subset \{ x : J_1 \subset \cosupp(D^*x)  \}$
 (recall \cref{rmk:sign_order}: $\sign(D^*x) \preceq s_1$ implies that $J_1 \subset \cosupp(D^*x)$). 
 Conversely, let $x \in F_2 \cap \{ x : J_1 \subset \cosupp(D^*x)  \}$.
 Since $\sign(D^* x)$ and $s_1$ are both $\preceq s_2$, they are consistent and thus 
 the converse is true: $J_1 \subset \cosupp(D^*x)$ implies that $\sign(D^* x) \preceq s_1$. 
 And since $x \in F_2 \subset\partial B_1$, $x \in F_1$.
 The same proof holds for $\ri(F_1)$ and $\{ x : J_1 = \cosupp(D^*x)  \}$.
 For $\dir(F_1)$, note that $\Ker D^*_{J_1} \subset \Ker D^*_{J_2}$
 (since $J_2 = \cosupp(s_2) \subset J_1$). Then $\dir(F_2) \cap \Ker D^*_{J_1} = (Ds_2)^\bot \cap \Ker D^*_{J_1}$
 (and $\dir(F_1) = (Ds_1)^\bot \cap \Ker D^*_{J_1}$).
 By the proof of \cref{lem:dirA}~(ii), for $d \in \Ker D^*_{J_1}$,
 $\langle Ds, d \rangle = \langle Ds_1, d \rangle$ for all $s \succeq s_1$. In particular,
 $(Ds_1)^\bot \cap \Ker D^*_{J_1} = (Ds_2)^\bot \cap \Ker D^*_{J_1}$,
 which concludes the proof.
 
\end{proof}

 In the spirit of \cite{boyer2018representer},
 we consider the compact polyhedron $(\Ker D^*)^\bot \cap B_1$,
 which is isomorphic to the projection of $B_1$ onto the quotient
 of the ambient space by the lineality space $\Ker D^*$.
 \cref{prop:NSCextremality} gives the following necessary and sufficient condition of extremality.
 
\begin{corollary} \label{cor:NSCextremality}
	The convex polyhedron $(\Ker D^*)^\bot \cap B_1$ is compact (i.e. is a convex polytope).
	It admits extreme points that belong to $(\Ker D^*)^\bot \cap \partial B_1$.
	Given $\bar x \in (\Ker D^*)^\bot \cap \partial B_1$, $\bar s= \sign(D^*\bar x)$, and $\bar J = \cosupp(D^*\bar x)$,
	\[
	\bar x \in \ext((\Ker D^*)^\bot \cap B_1)  \Leftrightarrow (\Ker D^*)^\bot \cap (D \bar s)^\bot \cap \Ker D^*_{\bar J} = \{0\}.
	\]
\end{corollary}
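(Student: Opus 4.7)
The plan is to apply \cref{prop:NSCextremality} directly with the affine subspace $\A = (\Ker D^*)^\bot$, which is actually a linear subspace so that $\dir(\A) = (\Ker D^*)^\bot$. First I would check the hypothesis that $\A \cap \partial B_1 \ne \emptyset$: since $D^*$ restricted to $(\Ker D^*)^\bot$ is injective, any nonzero element of $(\Ker D^*)^\bot$ has nonzero image under $D^*$ and can therefore be rescaled so that $\|D^*x\|_1 = 1$ (the degenerate case $D = 0$ makes the statement trivial since then $(\Ker D^*)^\bot = \{0\}$).

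Next I would obtain compactness via the last part of \cref{prop:NSCextremality}: $\A \cap B_1$ is compact if and only if $\dir(\A) \cap \Ker D^* = \{0\}$, and here this reduces to $(\Ker D^*)^\bot \cap \Ker D^* = \{0\}$, which is automatic since a subspace and its orthogonal complement intersect only at the origin. Compactness then guarantees that $(\Ker D^*)^\bot \cap B_1$ admits extreme points; these extreme points lie in $\rbd((\Ker D^*)^\bot \cap B_1)$, which as observed in the proof of \cref{prop:faces_of_inter} is contained in $\A \cap \partial B_1 = (\Ker D^*)^\bot \cap \partial B_1$.

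For the extremality characterization, I would apply the first equivalence of \cref{prop:NSCextremality} to $\bar x \in (\Ker D^*)^\bot \cap \partial B_1$ with $\bar s = \sign(D^*\bar x)$ and $\bar J = \cosupp(D^*\bar x)$: this yields
\[
\bar x \in \ext((\Ker D^*)^\bot \cap B_1) \Leftrightarrow \dir(\A) \cap (D\bar s)^\bot \cap \Ker D^*_{\bar J} = \{0\},
\]
which is exactly the stated condition after substituting $\dir(\A) = (\Ker D^*)^\bot$. There is no real obstacle here — the result is a clean instantiation of \cref{prop:NSCextremality} — so the only thing requiring a moment of care is the bookkeeping that $\A$ being a linear subspace gives $\dir(\A) = \A$, and the observation that the general hypothesis $\A \cap \partial B_1 \ne \emptyset$ is automatically satisfied under the implicit assumption $D \ne 0$.
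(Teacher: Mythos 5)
Your proof is correct and takes essentially the same route as the paper: the paper offers no separate argument and simply derives the corollary by instantiating \cref{prop:NSCextremality} with $\A = (\Ker D^*)^\bot$, which is exactly what you do, including the check that $\dir(\A) \cap \Ker D^* = (\Ker D^*)^\bot \cap \Ker D^* = \{0\}$ for compactness and the observation that extreme points lie in $\rbd(\A \cap B_1) \subset \A \cap \partial B_1$. Your explicit verification of the hypothesis $\A \cap \partial B_1 \ne \emptyset$ (modulo the degenerate case $D=0$) is a small point of care that the paper leaves implicit.
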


\begin{remark}
	In \cite[Section~4.1.3]{boyer2018representer}, the authors notice that 
	since $(\Ker D^*)^\bot \cap B_1$ and $\Im D^* \cap B_{\ell^1}$ are in bijection through $D^*$
	and its pseudo-inverse $(D^*)^+$, it holds that 
	\[
	\bar x \in \ext((\Ker D^*)^\bot \cap B_1)  \Leftrightarrow \bar x = (D^*)^+\bar \theta \text{ with } \bar \theta \in \ext(\Im D^* \cap B_{\ell^1}).
	\]
	Thus we have two conditions of extremality, which are of different nature but of course equivalent, as it can be shown directly.
	We have already derived in \cref{rmk:CSextremality} that if
	$(\Ker D^*)^\bot \cap (D \bar s)^\bot \cap \Ker D^*_{\bar J} = \{0\}$, then
	$\bar x \in \ext((\Ker D^*)^\bot \cap B_1)$; it follows that $\bar \theta = D^* \bar x \in \ext(\Im D^* \cap B_{\ell^1})$
	and is such that $(D^*)^+\bar \theta = \bar x$ (recall that $(D^*)^+ D^*$ is the orthogonal projection onto $\Im D = (\Ker D^*)^\bot$).
	Conversely, let $\bar \theta \in \ext(\Im D^* \cap B_{\ell^1})$ and $\bar x = (D^*)^+\bar \theta$
	(note that $D^* \bar x= \bar \theta$ since $D^*(D^*)^+$ is the orthogonal projection onto $\Im D^*$).
	Let $d \in (\Ker D^*)^\bot \cap (D \bar s)^\bot \cap \Ker D^*_{\bar J}$.
	Note that $\cosupp(\bar \theta) = \bar J \subset \cosupp(D^* d)$.
	Then $\sign(\bar \theta + \varepsilon D^* d)= \bar s$ for $|\varepsilon|$ small,
	and by \cref{lem:sign&norm}, $\| \bar \theta + \varepsilon D^* d \|_1 = \langle \bar s,  \bar \theta + \varepsilon D^* d \rangle
	= \langle \bar s,  \bar \theta \rangle+ \varepsilon \langle D \bar s,  d \rangle = \| \bar \theta \|_1$,
	i.e. $\bar \theta + \varepsilon D^* d \in B_{\ell^1}$ for for $|\varepsilon|$ small	(and $\bar \theta + \varepsilon D^* d \in \Im D^*$).
	By extremality of $\bar \theta$, $D^*d = 0$, i.e. $d \in \Ker D^* \cap (\Ker D^*)^\bot = \{0\}$,
	which ends the proof.
\end{remark}

\subsection{Testing the extremality}
\label{sec:testing}

In this subsection, we aim to show that the results of \cref{sec:cons-unit} can be exploited to numerically test the extremality of a point.

Our first definition formalizes the idea of feasible sign, i.e., signs which are attained by some vector in the ambient space.
\begin{definition}
  We say that a sign $s \in \ens{-1,0,1}^p$ is \emph{feasible} with respect to $D$ if there exists $x \in \RR^n$ such that $s = \sign(D^* x)$.
\end{definition}
Note that we can replace at no cost $\RR^n$ by $B_1$ or $\partial B_1$ thanks to the homogeneity of the the $\lun$-norm.
Testing if a sign is feasible has the complexity of a linear program on $n$ variables with $p$ constraints:
\begin{lemma}
  Let $c \in \RR^n$, $s \in \ens{-1,0,1}^p$, $J = \cosupp(x)$, $I^+ = \enscond{i}{\dotp{d_i}{x} > 0}$ and $I^- = \enscond{i}{\dotp{d_i}{x} < 0}$. The sign $s$ is feasible if, and only if, the solution set of
  \begin{equation}\label{eq:linprog-test-sign}
    \umin{x \in \RR^n} \dotp{c}{x} \qsubjq
    \begin{cases}
      D_J^* x &= 0 \\
      D_{I^+}^* x &\geq +1 \\
      D_{I^-}^* x &\leq -1 
    \end{cases}
  \end{equation}
  is non-empty.
  Moreover, if $s$ is feasible, then any solution $x$ of~\eqref{eq:linprog-test-sign} is such that $s = \sign(D^* x)$.
\end{lemma}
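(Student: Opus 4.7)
The plan is to reduce everything in the lemma to an inspection of the feasible region
\[
 X = \{ x \in \R^n : D_J^* x = 0, \ D_{I^+}^* x \geq 1, \ D_{I^-}^* x \leq -1 \}.
\]
The cost $c$ is irrelevant for the logical equivalence: the solution set of~\eqref{eq:linprog-test-sign} is always contained in $X$, and it equals $X$ when $c=0$, so ``solution set non-empty'' and ``$X$ non-empty'' coincide for the intended feasibility check. One can therefore freely replace ``solution set'' by ``feasible region'' while reasoning.

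The first step is to establish both the ``moreover'' clause and the forward implication of the equivalence in a single move: for any $x \in X$, the equality constraint gives $(D^*x)_i = 0$ on $J$, the first batch of inequalities gives $(D^*x)_i \geq 1 > 0$ on $I^+$, and the second gives $(D^*x)_i \leq -1 < 0$ on $I^-$, so $\sign(D^*x) = s$ coordinate-wise. In particular, whenever the LP has a solution (hence $X \neq \emptyset$), the sign $s$ is feasible, and any solution automatically produces a witness with the prescribed sign.

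For the reverse implication I would start from any $x_0 \in \R^n$ with $s = \sign(D^*x_0)$ and rescale. The quantity $\sigma = \min_{i \in \supp(s)} |(D^*x_0)_i|$ is strictly positive by the very definition of the sign, and the one-homogeneity of $D^*$ makes $(1/\sigma)\,x_0$ a point of $X$, so $X$ is non-empty. The main obstacle here is not analytic but notational: the lemma is cast in LP form so that one can quote the announced $n$-variable, $p$-constraint complexity, but its substantive content is the feasibility of a linear system; the only subtle point is keeping track of the distinction between the feasible region and the (possibly empty, for a generic unbounded cost) solution set of~\eqref{eq:linprog-test-sign}, which is why the statement is really best read modulo a bounded-below choice of $c$.
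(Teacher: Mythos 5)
The paper states this lemma without any proof, so there is no official argument to compare against; judged on its own, your proof is correct and is surely the intended one. The forward direction and the ``moreover'' clause follow exactly as you say from reading off the constraints ($(D^*x)_i=0$ on $J$, $\geq 1>0$ on $I^+$, $\leq -1<0$ on $I^-$), and the reverse direction by rescaling a witness $x_0$ by $\sigma=\min_{i\in\supp(s)}\abs{(D^*x_0)_i}>0$ is the standard homogeneity argument (with the trivial degenerate case $\supp(s)=\emptyset$, where $x=0$ is feasible, worth a word).

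Your side remark about the cost vector is not mere pedantry but an actual correction to the statement: as written, the equivalence is between feasibility of $s$ and non-emptiness of the \emph{solution set} of the LP, and for a generic $c$ (e.g.\ any $c$ with a nonzero component along $\Ker D^*$, or more generally along a recession direction of the feasible region on which $\dotp{c}{\cdot}$ is negative) the LP is unbounded below and its solution set is empty even though $s$ is feasible. So the ``only if'' direction fails for arbitrary $c$, and the lemma should either be read with ``feasible region'' in place of ``solution set'' or restricted to $c$ for which the minimum is attained ($c=0$ suffices for the feasibility test). You might also note, as a separate editorial point, that the statement's definitions $J=\cosupp(x)$, $I^{\pm}=\enscond{i}{\dotp{d_i}{x}\gtrless 0}$ are typos: they must be read as $J=\cosupp(s)$ and $I^{\pm}=\enscond{i}{s_i=\pm 1}$, since $x$ is the LP variable; your proof implicitly and correctly uses this reading.
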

We defer to \cref{sec:struct} how the choice of $c$ can leads to interesting properties.
Here, we wrote the problem as a linear program to put an emphasis that existing solvers allow us to test this property.
Note that finding all feasible signs is quite costly since it needs an exponential (in $p$) number of linear programs.

Thanks to \cref{cor:NSCextremality}, we have the definition
\begin{definition}
  We say that a sign $s \in \ens{-1,0,1}^p$ is \emph{pre-extreme} if it satisfies
	\[ (\Ker D^*)^\bot \cap (D \bar s)^\bot \cap \Ker D^*_{\bar J} = \{0\} , \]
	where $J = \cosupp(s)$, and is \emph{extreme} if it feasible and pre-extreme.
\end{definition}

Checking if a sign is pre-extreme boils down to compute the null-space of the matrix
\[ 
	B = 
	\begin{pmatrix}
		U & D \bar{s} & D_{\bar J}
	\end{pmatrix}^*
\]
where $U$ is a basis of the null-space of $D$.
In order to find the dimension of $\Ker B$, one can use either QR reduction or SVD (Singular Value Decomposition).
Here, we used an SVD approach.

We now illustrate these definitions in low dimension (it is known that the study of the number of faces is a very difficult task in general~\cite{mcmullen1970maximum,billera1980}), when $D$ correspond to the incidence matrix of a complete graph on $n=4$ vertices and $p=6$ edges.
Among $3^p = 729$ possible signs, only $75$ are feasible, and among them $14$ are extreme.
We report in \cref{fig:extremal4} the pattern of such signs up to centrosymmetry of the unit-ball, i.e., we only show 7 of the 14 extreme signs.
\begin{figure}[h]
	\centering
	\includegraphics[width=0.9\linewidth]{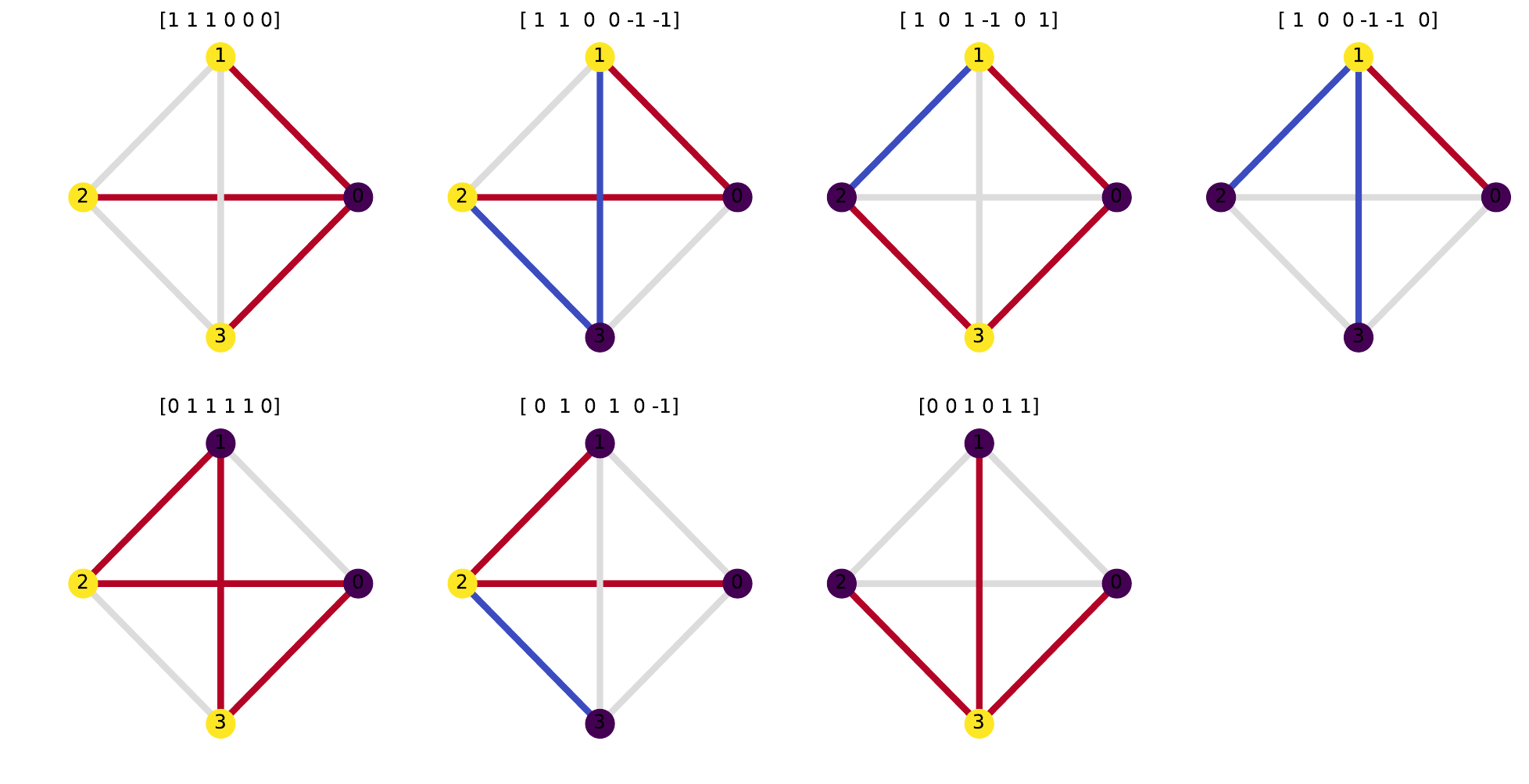}
	\caption{Extreme signs of a complete graph on $n=4$ vertices. Red edges correspond to positive sign, blue edges to negative sign and gray to 0.}
	\label{fig:extremal4}
\end{figure}
\section{The solution set of sparse analysis regularization}
\label{sec:struct}

This section is an application of the previous one towards the solution set of~\eqref{eq:regularization}.
Remark that a similar analysis can be performed (in a less challenging way) for the noiseless problem~\eqref{eq:regularizationNONOISE}.
In the first \cref{sec:inter}, we show that the solution set can be seen as a particular sub-polyhedron of the unit-ball.
Using this result, we derive several structural results in \cref{sec:cons-sol} on the solution set thanks to \cref{sec:unit}.
Finally, we show that arbitrary sub-polyhedra of the unit ball can be seen as solution set of~\eqref{eq:regularization} with a specific choice of parameters.

\subsection{The solution set as a sub-polyhedron of the unit ball}
\label{sec:inter}

This section studies the structure of the solution set of~\eqref{eq:regularization}, that we denote by $\Sol$:
\begin{equation*}
\Sol = \uargmin{x \in \R^n} \Ll(x) \eqdef \frac{1}{2} \| y - \Phi x \|_2^2 + \lambda \| D^* x \|_1 ,
\end{equation*}
where $y \in \R^q$, $\Phi \colon \R^n \rightarrow \R^q$, $D \colon \R^p \rightarrow \R^n$ is linear and $\lambda >0$.

\begin{theorem}
	\label{thm:sol-set=poly}
	The solution set of~\eqref{eq:regularization} is a nonempty convex polyhedron of the form
	\[
	X = \A \cap B_r
	\]
	with $r \ge 0$ and $\A$ an affine subspace such that $\emptyset \ne \A \cap B_r \subset \partial B_r$ and $\dir(\A) = \Ker \Phi$.
	Namely, if $x \in \Sol$, then $r = \| D^* x \|_1$ and $\A = x + \Ker \Phi$.
\end{theorem}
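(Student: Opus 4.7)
The plan is to establish three things in order: (i) $\Sol$ is nonempty, (ii) both $\Phi x$ and $\|D^*x\|_1$ are constant on $\Sol$, and (iii) this constancy, applied to an arbitrary $\bar x \in \Sol$, yields the claimed intersection structure.

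For nonemptiness, I would take a minimizing sequence $(x_k)$ and exploit the decomposition $\R^n = N \oplus N^\perp$ where $N = \Ker \Phi \cap \Ker D^*$. Both $\Phi$ and $D^*$ vanish on $N$, so replacing $x_k$ by $\Pi_{N^\perp} x_k$ leaves $\Ll(x_k)$ unchanged. Since $\Ll(x_k)$ is bounded, so are $\|y - \Phi x_k\|_2$ and $\|D^* x_k\|_1$, hence $\Phi x_k$ and $D^* x_k$ are bounded. The linear map $u \mapsto (\Phi u, D^* u)$ is injective on $N^\perp$, so the projected sequence is bounded. Extracting a convergent subsequence and invoking continuity of $\Ll$ produces a minimizer.

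The main (and only really nontrivial) step is constancy of $\Phi x$ on $\Sol$. Given $x_1, x_2 \in \Sol$ and $x_t = (1-t)x_1 + t x_2$, I would note that $t \mapsto \tfrac{1}{2}\|y - \Phi x_t\|_2^2$ is strictly convex on $[0,1]$ whenever $\Phi x_1 \neq \Phi x_2$, while $t \mapsto \lambda \|D^* x_t\|_1$ is convex. Their sum $\Ll(x_t)$ is then strictly convex in $t$, yet it is bounded above by $(1-t)\Ll(x_1)+t\Ll(x_2) = \min \Ll$ and bounded below by $\min \Ll$, hence constant on $[0,1]$; strict convexity is contradicted unless $\Phi x_1 = \Phi x_2$. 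Constancy of the quadratic term and of $\Ll$ then forces $\|D^* x\|_1$ to be constant on $\Sol$ as well.

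To conclude, pick any $\bar x \in \Sol$ and set $r = \|D^* \bar x\|_1 \ge 0$ and $\A = \bar x + \Ker \Phi$, so $\dir(\A) = \Ker \Phi$; both are independent of the choice of $\bar x$ by the previous step. The inclusion $\Sol \subseteq \A \cap \partial B_r \subseteq \A \cap B_r$ is immediate from that step. Conversely, any $x' \in \A \cap B_r$ satisfies $\Phi x' = \Phi \bar x$ and $\|D^* x'\|_1 \le r = \|D^* \bar x\|_1$, hence $\Ll(x') \le \Ll(\bar x) = \min \Ll$, forcing $x' \in \Sol$. This gives $\Sol = \A \cap B_r$, and as a byproduct $\A \cap B_r \subseteq \partial B_r$, which is exactly the statement.
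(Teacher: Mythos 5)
Your proof is correct, and its core — constancy of $\Phi x$ on the solution set via strict convexity of $u \mapsto \tfrac12\|y-u\|_2^2$, followed by the two inclusions $\Sol \subseteq \A \cap \partial B_r$ and $\A \cap B_r \subseteq \Sol$ — is exactly the paper's argument (its Lemma on constancy of $\mathcal L$ and the concluding paragraph of the proof). The only genuine divergence is the existence step: the paper computes the recession function of $\mathcal L$, shows that the recession cone and the lineality space both equal $\Ker D^* \cap \Ker \Phi$, and invokes Rockafellar's Theorem 27.1 to get nonemptiness; you instead take a minimizing sequence, project it onto $(\Ker \Phi \cap \Ker D^*)^\perp$ without changing the objective, and use injectivity of $u \mapsto (\Phi u, D^* u)$ on that complement to get boundedness and hence a convergent subsequence. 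Both arguments rest on the same structural fact — the only directions of non-coercivity are those in $\Ker \Phi \cap \Ker D^*$, along which $\mathcal L$ is invariant — but yours is self-contained and elementary, while the paper's buys, at no extra cost, the identification of the lineality space that it reuses later (e.g.\ in the compactness criterion $\Ker \Phi \cap \Ker D^* = \{0\}$ of its extremality results). One cosmetic remark: you should note in passing that $\A \cap B_r$ is indeed a polyhedron, which follows from the half-space representation of $B_1$ established earlier and homogeneity of the regularizer.
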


\begin{proof}
It is easy to see that the objective function $x \mapsto \Ll(x)$ is a nonnegative, convex, closed continuous function with full domain (in particular proper).
However, it is not coercive, hence existence of minimizers and compactness of the solution set are not straightforward.
Following~\cite[Chapter 8]{Rockafellar1970Convexanalysis}, the recession cone $R_\Ll$ of $\Ll$ is given by
\begin{equation*}
  R_\Ll \eqdef \enscond{z \in \RR^n}{\Ll_\infty(z) \leq 0} ,
\end{equation*}
where $\Ll_\infty$ is the recession function of $\Ll$ given by
\begin{equation*}
  \Ll_\infty(z) \eqdef \lim_{t \to + \infty} \frac{\Ll(t z)}{t} \in \RR \cup \ens{+\infty} .
\end{equation*}
It is clear that $R_\Ll$ is non-negative, hence the recession cone $R_\Ll$ is given by $R_\Ll = \enscond{z \in \RR^n}{\Ll_\infty(z) = 0}$.
The lineality space $L_\Ll$ is the subspace of $\RR^{n}$ formed by elements $d$ such that $d \in R_\Ll$ and $- d \in R_\Ll$, i.e, $L_\Ll = R_\Ll \cap (-R_\Ll)$.
The following lemma characterizes the structure of $R_\Ll$ and $L_\Ll$.
\begin{lemma}\label{lem:recession}
  The recession cone $R_\Ll$ and the lineality space $L_\Ll$ of $\Ll$ are given by
  \begin{equation*}
    R_\Ll = L_\Ll = \Ker D^* \cap \Ker \Phi .
  \end{equation*}
\end{lemma}
\begin{proof}
  Let $t > 0$ and $z \in \RR^{n}$.
  We have,
  \begin{align*}
    \frac{1}{t} \Ll(tz)
    &= \frac{1}{2t} \| y - \Phi tz \|_2^2 + \lambda \| D^* z \|_1 \\
    &= \frac{1}{2t} \| y \|_2^2 - \dotp{y}{\Phi z} + t \| \Phi z \|_2^2  + \lambda \| D^* z \|_1 .
  \end{align*}
  Hence,
  \begin{equation*}
    \Ll_\infty(z) = - \dotp{y}{\Phi z} + \lambda \| D^* z \|_1 + \iota_{\Ker \Phi}(z) .
  \end{equation*}
  In particular, $\Ll_\infty(z) = 0$ if, and only if, $z \in \Ker D^* \cap \Ker \Phi$.
  Since $\Ker D^* \cap \Ker \Phi$ is a subspace, we have $R_\Ll = L_\Ll$.
\end{proof}

The polyhedral structure of $\Sol$, as we will see, relies on the following lemma.

\begin{lemma} \label{lem:polyhedral}
	Let $C \subset \R^n$ be a nonempty convex set. Then $\mathcal L$ is constant on $C$ if and only if
	$\Phi$ and $\| D^* \cdot \|_1$ are constant on $C$.
\end{lemma}

\begin{proof}
	Assume that $\mathcal L$ is constant on $C$ and let $x_0 \in C$.
	Suppose that there exists $x_1 \in C$ such that $\Phi x_1 \ne \Phi x_0$
	and let $x = \frac{x_0+x_1}{2}$ (note that $x \in C$).
	Then by strict convexity of $u \mapsto \|y- u \|_2^2$,
	\begin{align*}
		\big\| y - \Phi x\big\|_2^2 &= \big\| y - (\frac{1}{2}\Phi x_0 + \frac{1}{2}\Phi x_1)\big\|_2^2 \\
		& <  \frac{1}{2}\big\| y - \Phi x_0\big\|_2^2 + \frac{1}{2}\big\| y - \Phi x_1\big\|_2^2 .
	\end{align*}
	Together with the convexity inequality of the $\ell^1$ norm:
	\[
	\big\| D^*x\big\|_1 \le \frac{1}{2}\big\|D^* x_0\big\|_1 + \frac{1}{2}\big\| D^* x_1\big\|_1,
	\]
	we get that $\mathcal L(x) < \frac{1}{2}\mathcal L(x_0) + \frac{1}{2}\mathcal L(x_1)$,
	which is in contradiction with $\mathcal L$ constant on $C$.
	Then $\Phi$ is constant on $C$ and thus $\| D^* \cdot \|_1$ too.
	The converse is straightforward.
\end{proof}

We add the following lemma, that gives locally the directions where $\| D^* \cdot \|_1$ is constant.

\begin{lemma} \label{lem:constant-norm}
	Let $\A$ be an affine subspace and $\bar x \in \A$. Then $\| D^* \cdot \|_1$ is constant in a neighborhood of $\bar x$ in $\A$ if and only if
\[
\dir(\A) \subset (D \bar s)^\bot \cap \Ker D^*_{\bar J}
\]
where $\bar s = \sign(D^*\bar x)$ and $\bar J = \cosupp(D^*\bar x)$.
\end{lemma}

\begin{proof}
	By definition, $\| D^* \cdot \|_1$ is constant in a neighborhood of $\bar x$ in $\A$ if and only if
	there exists $\varepsilon >0$ such that $B(\bar x,\varepsilon) \cap \A \subset \partial B_r$ with $r = \|D^*\bar x\|_1$.
	We denote by $C = B(\bar x,\varepsilon) \cap \A$ (note that $\bar x \in \ri(C)$).
	By \cref{prop:unique-face_C},
	$C \subset \partial B_r$ if and only if $C \subset \bar F$
	with $\bar F = B_r \cap \{ x \in \R^n : \langle D\bar s , x \rangle = 1 \}$ if $r >0$ and
	$\bar F = \Ker D^*$ if $r =0$.
	Since $\bar x \in \ri(\bar F)$, $C \subset \bar F$ if and only if $\dir(C) \subset \dir(\bar F)$.
	The result follows by noticing that $\dir(C) = \dir(\A)$ (e.g. by \cref{lem:ri-inter}) and $\dir(\bar F) = (D \bar s)^\bot \cap \Ker D^*_{\bar J}$
	(by \cref{lem:convex-subset} in the case $r>0$, obvious in the case $r=0$).
\end{proof}

Together, the previous two lemmas give the following.

\begin{corollary}
	Let $\A$ be an affine subspace and $\bar x \in \A$. Then $\mathcal L$ is constant in a neighborhood of $\bar x$ in $\A$ if and only if
	\[
	\dir(\A) \subset \Ker \Phi \cap (D \bar s)^\bot \cap \Ker D^*_{\bar J}
	\]
	where $\bar s = \sign(D^*\bar x)$ and $\bar J = \cosupp(D^*\bar x)$.
\end{corollary}

We now go back to the proof of \cref{thm:sol-set=poly}.
By \cref{lem:recession}, the recession cone and the lineality space of $\Ll$ coincide.
Then by~\cite[Theorem 27.1(a-b)]{Rockafellar1970Convexanalysis}, the solution set $\Sol$ is nonempty.
Since $\Ll$ is convex (and closed), the solution set is also convex (and closed).
Moreover, $\Ll$ is constant on $\Sol$.
Then by \cref{lem:polyhedral}, $\Phi$ and $\| D^* \cdot \|_1$ are constant on $\Sol$, i.e.
\[
X  \subset  (x + \Ker \Phi) \cap \partial B_r
\]
with $x \in X$ and $r = \|D^*x\|_1$.
But since $\Ll(x)$ is the minimum of $\Ll$,
\[
(x + \Ker \Phi) \cap B_r \subset X.
\]	
It follows that 
\[
X  =  (x + \Ker \Phi) \cap B_r
\]
with $(x + \Ker \Phi) \cap B_r \subset \partial B_r$, as it was to be proved.

\end{proof}

\subsection{Consequence results on the solution set}
\label{sec:cons-sol}

In this section, we apply the results on sub-polyhedra of the unit ball (\cref{sec:subpolyhedra}) to the solution set $X$ of~\eqref{eq:regularization}.

\begin{proposition}\label{prop:cons-sol-faces}
	Let $\bar x \in \ri(X)$, $\bar s = \sign(D^* \bar x)$,
	and $\bar F = B_r \cap \{ x \in \R^n : \langle D\bar s , x \rangle = r \}$ with $r= \| D^* \bar x\|_1$. Then
	\[
	X = (\bar x + \Ker \Phi) \cap \bar F.	
	\]
	It follows that
	\begin{align*}
	X &=  (\bar x + \Ker \Phi) \cap \{ x \in \R^n : \sign(D^*x) \preceq \bar s \},  \\
	\ri(X) &  = (\bar x + \Ker \Phi)  \cap \{ x \in \R^n : \sign(D^*x) = \bar s \}, \\
	\dir(X) &  = \Ker \Phi \cap \Ker D^*_{\bar J} \text{ (where } \bar J = \cosupp(\bar s) \text{)}.
	\end{align*}
	Moreover, the faces of $X$ are exactly the sets of the form \( \{x \in X : J \subset \cosupp(D^*x) \}\)
	with $\bar J \subset J$; their relative interior is given by \(\{x \in X : J = \cosupp(D^*x) \}\)
	and their direction by \(\Ker \Phi \cap \Ker D^*_J\).
\end{proposition}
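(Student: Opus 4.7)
The backbone is Proposition~\ref{prop:itself} applied to the affine space $\A = \bar x + \Ker \Phi$. By Theorem~\ref{thm:sol-set=poly} one has $X = \A \cap B_r$ with $r = \|D^* \bar x\|_1$ and $\emptyset \ne \A \cap B_r \subset \partial B_r$, so that the assumptions of Proposition~\ref{prop:itself} are met (all the results of Section~\ref{sec:unit} carry over from $B_1$ to $B_r$ by one-homogeneity of $\|D^*\cdot\|_1$).

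First, I will note that because $\bar x \in \ri(X)$, Corollary~\ref{cor:sign_map} forces $\sign(D^*\bar x) = \max_{x \in X}\sign(D^*x) = \bar s$. Then Proposition~\ref{prop:itself} yields directly the identity $X = (\bar x + \Ker \Phi) \cap \bar F$, and the three displayed expressions for $X$, $\ri(X)$, and $\dir(X)$ are simply Lemma~\ref{lem:affine_compo}~(iii) specialized to $\A = \bar x + \Ker \Phi$, whose direction is $\Ker \Phi$.

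For the face description, I will invoke Proposition~\ref{prop:faces_of_inter}: since $X = \A \cap B_r$ is a convex polyhedron, each of its faces is exposed in $\aff(X)$ and is of the form $G = \A \cap F$, where $F = B_r \cap \{x : \langle Ds, x\rangle = r\}$ is the exposed face of $B_r$ attached to the sign $s = \max_{x \in G}\sign(D^*x)$. Applying Lemma~\ref{lem:affine_compo}~(iii) a second time expresses $G$, $\ri(G)$, and $\dir(G)$ in terms of $s$ and $J := \cosupp(s)$; in particular $\dir(G) = \Ker \Phi \cap \Ker D^*_J$. Picking any $x \in \ri(G) \subset X$ and using $\sign(D^*x) \preceq \bar s$ (from $X \subset \bar F$) gives $s \preceq \bar s$, i.e.\ $\bar J \subset J$; conversely, any $J \supset \bar J$ determines an $s \preceq \bar s$ that produces such a face.

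The only delicate step is to rewrite the sign-inclusion description $G = \A \cap \{x : \sign(D^*x) \preceq s\}$ as the cosupport-inclusion description $\{x \in X : J \subset \cosupp(D^*x)\}$. Here the key point, already used in the proof of Proposition~\ref{prop:inclusion-sign}, is that any $x \in X$ satisfies $\sign(D^*x) \preceq \bar s$, so $\sign(D^*x)$ and $s$ are consistent in the sense of Definition~\ref{def:sign_order}; under consistency, Remark~\ref{rmk:sign_order} turns $\sign(D^*x) \preceq s$ into $J \subset \cosupp(D^*x)$, and the analogous argument with equality gives $\ri(G) = \{x \in X : J = \cosupp(D^*x)\}$. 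This sign-vs-cosupport bookkeeping is the one genuinely subtle step, but it is entirely handled by the preliminaries of Section~\ref{sec:sign-prelem}.
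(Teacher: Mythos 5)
Your proposal follows essentially the same route as the paper: \cref{thm:sol-set=poly} plus \cref{prop:itself} for the identity $X = (\bar x + \Ker\Phi)\cap\bar F$, \cref{lem:affine_compo}~(iii) for the three displayed formulas, and \cref{prop:faces_of_inter} together with the consistency bookkeeping of \cref{prop:inclusion-sign} for the face description. Two small points to tighten. First, your blanket claim that the results of \cref{sec:unit} carry over to $B_r$ by one-homogeneity fails at $r=0$: there $B_0 = \Ker D^*$ is not a dilate of $B_1$, so the machinery does not literally apply; the paper disposes of this case in one sentence ($\bar s = 0$, $\bar F = B_0 = \Ker D^*$, everything trivial) before assuming $r>0$, and you should do the same. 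Second, not every face of the polyhedron $X$ is exposed in $\aff(X)$ — the improper faces $\emptyset$ and $X$ itself are not — so \cref{prop:faces_of_inter} does not cover them; the paper treats them separately ($G=\emptyset$ with $F=\emptyset$, $G=X$ with $F=\bar F$), and your "conversely" direction should likewise verify, as the paper does via supporting hyperplanes and \cref{prop:inclusion-sign}, that every $J\supset\bar J$ actually yields a face (possibly empty or all of $X$), not merely that every face yields such a $J$. Neither point changes the substance of the argument.
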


\begin{proof}
	First note that the results are trivial in the case $r=0$, as $\bar s = 0$ and $\bar F = B_r = \Ker D^*$.
	Therefore we consider the case $r>0$.
	The first statement follows from \cref{thm:sol-set=poly} and \cref{prop:itself},
	and the second one from \cref{lem:affine_compo}~(iii).
	For the last statement, note that $G$ is a face of $X$ if and only if
	$G = (\bar x + \Ker \Phi) \cap F$ with $F$ a face of $B_r$ such that $F \subset \bar F$.
	Indeed, the direct implication holds for $G = \emptyset$ with $F = \emptyset$,
	for $G = X$ with $F = \bar F$, and for $G$ exposed in $\aff(X)$ by \cref{prop:faces_of_inter}.
	Conversely, let $F = \emptyset$ or $B_r \cap \mathcal H$ (with $\mathcal H$ a supporting hyperplane) be a face of $B_r$.
	Then $(\bar x + \Ker \Phi) \cap F = \emptyset$ or $X \cap \mathcal H$ is a face of $X$.
	The conclusion follows from \cref{prop:inclusion-sign}.
\end{proof}

Thanks to \cref{prop:cons-sol-faces}, we can draw several conclusions.
In particular the role of $\bar{s}$ and $\bar{J}$ allows us to derive properties of the solution set.

The sign $\bar s$ is shared by all the interior solutions of~\eqref{eq:regularization},
which are also maximal solutions ($\bar s = \max_{x \in X} \sign(D^* x)$).
Such a solution can be obtained numerically by the algorithm described in \cite{2017arXiv170300192B}.
Future work should include an analysis of the behavior of more common algorithms such as first-order proximal methods.
	
The knowledge of $\bar s$ (or of $\bar J$ which is the minimal cosupport)
gives the dimension of the solution set~\eqref{eq:regularization} as $\dim(X) = \dim( \Ker \Phi \cap \Ker D^*_{\bar J})$
(up to determining the dimension of the null-space of the matrix
$A  = 	\begin{pmatrix}
			\Phi^* & D_{\bar J}
		\end{pmatrix}^*$,
which again can be done by QR reduction or SVD).
For instance, taking the example of~\cite{2017arXiv170300192B} in Section 6.3, 
\begin{equation}\label{eq:setting3d}
	D^* =
	\left(
	  \begin{array}{lll}
		1& 1& 0\cr 1&0&1\cr2&1&1
	  \end{array}
	\right), \quad
	\Phi=
	\left(
	  \begin{array}{lll}
		1& 1& 1\cr 3&1&1\cr\sqrt{2}&0&0
	  \end{array}
	\right), \quad
	y=\left(
	  \begin{array}{l}
		1\cr1\cr0
	  \end{array}
	\right)
	\qandq
	\lambda=\frac{1}{2} ,
\end{equation}
we can prove that $X = \conv{(0\quad \frac{1}{2} \quad 0)^*, (0\quad 0 \quad \frac{1}{2})^*}$.
But running the interior-point method described in~\cite{2017arXiv170300192B} leads to the specific solution (up to numerical error) $\bar{x} = (0\quad \frac{1}{4} \quad \frac{1}{4})^*$.
In this case, the matrix $A$ reduces to $A = \Phi$ since $\bar{J} = \emptyset$.
Thus, $\dim(X) = \dim(\Ker \Phi) = 1$.

In the previous formula, $\dim(X)$ is decreasing w.r.t. $\bar J$;
it somehow quantifies the tautology according to which the sparser the less sparse solution, the fewer solutions.

Together with \cref{lem:linear_sign}, the previous proposition gives
the following half-space representation of the solution set $X$ of~\eqref{eq:regularization}
(one could of course give similar representations of its faces).
\begin{corollary}\label{cor:lin-rep-sol}
	Let $\bar x \in \ri(X)$, $\bar s = \sign(D^* \bar x)$, $\bar J = \cosupp(\bar s)$ and $\bar I = \supp(\bar s)$.
	Then 
	\[
	X = \{ x \in \R^n : \Phi x = \Phi \bar x, D^*_{\bar J} x = 0,  \diag(\bar s_{\bar I})D^*_{\bar I} x \ge 0 \}.
	\]
\end{corollary}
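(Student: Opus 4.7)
The plan is to assemble the representation by combining the two previous results that have already done the heavy lifting. From \cref{prop:cons-sol-faces} applied to the chosen $\bar x \in \ri(X)$, I have the identity
\[
X = (\bar x + \Ker \Phi) \cap \{ x \in \R^n : \sign(D^*x) \preceq \bar s \}.
\]
So the proof reduces to rewriting each of these two constraints in the desired linear/affine form.

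For the affine constraint, I note that $x \in \bar x + \Ker \Phi$ is literally the same as $\Phi x = \Phi \bar x$, so this side is immediate. For the sign constraint, I invoke \cref{lem:linear_sign} with the fixed sign $\bar s$: it gives
\[
\{ x \in \R^n : \sign(D^*x) \preceq \bar s \} = \{ x \in \R^n : D_{\bar J}^* x = 0 \text{ and } \diag(\bar s_{\bar I}) D_{\bar I}^* x \geq 0 \},
\]
using exactly the indexing $\bar J = \cosupp(\bar s)$ and $\bar I = \supp(\bar s)$ fixed in the statement. Intersecting the two descriptions yields the claimed representation.

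There is no real obstacle here; the content has been absorbed into the earlier lemmas. The only thing worth a line of commentary is that $\bar s$ does not depend on the particular choice of $\bar x \in \ri(X)$ (it is the maximum of $\sign(D^*\cdot)$ on $X$ by \cref{cor:sign_map}, or equivalently by \cref{prop:cons-sol-faces}), so the right-hand side is an intrinsic description of $X$ despite being phrased through $\bar x$.
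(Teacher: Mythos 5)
Your proof is correct and follows exactly the route the paper intends: the paper states this corollary as an immediate consequence of combining \cref{prop:cons-sol-faces} (giving $X = (\bar x + \Ker \Phi) \cap \{ x : \sign(D^*x) \preceq \bar s \}$) with the linear rewriting of the sign condition in \cref{lem:linear_sign}. Your added remark that $\bar s$ is independent of the choice of $\bar x \in \ri(X)$ is a correct and worthwhile observation, consistent with \cref{cor:sign_map}.
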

This result can be used numerically.
Indeed, it provides a linear characterization of the solution set up to the knowledge of a maximal solution.
In the same spirit of~\cite{tibshirani2013}, we can derive bounds on the coefficients (both in the signal domain or in the dictionary domain).
For instance, finding the biggest $i$-coefficient boils down to solve the linear program
\begin{equation*}
	\umax{x \in X} \dotp{x}{e_i} ,
\end{equation*}
where $e_i$ the is the $i$th canonical vector.
Thus, we can describe in a similar fashion which component are dispensable following the vocabulary introduced in~\cite{tibshirani2013}.

We can also apply \cref{lem:affine_compo} to an arbitrary solution (not necessarily interior).
This result is useful when obtaining a solution computed from any algorithm without guarantees on its maximality.
\begin{proposition} \label{prop:rank_deficiency}
	Let $x \in X$, $s = \sign(D^* x)$,
	$F = B_r \cap \{ x \in \R^n : \langle D s , x \rangle = r \}$ with $r= \| D^*x\|_1$. Then
	\[
	G = (x + \Ker \Phi) \cap F
	\]
	is the smallest face of $X$ such that $x \in G$ and the unique face of $X$ such that $x \in\ri(X)$.
	It satisfies
	\begin{align*}
		G  &=  (x + \Ker \Phi) \cap \{ x \in \R^n : \sign(D^*x) \preceq s \},  \\
		\ri(G) &  = (x + \Ker \Phi)  \cap \{ x \in \R^n : \sign(D^*x) = s \}, \\
		\dir(G) &  = \Ker \Phi \cap \Ker D^*_{J} \text{ (where } J = \cosupp(s) \text{)}.
	\end{align*}
\end{proposition}
Note in particular that $\dim(\Ker \Phi \cap \Ker D^*_{J})$ is always the dimension of a subset of solutions.
When $D = \Id$, then $\Ker \Phi \cap \Ker D^*_{J} = \Ker \Phi_{I}$ (with $I = [n]\setminus J$ the support of $x$)
the rank deficiency of $\Phi_I$ (the difference between the size of the support $I$ and the rank of $\Phi_I$)
is a lower bound of the dimension of the solution set. See \cref{sec:real_dataset} for an illustration on a real dataset.

We end this section with a characterization of the compactness of $X$ and of its extreme points,
as well as a sufficient condition for uniqueness knowing a solution.

\begin{proposition}\label{prop:conn-litt}
	The solution set $X$ of~\eqref{eq:regularization} admits extreme points if and only if
	it is compact (i.e. is a convex polytope), if and only if \[ \Ker \Phi \cap \Ker D^* = \{0\}.\]
	A solution $x \in X$ is an extreme point (i.e. $x \in \ext(X)$) if and only if, denoting by $J=\cosupp(D^*x)$,
	\[ \Ker \Phi \cap \Ker D_J^* = \{0\}.\]
\end{proposition}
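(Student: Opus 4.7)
The plan is to reduce this proposition to a direct application of \cref{prop:NSCextremality}, via the parametrization of $X$ provided by \cref{thm:sol-set=poly}. First I would fix any $\bar x \in X$ and set $r = \|D^* \bar x\|_1 \geq 0$, so that \cref{thm:sol-set=poly} gives $X = \A \cap B_r$ with $\A = \bar x + \Ker \Phi$, $\dir(\A) = \Ker \Phi$, and $\A \cap B_r \subset \partial B_r$. This places $X$ in exactly the setting covered by \cref{prop:NSCextremality}, up to replacing the unit ball by a ball of radius $r$.

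In the main case $r > 0$, I would lift \cref{prop:NSCextremality} from $B_1$ to $B_r$ by homogeneity: since $\|D^* \cdot\|_1$ is $1$-homogeneous, the bijection $x \mapsto x/r$ sends $\A \cap B_r$ onto $(\A/r) \cap B_1$, preserving extreme points, compactness, supports and cosupports, and the subspaces $\Ker \Phi$, $\Ker D^*$, $\Ker D^*_J$. Moreover the hypothesis $\A \cap B_r \subset \partial B_r$ passes to $(\A/r) \cap B_1 \subset \partial B_1$, and $\dir(\A/r) = \dir(\A) = \Ker \Phi$. Applying \cref{prop:NSCextremality} to $\A/r$ then gives at once that $X$ admits extreme points iff it is compact iff $\dir(\A) \cap \Ker D^* = \Ker \Phi \cap \Ker D^* = \{0\}$, and its sharper pointwise criterion (the one valid when the affine slice is contained in the sphere) yields $x \in \ext(X) \Leftrightarrow \Ker \Phi \cap \Ker D^*_{J} = \{0\}$ for $J = \cosupp(D^* x)$.

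The degenerate case $r = 0$ would be dispatched directly: then $B_0 = \Ker D^*$ and $X = (\bar x + \Ker \Phi) \cap \Ker D^*$ is an affine subspace whose lineality (and recession) space is $\Ker \Phi \cap \Ker D^*$. Hence $X$ admits an extreme point iff it reduces to a singleton iff it is compact iff $\Ker \Phi \cap \Ker D^* = \{0\}$; any such solution $x$ satisfies $D^* x = 0$, so $J = \nint{p}$ and $\Ker D^*_J = \Ker D^*$, matching the pointwise criterion with the global one. No real obstacle is expected: the whole statement is essentially a translation of \cref{prop:NSCextremality} through the description of $X$ in \cref{thm:sol-set=poly}, with only the mild scaling argument that promotes the unit-ball result to arbitrary radii requiring any attention.
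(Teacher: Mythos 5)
Your proposal is correct and follows essentially the same route as the paper: the paper's proof is precisely the reduction of the statement to \cref{prop:NSCextremality} via the representation $X = (\bar x + \Ker\Phi)\cap B_r$ from \cref{thm:sol-set=poly}, with the case $r=0$ dismissed as trivial. Your additional care with the homogeneity rescaling from $B_1$ to $B_r$ and the explicit treatment of $r=0$ only fills in details the paper leaves implicit (it notes earlier that all results on $B_1$ transfer to sub-level sets of positive value by one-homogeneity).
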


\begin{proof}
	Recall that $X = (\bar x + \Ker \Phi) \cap B_r$ by \cref{thm:sol-set=poly}.
	The result is trivial in the case $r=0$ and is the transcription of \cref{prop:NSCextremality} in the case $r>0$.
\end{proof}

\begin{corollary}
	Let $x \in X$ and $J=\cosupp(D^*x)$. If $x$ is the unique solution of~\eqref{eq:regularization} (i.e. $X = \{x \}$),
	then $\Ker \Phi \cap \Ker D_J^* = \{0\}$.
\end{corollary}

The condition $\Ker \Phi \cap \Ker D^* = \{0\}$ is equivalent to the recession cone $R_\Ll$ being reduced to $\{0\}$,
which is known to be equivalent to the compactness of the solution set $X$ of~\eqref{eq:regularization},
see e.g. \cite[Theorem 27.1(d)]{Rockafellar1970Convexanalysis}.
Note that this is the condition denoted by ($H_0$) and assumed to hold all throughout~\cite{6380620} or~\cite{vaiter2013local}.
This condition can be specified to our examples:
\begin{itemize}
\item When $D$ is the identity $D = \Id$, it is automatically satisfied ;
\item When $D=D_{TV}$, this condition is satisfied as soon as $\Phi$ does not cancel on constant vectors ;
\item When $D$ is the incidence matrix of a graph, observe that this condition reduces to the fact that $\Phi$ should not be constant on the set of constant vectors in each connected component.
\end{itemize}
		
The condition $\Ker \Phi \cap \Ker D_J^* = \{0\}$ is the one denoted by ($H_J$) and required in \cite{vaiter2013local}
at a given solution in order to undertake a sensitivity analysis.
It turns out from the present study that such solutions are precisely the extreme points of the solution set.
In~\cite{vaiter2013local}, an iterative procedure is proposed in section A.3 to construct such an extreme point.
Alternatively, if one has the knowledge that the maximal solution is quite sparse, then an exhaustive test can be performed in a similar fashion than \cref{sec:testing}.

Going back to the setting proposed in~\cref{eq:setting3d}, we observe that thanks to \cref{prop:conn-litt}, we know that $X$ is compact since $\Ker \Phi$ and $\Ker D^*$ intersect trivially, and we can obtain the extreme points by observing that there is three feasible signs $(1,0,1)$, $(0,1,1)$ and $(1,1,1)$. Only the first two leads to a cosupport $J$ such that $\Ker \Phi$ and $\Ker D_J^*$ intersect trivially.
Using \cref{cor:lin-rep-sol}, one can use any linear solver from these two signs to obtain associated two extreme points of the solution set, i.e.,  $\ext(X) = \ens{(0\quad \frac{1}{2} \quad 0)^*, (0\quad 0 \quad \frac{1}{2})^*}$.

\subsection{Discussion of uniqueness conditions} \label{sec:uniqueness}

We can derive from Proposition~\ref{prop:cons-sol-faces} a necessary and sufficient condition for uniqueness.
Indeed \eqref{eq:regularization} admits a unique solution if and only if $\dir X = \{0\}$.
Therefore if one knows the minimal cosupport $\bar J$ (that is the cosupport of a maximal solution $\bar x$),
then \eqref{eq:regularization} admits a unique solution if and only $\Ker \Phi \cap \Ker D^*_{\bar J} = \{0\}$.
Recall that a maximal solution can be obtained numerically by the algorithm described in \cite{2017arXiv170300192B},
so together with QR reduction or SVD, uniqueness can be checked numerically.

In the case of Lasso problem ($D=\Id$), this necessary and sufficient condition becomes 
$\Ker \Phi_{\bar I} = \{0\}$ with $\bar{I}$ the maximal support.
It is slightly weaker than the first sufficient condition for uniqueness
in the seminal paper of Tibshirani \cite[Lemma~2]{tibshirani2013}.
Indeed, the latter is $\Ker \Phi_{\mathcal E} = \{0\}$ with $\mathcal E$ the so-called
\emph{equicorrelation set}, which always contains the maximal support $\bar I$
and coincides with it for almost every $y$ \cite[Lemma~13]{tibshirani2013}.
In this same paper, Tibshirani proves that his first sufficient condition
is satisfied if the columns of $\Phi$ are in \emph{general position}
\cite[Lemma~3]{tibshirani2013}, which is the case with probability one
if their entries are drawn from a continuous distribution \cite[Lemma~4]{tibshirani2013}.
Note that we make no such assumption in our paper.
The fact that non-uniqueness may arise when $\Phi$ has discrete entries
is confirmed by Ewald and Schneider \cite[Theorem~14]{ewald2020distribution}:
there exists $y$ for which the Lasso problem has a non-unique solution
if and only if $\Im(\Phi^*)$ intersects a face of $[-1,1]^n$ of dimension strictly smaller than
the dimension of $\Ker(\Phi)$; it is in particular the case when $\Phi$ has a row
with only $\pm 1$ entries and a non-trivial nullspace.

In the general case, the derivation of uniqueness conditions 
in the paper of Ali and Tibshirani \cite{2018arXiv180507682A} is complicated
by the fact that there is no (unique) equicorrelation set but several \emph{boundary sets} $\mathcal B$
for which $\Ker \Phi \cap \Ker D^*_{[n]\setminus \mathcal{B}} = \{0\}$ needs to be satisfied.
The authors introduced a notion of \emph{$D^*$-general position} which,
together with the condition $\Ker \Phi \cap \Ker D^*= \{0\}$
(equivalent to the compactness of the solution set by Proposition~\ref{prop:cons-sol-faces}),
implies uniqueness for almost every $y$ \cite[Lemma~6]{2018arXiv180507682A}.
These conditions are satisfied with probability one when the entries of $\Phi$
are drawn from a continuous distribution and $n \le q$ or $n>q$ and 
$\dim(\Ker D^*) \le q$ \cite[Lemmas~7,8]{2018arXiv180507682A} but are not assumed in our work.
Finally, the necessary and sufficient condition of Ewald and Schneider
for the uniqueness of the Lasso minimizer above has been generalized by Schneider, Tardivel et al.
to regularizations by polyhedral norms \cite{schneider2020geometry} 
and then by polyhedral gauges \cite{tardivel:hal-03262087}.
The latter framework includes generalized Lasso for which their condition is
that there exists $y$ such that \eqref{eq:regularization} has a non-unique solution
if and only if $\Im(\Phi^*)$ intersects a face of $D[-1,1]^n$
(the image of the hypercube by $D$, which is a polyhedron in $\R^q$)
of dimension strictly smaller than the dimension of $\Ker(\Phi)$.

\subsection{Arbitrary sub-polyhedra of the unit ball as solution sets}

We have the following converse of \cref{thm:sol-set=poly}.

\begin{theorem}\label{thm:arb}
	Let $r \ge 0$ and $\A$ be an affine subspace such that $\emptyset \ne \A \cap B_r \subset \partial B_r$.
	Then there exist $\Phi$, $y$ and $\lambda > 0$ such that the solution set of \eqref{eq:regularization} is $\Sol = \A \cap B_r$
	and $\Ker \Phi = \dir(\A)$.
\end{theorem}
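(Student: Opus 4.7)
The plan is to exhibit $\bar x \in \ri(\A \cap B_r)$ (which is nonempty by assumption), choose any $\Phi$ with $\Ker \Phi = \dir(\A)$---for instance the orthogonal projection onto $\dir(\A)^\perp$---and then engineer $y$ so that $\bar x$ satisfies the Fermat optimality condition for~\eqref{eq:regularization}, namely
\[
  \Phi^*(y - \Phi \bar x) \in \lambda D\, \partial \|\cdot\|_1(D^* \bar x).
\]
Once this is achieved, \cref{thm:sol-set=poly} applied at the solution $\bar x$ gives $\Sol = (\bar x + \Ker \Phi) \cap B_{r^*}$ for some $r^* \geq 0$ with $(\bar x + \Ker\Phi)\cap B_{r^*}\subset\partial B_{r^*}$; but $\bar x \in \Sol$ forces $r^* = \|D^* \bar x\|_1 = r$, and $\bar x + \Ker \Phi = \bar x + \dir(\A) = \A$, so $\Sol = \A \cap B_r$ with $\Ker\Phi = \dir(\A)$ as required.

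The heart of the argument is the construction of a \emph{dual certificate}, i.e., some $\bar \eta \in \partial \|\cdot\|_1(D^* \bar x)$ with $D \bar \eta \in \dir(\A)^\perp = \Im \Phi^*$. The key observation is that $\bar x$ minimizes $\|D^* \cdot\|_1$ over the affine space $\A$: indeed, the assumption $\A \cap B_r \subset \partial B_r$ precisely says that no $x \in \A$ satisfies $\|D^* x\|_1 < r$, so $r = \min_{x \in \A} \|D^* x\|_1 = \|D^* \bar x\|_1$. Fermat's rule at $\bar x$, combined with the sum rule and the linear chain rule---both valid here since $\|\cdot\|_1$ has full domain and $\A$ is an affine set---yields
\[
  0 \in D\, \partial \|\cdot\|_1(D^* \bar x) + N_\A(\bar x) = D\, \partial \|\cdot\|_1(D^* \bar x) + \dir(\A)^\perp,
\]
which is exactly the existence of the desired $\bar \eta$.

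To finish, fix any $\lambda > 0$, pick $w \in \R^q$ with $\Phi^* w = D \bar \eta$ (possible since $D \bar \eta \in \Im \Phi^* = (\Ker\Phi)^\perp$), and set $y = \Phi \bar x + \lambda w$. Then $\Phi^*(y - \Phi \bar x) = \lambda D \bar \eta \in \lambda D\, \partial \|\cdot\|_1(D^* \bar x)$, so $\bar x \in \Sol$, and the conclusion follows by \cref{thm:sol-set=poly} as outlined above. The case $r = 0$ is absorbed into this scheme by taking $\bar\eta = 0$ (hence $w = 0$ and $y = \Phi \bar x$), and becomes a direct check: any $x$ with $\Ll(x) = 0$ satisfies $\Phi x = y$ (forcing $x \in \A$) and $D^* x = 0$ (forcing $x \in B_0$). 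The only real obstacle is the existence of the dual certificate; once it is in hand, the remaining steps---lifting $D\bar\eta$ through $\Phi^*$ and identifying $\Sol$ via \cref{thm:sol-set=poly}---are routine verifications.
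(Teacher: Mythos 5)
Your proposal is correct, and its overall architecture coincides with the paper's: fix $\bar x \in \A \cap \partial B_r$, build $\Phi$ with $\Ker \Phi = \dir(\A)$, produce a subgradient $u \in \partial\|\cdot\|_1(D^*\bar x)$ with $Du \in \Im \Phi^* = \dir(\A)^\perp$, lift it to $\beta$ (your $w$) with $\Phi^*\beta = Du$, set $y = \Phi\bar x + \lambda\beta$, check the first-order condition at $\bar x$, and conclude via \cref{thm:sol-set=poly}. Where you genuinely diverge is the one nontrivial step, the existence of the dual certificate. The paper gets it from its \cref{lem:dirA}~(ii), a polyhedral cone duality statement asserting $\dir(\A)^\perp \cap \sum_{s \succeq \bar s}\R_+ Ds \neq \{0\}$, and then takes $u = \sum_s \alpha_s s$ explicitly as a convex combination of the signs $s \succeq \bar s$ (whose convex hull is exactly $\partial\|\cdot\|_1(D^*\bar x)$). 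You instead observe that the hypothesis $\emptyset \neq \A \cap B_r \subset \partial B_r$ means precisely that $\bar x$ minimizes $\|D^*\cdot\|_1$ over $\A$, and you extract the certificate from Fermat's rule plus the sum and chain rules (all qualification conditions being trivially met since $\|\cdot\|_1$ is finite everywhere and $N_\A(\bar x) = \dir(\A)^\perp$). The two certificates range over the same set; the difference is purely in how existence is justified. Your route is shorter and more conceptual, and it bypasses \cref{lem:dirA}~(ii) entirely; the paper's route is more constructive, exhibiting the certificate in the combinatorial form $\sum_s \alpha_s s$ that is consistent with the sign-pattern machinery developed in \cref{sec:unit}. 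Your handling of $r=0$ via $\bar\eta = 0$ and the final identification of $\Sol$ through \cref{thm:sol-set=poly} are both sound.
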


\begin{proof}
	We first consider the case $r > 0$.
	Let $\bar x \in \A \cap \partial B_r$ and $\bar s = \sign(D^*\bar x)$. 
	We define $\Phi$, $y$ and $\lambda$ as follows:
	we consider $a_1, \ldots, a_m$ a basis of $\dir(\A)^\bot$ (with $m$ the dimension of this subspace)
	and set $\Phi = \left( a_1| \cdots |a_m \right)^*$; then $\Im(\Phi^*) = \dir(A)^\bot$ and $\Ker(\Phi) = \dir(\A)$.
	It follows from \cref{lem:dirA}~(ii) that $\Im(\Phi^*) \cap \left(\sum_{s \succeq \bar s} \R_+ Ds \right) \ne \{0\}$.
	Let $\beta \in \R^q$ and $\alpha_s \ge 0$ for all $s \succeq \bar s$ be such that 
	\[
	\Phi^* \beta = \sum_{s \succeq \bar s} \alpha_s Ds \ne 0;
	\]
	we can assume (by normalizing) that $\sum_{s \succeq \bar s} \alpha_s = 1$.
	We define $u = \sum_{s \succeq \bar s} \alpha_s s$, so that $Du= \Phi^*\beta$.
	Note also that $u_{\bar I} = \bar s_{\bar I}$ and $ \| u_{\bar J}\|_\infty \le 1$
	(with $\bar I =\supp(D^* \bar x)$ and $\bar J = \cosupp(D^* \bar x)$).
	We now fix any $\lambda >0$ and set $y = \Phi \bar x + \lambda \beta$.
	We denote as always by $X$ the solution set of \eqref{eq:regularization}.
	By construction, we have
	\[
	\Phi^* (\Phi \bar x - y) + \lambda Du = 0.
	\]
	It implies that $0 \in \partial \mathcal L(\bar x)$ since $\partial\big( \| D^* \cdot \|_1 \big)(\bar x)= D\partial\big( \|  \cdot \|_1 \big)(D^*\bar x)$
	and $u \in \partial\big( \|  \cdot \|_1 \big)(D^*\bar x)$
	(see e.g. \cite{hiriart2013convex} or \cite{vaiter2013local}), and thus $\bar x \in X$.
	It follows from \cref{thm:sol-set=poly} that $X = (\bar x + \Ker \Phi) \cap B_{\| D^*\bar x \|_1}$.
	But $\bar x + \Ker \Phi = \A$ since $\bar x \in \A$ and $\Ker \Phi = \dir(\A)$,
	and $\| D^*\bar x \|_1 = r$ since $\bar x \in \partial B_r$,
	which concludes the proof of the case $r>0$.
	
	We now treat the case $r=0$, for which $B_r = \partial B_r = \Ker D^*$.
	We define $\Phi$ as in the previous case, $\lambda >0$ arbitrarily, and $y = \Phi \bar x$ for some $\bar x \in \A \cap \Ker D^*$.
	Then again, $\Phi^* (\Phi \bar x - y) + \lambda Du = 0$, here with $u=0$ ($\| u\|_\infty \le 1$).
	It follows that $\bar x \in X$, and then that $X=\A \cap B_r$ as before.
\end{proof}
	
If we relax the condition $\Ker \Phi = \dir(\A)$, we can get rid of the assumption $\emptyset \ne \A \cap B_r \subset \partial B_r$
and at the same time choose the exposed face $F$ so that $\A \cap F$ is a solution set (thus of arbitrary dimension).
This is the object of the next proposition.

\begin{proposition} \label{prop:arbitrary_sol}
	Let $r > 0$, $F$ be an exposed face of $B_r$ and $\A$ be an affine subspace intersecting $F$.
	Then there exist $\Phi$, $y$ and $\lambda > 0$ such that the solution set of \eqref{eq:regularization} is $\Sol = \Aa \cap F$
	(and $\Ker \Phi \subset \dir(\A)$).
\end{proposition}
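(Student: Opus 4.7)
The plan is to reduce this statement directly to the previous theorem (\cref{thm:arb}) by slicing the affine space $\A$ with the supporting hyperplane that exposes $F$. The key observation is that the hypothesis that $F$ is an exposed face forces $F$ to lie inside a hyperplane on which the $\ell^1$-norm of $D^*\cdot$ equals $r$, so one can artificially upgrade $\A \cap F$ to a set of the form ``affine subspace intersected with $B_r$, all lying in $\partial B_r$''.

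First I would invoke \cref{prop:face2sign} to write
\[
F = B_r \cap H, \qquad H = \{ x \in \R^n : \langle D\bar s, x \rangle = r\},
\]
where $\bar s = \max_{x \in F} \sign(D^* x)$ is the feasible sign attached to $F$. Then I would set $\A' = \A \cap H$, which is an affine subspace (possibly of smaller dimension than $\A$). The assumption that $\A$ meets $F$ gives $\A \cap F \subset \A'$, so $\A'$ is nonempty, and moreover
\[
\A' \cap B_r = \A \cap H \cap B_r = \A \cap F \subset F \subset \partial B_r.
\]
Thus $\A'$ satisfies the standing hypothesis $\emptyset \ne \A' \cap B_r \subset \partial B_r$ of \cref{thm:arb}.

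Now I would apply \cref{thm:arb} to $\A'$: it yields $\Phi$, $y$, and $\lambda > 0$ such that the solution set of \eqref{eq:regularization} equals $\A' \cap B_r = \A \cap F$, with $\Ker \Phi = \dir(\A')$. Since $\A' \subset \A$, we immediately get $\Ker \Phi = \dir(\A') \subset \dir(\A)$, which is the relaxed inclusion stated in the proposition. This closes the argument; there is no genuine obstacle, the only subtlety being the bookkeeping identification $\A \cap H \cap B_r = \A \cap F$, which is just the definition of $F$ as $B_r \cap H$. The case $r = 0$ does not arise since $r > 0$ is assumed and exposed faces of $B_r$ for $r>0$ live in $\partial B_r$.
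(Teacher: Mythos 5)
Your proof is correct, but it takes a different route from the paper's. You reduce the statement to \cref{thm:arb} by slicing $\A$ with the supporting hyperplane $H = \{x : \langle D\bar s, x\rangle = r\}$ from \cref{prop:face2sign}: the auxiliary affine subspace $\A' = \A \cap H$ satisfies $\emptyset \ne \A' \cap B_r = \A \cap F \subset \partial B_r$, so \cref{thm:arb} applies verbatim and hands you $\Ker\Phi = \dir(\A') = \dir(\A)\cap (D\bar s)^\perp \subset \dir(\A)$. The paper instead gives a fresh, self-contained construction: it sets $\Phi = (D\bar s\,|\,a_1|\cdots|a_m)^*$ with the $a_i$ a basis of $\dir(\A)^\perp$ and $y = \Phi x + \lambda e_1$ for some $x \in \A\cap F$, then verifies the first-order optimality condition directly with the explicit dual certificate $u = \bar s$ (which works because $D\bar s = \Phi^* e_1$ lies in $\Im\Phi^*$ by construction), and concludes via \cref{thm:sol-set=poly}. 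The two constructions produce the same kernel, $(D\bar s)^\perp \cap \dir(\A)$. Your reduction is shorter and avoids re-verifying optimality — in particular it sidesteps the appeal to \cref{lem:dirA}~(ii) that \cref{thm:arb} needs to manufacture a certificate, since the hyperplane $H$ does that work for you. What the paper's version buys is an explicit formula for $\Phi$, $y$ and the certificate $u=\bar s$, which it then uses in the worked total-variation example that follows; your argument, routed through the existence statement of \cref{thm:arb}, would leave those data implicit (one would have to unwind the proof of \cref{thm:arb} applied to $\A'$ to recover them).
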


\begin{proof}
	Let $\bar s = \max_{x \in F} \sign(D^* x)$, so that $F = B_r \cap \{ x \in \R^n : \langle D\bar s , x \rangle = r \}$
	by \cref{prop:face2sign}.
	Let $\Phi = \left( D\bar s | a_1| \cdots |a_m \right)^*$ with $a_1, \ldots, a_m$ a basis of $\dir(\A)^\bot$, so that
	$\ker \Phi = (D\bar s)^\bot \cap \dir(\A)$;
	let $\lambda >0$, $y = \Phi x + \lambda e_1$ for some $x \in \A \cap F$, and $X$ be the associated solution set.
	Then $\Phi^* (\Phi x - y) + \lambda Du = 0$ with $u = \bar s$.
	Since $\sign(D^* x)\preceq \bar s$, $u_I =  \sign(D^* x)_I$ and $\|u_J\|_\infty \le 1$ (with $I = \supp(D^*x)$ and $J = \cosupp(D^*x)$).
	Then $x \in X$ and $X = (x + \Ker\Phi) \cap B_r = (x+ \dir(\A) )\cap(x+ (D\bar s)^\bot ) \cap B_r = \A \cap F$.
\end{proof}

We now illustrate \cref{prop:arbitrary_sol} on non-periodic Total Variation on 3 points in order to give an intuition of the geometric construction, see \cref{fig:tv-3d}.
Let $D$ be a forward discrete difference operator on 3 points, i.e.,
\begin{equation*}
	D^* = 
	\begin{pmatrix}
		-1 & 1 & 0 \\
		0 & -1 & 1
	\end{pmatrix} .
\end{equation*}
Consider the facet $F$ (in grey on the figure) determined by the sign $\bar{s} = (-1,1)$ and an (affine) hyperplane $\Aa$ (in red on the figure) with normal vector $(0,1,0)$ and origin $(1,1,1)$.
The intersection (in green on the figure) of $F$ and $\Aa$ is then the segment defined by $x_1^* = (1,1,2)$ and $x_2^* = (2,1,1)$.
The proof of \cref{prop:arbitrary_sol} gives us how to design a setting such that the solution set $X$ is exactly $X = \conv \ens{x_1^*, x_2^*}$:
let $\lambda = 1$, 
\begin{equation*}
	\Phi = 
	\begin{pmatrix}
		1 & -2 & 1 \\
		0 & 1 & 0 \\
	\end{pmatrix}
	\qandq
	y = 
	\begin{pmatrix}
		2 \\
		1
	\end{pmatrix} .
\end{equation*}
Checking first-order condition of this setting is tedious, but doable, and leads to $X = \conv \ens{x_1^*, x_2^*}$.

\begin{figure}[htp]
 \centering
 \tdplotsetmaincoords{60}{100}
 \begin{tikzpicture}[scale=2,tdplot_main_coords]
   \draw[dotted,->] (0,0,0) -- (1,0,0) node[anchor=north east]{$e_1$};
   \draw[dotted,->] (0,0,0) -- (0,1,0) node[anchor=north west]{$e_2$};
   \draw[dotted,->] (0,0,0) -- (0,0,1) node[anchor=south]{$e_3$};


   \coordinate (A1) at (-2,-2,-1);
   \coordinate (A2) at (3,3,4);
   \coordinate (B1) at (-2,-3,-3);
   \coordinate (B2) at (3,2,2);
   \coordinate (C1) at (-2,-2,-3);
   \coordinate (C2) at (3,3,2);
   \coordinate (D1) at (-2,-1,-1); 
   \coordinate (D2) at (3,4,4); 
   
   \draw[] (A1) -- (A2);
   \draw[] (B1) -- (B2);
   \draw[] (C1) -- (C2);
   \draw[] (D1) -- (D2);

   \draw[fill=lightgray,opacity=0.3] (A1) -- (B1) -- (B2) -- (A2) node[opacity=1.0,right] {$F$} -- cycle;

   \draw (1,0,0) -- (0,0,1) -- (-1,0,0) -- (0,0,-1) -- cycle;

   \draw[fill=red,opacity=0.5] (-1.5,1,-0.5)--(-1.5,1,2.5)--(1.5,1,2.5)--(1.5,1,-0.5) node[opacity=1.0,right] {$\Aa$} --cycle;

   \draw[green,thick] (1.5,1,0.75)--(-1.5,1,.75) node[black,right,midway] {$X = \Aa \cap F$};
   \node[below] at (1.5,1,0.75) {$x_1^*$};
   \node[above] at (-1.5,1,0.75) {$x_2^*$};
 \end{tikzpicture}
 \caption{Construction of a solution set for total variation regularization on 3 points and the associated $\Phi$ and $y$.}
 \label{fig:tv-3d}
\end{figure}

\subsection{Illustration of the main results for 1D Total Variation}
\begin{figure}[t]
	\centering
	\includegraphics[width=\textwidth]{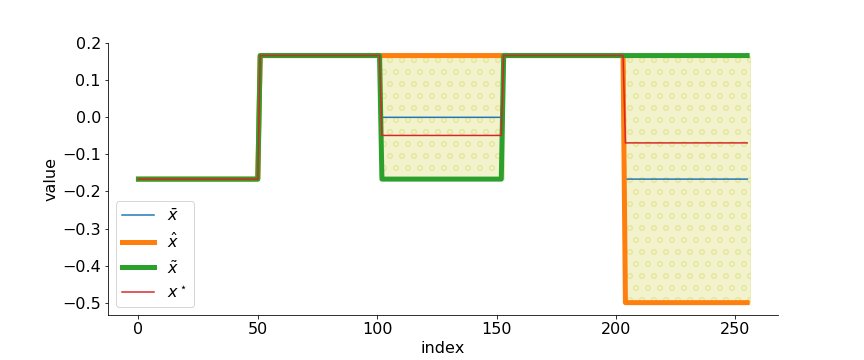}
	\caption{The solution set $X$ represented as the convex hull of $\tilde{x}$ and $\hat{x}$. (Blue) proposed solution $\hat{x}$. (Orange) candidate extreme point $\hat x$. (Green) computed extreme solution. (Red) solution with Chambolle Pock initialized with a random vector. (Yellow) representation of the convex hull.}
	\label{fig:tv1dsol}
\end{figure}
We now provide a full illustration of our results in higher dimension for the popular regularization that is 1D Total Variation.
Note that $D^*$ is a matrix of rank $n-1$ whose nullspace is formed by constant vectors $\Ker D^* = \RR e$ where $e = \begin{pmatrix} 1 & \dots & 1 \end{pmatrix}^*$.

Let $2 \leq t_1 < t_2 < t_3 < t_4 \leq n-1$ and consider the reference signal $\bar{x} \in \RR^n$ and its associated sign
$\bar{s} = \sign( D^* \bar{x}) \in \{-1, 0,+1\}^{n-1}$ defined by
\begin{equation*}
	\bar{x}_v =
	\frac{1}{6} \times 
	\begin{cases}
		-1 & \text{if } 1 \leq v \leq t_1 \\
		\phantom{-}1  & \text{if } t_1 < v \leq t_2 \\
		\phantom{-}0  & \text{if } t_2 < v \leq t_3 \\
		\phantom{-}1  & \text{if } t_3 < v \leq t_4 \\
		-1 & \text{if } t_4 < v \leq n
	\end{cases}
	\qandq
	\bar{s}_e =
	\begin{cases}
		\phantom{-}1 & \text{if } e = t_1 \text{ or } e = t_3 \\
		-1           & \text{if } e = t_2 \text{ or } e = t_4 \\
		\phantom{-}0 & \text{otherwise.}
	\end{cases}	
\end{equation*}
Our objective is to build a problem (i.e, find $\Phi$, $\lambda$ and $y$) such that:
\begin{enumerate}
	\item $\bar{x}$ is a maximal solution (i.e, $\bar{x}$ lives in the relative interior of X).
	\item Every solutions share a common jump at $t_1$.
	\item The affine hull of $X$ is of dimension 1.
\end{enumerate}

The first step is to define a candidate extreme point of $X$ which should be compatible with the sign $\bar{s}$ of $\bar{x}$ according to \cref{cor:sign_map}.
Such vector $\hat{x}$ and sign $\hat{s}$ can be chosen as
\begin{equation*}
	\hat{x}_v =
	\frac{1}{6} \times 
	\begin{cases}
		-1 & \text{if } 1 \leq v \leq t_1 \\
		\phantom{-}1  & \text{if } t_1 < v \leq t_2 \\
		\phantom{-}1  & \text{if } t_2 < v \leq t_3 \\
		\phantom{-}1  & \text{if } t_3 < v \leq t_4 \\
		-3 & \text{if } t_4 < v \leq n.
	\end{cases}
	\qandq
	\hat{s}_e =
	\begin{cases}
		\phantom{-}1 & \text{if } e = t_1 \\
		-1           & \text{if } e = t_4 \\
		\phantom{-}0 & \text{otherwise.}
	\end{cases}	
\end{equation*}
We are now following the proof of \cref{prop:arbitrary_sol} to construct our sparse analysis problem.
We consider the direction $d = z - x$ and build a basis $a_1, \ldots, a_{n-1}$ of $d^\bot$.
This can be done either by hand, or using a SVD decomposition.
We then consider an arbitrary $\lambda > 0$, $\Phi = \left( D\bar s | a_1| \cdots |a_m \right)^*$ and $y = \Phi \bar{x} + \lambda e_1$.
By construction, $\bar{x}$ and $\hat{x}$ are solutions of~\eqref{eq:regularization}, and $\bar{x}$ is a maximal solution.
Since $\Phi$ has rank $n-1$, the affine hull solution set $X$ as at most dimension 1 according to~\cref{prop:cons-sol-faces}.
It is indeed its dimension using a SVD decomposition of $\begin{pmatrix}\Phi \\ D_{\bar J}^*\end{pmatrix}$ where $\bar J = \supp(D^* \bar x)$.
To fully describe it, we have to find its two extreme points.

In order to do it, we are going to use~\cref{cor:lin-rep-sol}.
Let $\bar J = \cosupp(\bar s)$, $\bar I = \bar J^{c}$,
\begin{equation*}
	A_{\text{eq}} =
	\begin{pmatrix}
		\Phi \\
		D^*_{\bar J}
	\end{pmatrix},\quad
	b_{\text{eq}} =
	\begin{pmatrix}
		\Phi \bar{x} \\
		0
	\end{pmatrix}
	\qandq
	A_{\text{ineq}} = - \diag(\bar s_{\bar I})D^*_{\bar I}.
\end{equation*}
Consider now the linear programs for $1 \leq i \leq n-1$:
\begin{equation}\tag{mLP\textsubscript{$i$}}\label{eq:minLP}
	\uargmin{x \in \RR^n} \dotp{x}{d_i} \qsubjq
		A_{\text{eq}} x = b_{\text{eq}} \text{ and }
		A_{\text{ineq}} x \leq 0,
\end{equation}
and
\begin{equation}\tag{MLP\textsubscript{$i$}}\label{eq:maxLP}
	\uargmax{x \in \RR^n} \dotp{x}{d_i} \qsubjq
		A_{\text{eq}} x = b_{\text{eq}} \text{ and }
		A_{\text{ineq}} x \leq 0.
\end{equation}
For a given $1 \leq i \leq n-1$, three cases may occurs:
\begin{enumerate}
	\item[1.] The value of a minimizer of~\eqref{eq:minLP} and a maximizer~\eqref{eq:maxLP} is 0. It means that \emph{every} solution $x \in X$ is zero at the index $i$: for every $x \in X$, $\dotp{x}{d_i} = 0$.
	\item[2a.] The value of a minimizer $m$ of~\eqref{eq:minLP} is negative $m < 0$ and a maximizer~\eqref{eq:maxLP} is 0. It means that \emph{some} solution $x \in X$ is zero at the index $i$, i.e., there exists $x \in X$, $\dotp{x}{d_i} = 0$. We may call the index $i$ a dispensable index to be consistent with the work~\cite{tibshirani2013}.
	\item[2b.] A symmetric situation is when the value of a minimizer of~\eqref{eq:minLP} is 0 and a the value $M$ of a maximizer~\eqref{eq:maxLP} is positive. It means also that \emph{some} solution $x \in X$ is zero at the index $i$.
	\item[3a.] The value of a minimizer $m$ of~\eqref{eq:minLP} is negative $m < 0$ and the value $M$ of a maximizer~\eqref{eq:maxLP} is also negative $M <0$. It means that \emph{no} solution $x \in X$ is sparse at the index $i$, i.e., for every $x \in X$, $m \leq \dotp{x}{d_i} \leq M \neq 0$. The index is called indispensable.
	\item[3b.] The value of a minimizer $m$ of~\eqref{eq:minLP} is positive $m > 0$ and the value $M$ of a maximizer~\eqref{eq:maxLP} is also positive $M > 0$. Then $i$ is indispensable.
\end{enumerate}
Note that according to~\cref{cor:sign_map}, the signs of every solution must be consistent, hence it is impossible to have the situation where the value of a minimizer $m$ of~\eqref{eq:minLP} is strictly negative $m < 0$ and the value $M$ of a maximizer~\eqref{eq:maxLP} is strictly negative $M > 0$.

Beyond the value of~\eqref{eq:minLP} and~\eqref{eq:maxLP}, the actual solution of the linear program is itself a solution of~\eqref{eq:regularization}.
Thus, to find a second candidate to be the extreme point of $X$, it is sufficient to run~\eqref{eq:minLP} and~\eqref{eq:maxLP} for each $1 \leq i \leq n-1$, and consider their nonzero value solutions.
Doing so (using for instance \texttt{scipy.optimize.linprog}~\cite{2020SciPy-NMeth}) let us consider
\begin{equation*}
	\tilde{x}_v =
	\frac{1}{6} \times 
	\begin{cases}
		-1 & \text{if } 1 \leq v \leq t_1 \\
		\phantom{-}1  & \text{if } t_1 < v \leq t_2 \\
		-1  & \text{if } t_2 < v \leq t_3 \\
		\phantom{-}1  & \text{if } t_3 < v \leq t_4 \\
		\phantom{-}1 & \text{if } t_4 < v \leq n
	\end{cases}
	\qandq
	\tilde{s}_e =
	\begin{cases}
		\phantom{-}1 & \text{if } e = t_1 \text{ or } e = t_3 \\
		-1           & \text{if } e = t_2 \text{ or } \\
		\phantom{-}0 & \text{otherwise.}
	\end{cases}	
\end{equation*}
Now, using~\cref{prop:conn-litt}, it is sufficient to check if $\Ker \Phi \cap \Ker D_{J}^*$ intersect trivially for $J = \hat J = \supp(D^* \hat x)$ and $J = \tilde J = \supp(D^* \tilde x)$, which can be done by hand or using again a SVD decomposition of $\begin{pmatrix}\Phi \\ D_J^*\end{pmatrix}$.
This lead to $X = \conv \{ \hat{x}, \tilde{x} \} \ni \bar{x}$.

We illustrate in \cref{fig:tv1dsol} this construction when $n = 256$ and where the solution are obtained by Chambolle-Pock algorithm~\cite{chambolle2011first}, the SVD and linear programming are computed with \texttt{scipy}~\cite{2020SciPy-NMeth}.

\subsection{Illustration on a real dataset} \label{sec:real_dataset}

We now consider the Lasso case where $D = \Id$.
We consider the dataset \texttt{gisette} that is available as a \texttt{libsvm} dataset\footnote{or at the following url: \texttt{\url{https://www.csie.ntu.edu.tw/~cjlin/libsvmtools/datasets/binary/gisette_scale.bz2}}.}.
The \texttt{gisette} dataset was introduced in the NeurIPS 2003 feature selection challenge~\cite{guyon2004result}.
This dataset has $q=5000$ samples and $n=6000$ features.
Its main characteristic is that most entries are $\pm 1$ leading to a rank deficiency.
Figure~\ref{fig:rk_def} shows the rank deficiency of a Lasso problem solved for various proportions $\alpha$ of $\lambda_{\text{max}} = \| \Phi^* y \|_{\infty}$.
This figure is generated with a FISTA solver~\cite{beck2009fast} for $2 \cdot 10^{4}$ iterations in order to have a high accuracy solution (and starting from 0)\footnote{The source code of this experiment is available as a gist:~\texttt{\url{https://gist.github.com/svaiter/e44ee3042a116580aaf33ca48bb4535b}}}.
Such behaviour is not observed for generic datasets (\emph{e.g}. it is not occurring for \texttt{20news} for instance).
\begin{figure}[t]
  \centering
  \includegraphics[width=\textwidth]{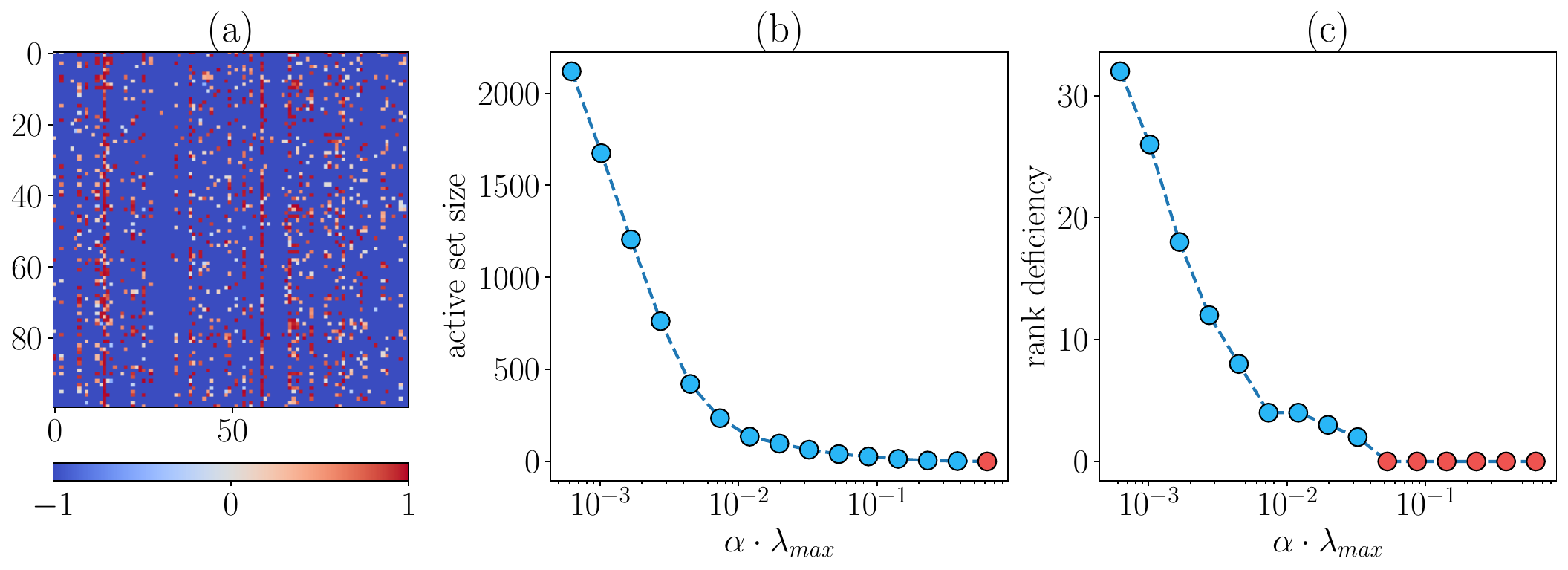}
  \caption{
    Lasso regularization on the \texttt{gisette} dataset for various values of $\lambda$.
    (a) First 100 lines (total 6000) and columns (total 5000) of the matrix $\Phi$ in the dataset. As observed on this sample, around 12\% of the entries are positive.
    (b) Size of the support $I$ with respect to regularization parameter.
    (c) Difference between the size of the support $I$ and the rank of $\Phi_{I}$.
    Red points in (a) and (b) correspond to the value~0.}
  \label{fig:rk_def}
\end{figure}

Solving the Lasso problem for $\lambda = \frac{1}{50} \lambda_{\text{max}}$ leads to a solution $x^{\star}$ having an active set of size 134, but such that $\Phi_{I}$ has rank 130 (where $I = \text{supp}(x^{\star})$).
It is then possible to construct an extreme point using the procedure described in the Appendix A.3 in~\cite{vaiter2013local}.
For the sake of clarity, we recall this ``$H_{J}$-procedure'': if the support $I$ of $x^{\star}$ is such that $\Phi_{I}$ has not full rank,
\begin{enumerate}
\item Take $h \in \Ker \Phi_{I}$ ;
\item Consider the vectors $x_{t} = x^{\star} + t h$, for $t > 0$. There exists a supremum $t_{0}$ such that $x_{t}$ is a solution, and one can prove that $I_{0} = \supp(x_{t_{0}}) \subset I$ .
\end{enumerate}
Iterating this procedure leads to solution with full-rank since the size of the support decreases at least by one at each iteration.


\section{Conclusion}

In this work, we have refined the analysis of the solution set of sparse $\ell^1$ analysis regularization to understand its geometry.
To perform this analysis, we have drawn an explicit relationship between the structure of the unit ball of the regularizer and the set of feasible signs.
Upon this work, we derived a necessary and sufficient condition for a convex set to be the solution of sparse analysis regularization problem.
Extension of our results to non-convex sparse analysis penalizations such as $\| \cdot \|_{p}$ with $0 < p < 1$ is an interesting research direction, where face decomposition of the polytope unit-ball needs to be replaced with stratification of semi-algebraic sets.

From a practical point of view, this work adds another argument towards the need for a good choice of regularizer/dictionary when a user seeks a robust and unique solution to its optimization problem.
This work is mainly of theoretical interest since numerical applications should deal with exponential algorithms with respect to the signal dimension. Note however that in the case of the expected sparsity level of the maximal solution is logarithmic in the dimension, the enumeration problem is in this case tractable.
We believe that the results contained in this paper will help other theoretical works around sparse analysis regularization, such as performing sensitivity analysis of~\eqref{eq:regularization} with respect to the dictionary used in the regularization.

\section*{Acknowledgements}
The authors thank P. Tardivel for pointing out dual geometrical conditions for uniqueness,
and M. Massias for suggesting to use the \texttt{gisette} dataset to illustrate our results.
We also thank the anonymous referees for their valuable comments.

\bibliographystyle{siamplain}
\bibliography{biblio}

\end{document}